\newtheorem{theorem}{Theorem}[chapter]
\newtheorem{proposition}{Proposition}[chapter]
\newtheorem{assumption}{Assumption}[chapter]
\newtheorem{definition}{Definition}[chapter]
\newtheorem{remark}{Remark}[chapter]
\newcolumntype{C}{>{\centering\arraybackslash}X}
\newcolumntype{L}{>{\raggedright\arraybackslash}X}
\definecolor{linkColor}{RGB}{0, 0, 9}
\newcommand{\citep}{\parencite}
\newcommand{\citet}{\textcite}
\renewenvironment{abstract}
{\centerline{\large\textbf{Abstract}}\vspace{0.7ex}%
  \bgroup\leftskip 20pt\rightskip 20pt\small\noindent\ignorespaces}%
{\par\egroup\vskip 0.25ex}
\def\R{\mathds R}
\newcommand{\prob}{\mathds{P}}
\newcommand{\A}{\mathcal{A}}
\newcommand{\X}{\mathcal{X}}
\newcommand{\expert}{\pi_{\textup{E}}}
\newcommand{\primal}[1]{\color{blue}\textup{P}_{#1}}
\newcommand{\dual}[1]{\color{blue}\textup{D}_{#1}}
\newcommand{\drv}{\ensuremath{\mathrm{d}}}
\newcommand{\MDP}[1]{\color{blue}{\textup{MDP}}_{#1}}
\newcommand{\cost}{\mathbf{c}}
\newcommand{\ar}{\mathds{R}}
\newcommand{\inner}[3]{\left\langle #1,#2\right\rangle_{#3}}
\newcommand{\norma}[2]{\left\lVert #1 \right\rVert_{#2}}
\newcommand{\athroisma}[4]{\sum_{#1=#2}^{#3}#4}
\newcommand{\oloklirosi}[4]{\int_{#1}^{#2}#3\,\,\drv#4}
\newcommand{\ocop}{\mu_{\nu_0}^{\pi_{\textup{E}}}}
\newcommand{\aaa}{\mathcal{A}}
\newcommand{\xxx}{\mathcal{X}}
\newcommand{\eps}{\epsilon}
\newcommand{\limit}[3]{\lim_{#1\rightarrow#2}#3}
\newcommand\blfootnote[1]{%
  \begingroup
  \renewcommand\thefootnote{}\footnote{#1}%
  \addtocounter{footnote}{-1}%
  \endgroup
}
\let\originalsection\section
\let\originalchapter\chapter
\renewcommand{\subsection}{\originalsection} 
\renewcommand{\section}{\originalchapter}
\title{\textbf{Randomized algorithms and PAC bounds for inverse\\ reinforcement learning in continuous spaces}}
\author[1]{Angeliki Kamoutsi}
\author[2]{Peter Schmitt-Förster}
\author[2]{Tobias Sutter}
\author[1]{Volkan Cevher}
\author[3]{John Lygeros}
\affil[1]{\footnotesize EPFL, Switzerland \texttt{\{angeliki.kamoutsi, volkan.cevher\}@epfl.ch}}
\affil[2]{\footnotesize University of Konstanz, Germany \texttt{\{peter.schmitt-foerster, tob.sutter\}@uni-konstanz.de}}
\affil[3]{\footnotesize ETH Zürich, Switzerland \texttt{jlygeros@ethz.ch}}
\date{}
\begin{document}

\maketitle
\blfootnote{This work was supported by the DFG in the Cluster of Excellence EXC 2117 “Centre for the Advanced Study of Collective Behaviour” (Project-ID 390829875).}

\begin{abstract}
    This work studies discrete-time discounted Markov decision processes with continuous state and action spaces and addresses the inverse problem of inferring a cost function from observed optimal behavior.
    We first consider the case in which we have  access to the entire expert policy and characterize the set of solutions to the inverse problem by using occupation measures, linear duality, and complementary slackness conditions. 
    To avoid trivial solutions and ill-posedness, we introduce a natural linear normalization constraint. This results in an infinite-dimensional linear feasibility problem, prompting a thorough analysis of its properties. 
    Next, we use linear function approximators and adopt a randomized approach, namely the scenario approach and related probabilistic feasibility guarantees, to derive $\varepsilon$-optimal solutions for the inverse problem. 
    We further discuss the sample complexity for a desired approximation accuracy. 
    Finally, we deal with the more realistic case where we only have access to a finite set of expert demonstrations and a generative model and provide bounds on the error made when working with samples.
\end{abstract}

\section{Introduction}\label{sec:introduction}
In the standard reinforcement learning (RL) setting~\citep{Bertsekas:2005,Sutton:2018,Bertsekas:2021,Meyn:2022}, a cost signal is given to instruct agents on completing a desired task. 
However, oftentimes, it is either too challenging to optimize a given cost (e.g., due to sparsity), or it is prohibitively hard to manually engineer a cost function that induces complex and multi-faceted optimal behaviors. 
At the same time, in many real-world scenarios, encoding preferences using expert demonstrations is easy and provides an intuitive and human-centric interface for behavioral specification~\citep{Pomerleau:1991, Russell:1998, Bagnell:2015}. 
Considering the inverse reinforcement (IRL) problem involves deducing a cost function from observed optimal behavior. IRL is actively researched with applications in engineering, operations research, and biology~\citep{Knox:2021,Osa:2018,Charpentier:2020}.
There are three main motivations behind inverse decision-making. The first one concerns situations where the cost function is of interest by itself, e.g., for scientific inquiry, modeling of human and animal behavior~\cite{Ziebart2009HumanBM,Ng:2000} or modeling of other cooperative or adversarial agents~\cite{Billings1998OpponentMI}. 
The second one concerns the task of imitation or apprenticeship learning~\cite{Abbeel:2004} by first recovering the expert's cost function and then using it to reproduce and synthesize the optimal behavior. 
For instance, in engineering,  IRL can be used to explain and imitate the observed expert behavior, e.g., in the highway driving task~\cite{Abbeel:2004,Syed:2007:GAA:2981562.2981744}, parking lot navigation~\cite{4651222}, and urban navigation~\cite{ziebart2008maximum}. 
Other examples can be found in humanoid robotics and understanding of human locomotion~\cite{laumond2017geometric}. 
The third one concerns optimization problems with uncertainty in the cost function~\cite{inverseconic}. 
Markets typically have transaction costs; therefore, rebalancing to a new policy would result in a loss. 
One can propose an inverse optimization-based strategy, to manage the trade-off between the cost associated with modifying a decision and the higher value from it~\cite{inverseconic}.

Despite extensive research efforts, our understanding of IRL still has significant limitations. One major gap lies in the absence of algorithms designed for continuous state and action spaces, which are crucial for numerous promising applications like autonomous vehicles and robotics that operate in continuous environments. 
Most existing state-of-the-art IRL algorithms for the continuous setting often adopt a policy-matching approach instead of directly solving the IRL problem~\cite{Levine:2010,Ho:2016,Ho:2016b,Fu:2018,Ke:2020,Kostrikov:2020,Barde:2020}. 
However, this approach tends to provide a less robust representation of agent preferences~\cite{Garg:2021}, since the recovered policy is highly dependent on the environment dynamics. 
State-of-the-art IRL algorithms are empirically successful but lack formal guarantees. Theoretical assurances are crucial for practical implementation, especially in safety-critical systems with potential fatal consequences.

\paragraph{Contributions.}
This work deals with discrete-time Markov decision processes (MDPs) on continuous state and action spaces under the total expected discounted cost optimality criterion and studies the inverse problem of inferring a cost function from observed optimal behavior.
We emphasize that we are interested in situations where the cost function is of interest in itself.
Under the assumption that the control model is Lipschitz continuous, we propose an optimization-based framework to infer the cost function of an MDP given a generative model and traces of an optimal policy. 
Our approach is based on the linear programming (LP) approach to continuous MDPs~\cite{Hernandez-Lerma:1996}, complementary slackness optimality conditions, related developments in randomized convex optimization~\cite{ref:Campi-08,ref:Mohajerin-13,ref:Peyman-17} and uniform finite sample bounds from statistical learning theory~\cite{Bousquet2004}.

To this aim, we first consider the case in which we have  access to the entire optimal policy $\pi_{\textup{E}}$ and starting from the LP formulation of the MDP, we characterize the set of solutions to the inverse problem by using occupation measures, linear duality and
complementary slackness conditions. This results in an infinite-dimensional linear feasibility problem. 
Although from a theoretical point of view our approach succeeds in characterizing inverse optimality in its full generality, in practice the following important challenges  need to be addressed. 
First, the inverse problem is ill-conditioned and ill-posed, since  each task is consistent with many cost functions. 
Thus a main challenge is coming up with a meaningful one.

To this end, we enforce an additional natural linear normalization constraint in order to avoid trivial solutions and ill-posedness. 
Another challenge is the infinite-dimensionality of the LP formulation, which makes it computationally intractable. 
To alleviate this difficulty, we propose an approximation scheme that involves a restriction of the decision variables from an infinite-dimensional function space to a finite dimensional subspace (tightening), followed by the approximation of the infinitely-uncountably-many constraints by a finite subset (relaxation). 
In particular, we use linear function approximators and adopt a randomized approach, namely the scenario approach~\cite{ref:Campi-08,ref:Calafiore-05}, and related probabilistic feasibility guarantees~\cite{ref:Mohajerin-13}, to derive $\varepsilon$-optimal solutions for the inverse problem, as well as explicit sample complexity bounds for a desired approximation accuracy. 
Finally, we deal with the more realistic case where we only have access to a finite set of expert demonstrations and a generative model and provide bounds on the error made when working with samples.

\paragraph{Related literature.}
Our principal aim of is to address problems with uncountably infinitely many states and actions.
Existing IRL algorithms treat the unknown cost function as a linear combination~\cite{Ng00algorithmsfor,Abbeel,Syed:2007:GAA:2981562.2981744,ziebart2008maximum,Ratliff:2006:MMP:1143844.1143936,Neu2007ApprenticeshipLU} or nonlinear function~\cite{NIPS2010_3918,NIPS2011_4420,Ratliff-2007-17031} of features. 
In particular, there are three broad categories of formulations. 
In \emph{feature expectation matching}~\cite{Abbeel,Syed:2007:GAA:2981562.2981744,ziebart2008maximum} one attempts to match the feature expectation of a policy to the expert, while in \emph{maximum margin planning}~\cite{Ratliff:2006:MMP:1143844.1143936,NIPS2010_3918,Ratliff-2007-17031} the goal is to learn mappings from features to cost functions so that the demonstrated policy is better than any other policy by a margin defined by a loss-function. 
Moreover, a \emph{probabilistic approach} is to interpret the cost function as parametrization of a policy class such that the true cost function maximizes the likelihood of observing the demonstrations~\cite{ziebart2008maximum,Neu2007ApprenticeshipLU,NIPS2011_4420,Ramachandran:2007:BIR:1625275.1625692}. 
Most existing IRL methods that  recover a cost function, are either designed exclusively for MDPs with finite state and action spaces, or  rely on an oracle access to an RL solver which is used repeatedly in the inner loop of an iterative procedure. 
An exception are the works~\cite{conf/icml/KrishnamurthyT10,LevineKoltun:ICML2012,Finn:2016,Fu:2018,Barde:2020,Reddy:2020,Garg:2021}, that perform well in the experimental settings considered, without providing theoretical guarantees. 
Relying on oracle access to an RL solver is a significant computational burden for applying these methods to MDPs with continuous state and action spaces since solving a continuous MDP is a challenging and computationally expensive problem on its own.
As a result, IRL over uncountable spaces remains largely unexplored. 
In this work we aim to contribute to this line of research and propose a method that avoids repeatedly solving the forward problem and simultaneously provides probabilistic performance guarantees on the quality of the recovered solution.

Linear duality and complementarity were first proposed in~\cite{inverselinear} for solving finite-dimensional inverse LPs. 
The idea was then extended to inverse conic optimization problems in~\cite{inverseconic} by using KKT optimality conditions.  
The fundamental difference between these works and the present paper is that they deal with finite-dimensional convex optimization programs where the agent has complete knowledge of the optimal behavior as a finite-dimensional vector. 
In our setting we have the additional difficulty of the infinite-dimensional and data-driven nature of the problem.
In~\cite{inverseopt} the authors use occupation measures and complementarity in linear programming to formulate the inverse deterministic continuous-time optimal control problem. Under the assumption of polynomial dynamics and semi-algebraic state and input constraints, they propose an approximation scheme based on sum-of-squares semidefinite programming. 
Contrary to~\cite{inverseopt}, we consider the problem of inverse discrete-time stochastic optimal control. 
In such a stochastic environment, assuming polynomial dynamics clearly is restrictive, excluding any setting with Gaussian noise, e.g., the LQG problem. 
Our approach is not limited to the case of polynomial dynamics and semi-algebraic constraints but is able to tackle the general case, while also providing performance guarantees as in~\cite{inverseopt}.

Our work is closely related to the recent theoretical works on IRL~\cite{Komanduru:2019,Komanduru:2021,Dexter:2021,Metelli:2021,Linder:2022,Metelli:2023}. However, these papers consider either tabular MDPs ~\cite{Komanduru:2019,Komanduru:2021,Metelli:2021,Linder:2022,Metelli:2023} or MDPs with continuous states and finite action spaces~\cite{Dexter:2021}
In contrast, our contribution delves into the theoretical analysis 
of IRL in the intricate landscape of continuous state and action spaces.  Notably, our framework, when applied to finite tabular MDPs and 
a stationary Markov expert policy $\pi_{\textup{E}}$, simplifies to the inverse feasibility set considered in ~\cite{Metelli:2021,Linder:2022,Metelli:2023} (see also Appx~\ref{app:recover}). The methodology put forth in those studies, extends the LP formulation previously
explored in~\cite{Ng:2000,Komanduru:2019,Komanduru:2021,Dexter:2021}, which primarily dealt with deterministic expert policies of the form $\pi_{\textup{E}}\equiv a_1$. In our work, by using occupancy measures instead of policies and employing Lagrangian duality, we are able to characterize inverse feasibility for general
continuous MDPs regardless of the complexity of the expert policy. Moreover, our framework empowers us to leverage offline expert demonstrations to compute an approximate feasibility set and recover a
cost through a sample-based convex program. This flexibility surpasses previous theoretical IRL settings, where either $\pi_{\textup{E}}$ is assumed to be fully known~\cite{Komanduru:2019,Dexter:2021,Metelli:2021} or active querying
of $\pi_{\textup{E}}$ is possible for each state~\cite{Linder:2022,Metelli:2023}. 
Finally, our assumption of a Lipschitz MDP model is milder and more general than the infinite matrix representation considered in~\cite{Dexter:2021}, thus accommodating a broader range of MDP models.
Overall, we establish a link between our methodology and 
the existing body of literature on LP formulations for IRL, while also accounting for continuous states
and action spaces and more general expert policies. Finally, we would like to highlight a key distinction between our work and recent theoretical IRL papers~\cite{Metelli:2021,Linder:2022,Metelli:2023}. Unlike these recent works, our study goes beyond the examination of the properties of the inverse feasibility set and its estimated variant. Our contribution extends to tackling the reward ambiguity problem, a well-known limitation of the IRL paradigm, and provides theoretical results in this direction. Additionally, our work introduces function approximation techniques that come with robust theoretical guarantees. Finally, we study how constraint sampling in infinite-dimensional LPs can be exploited to derive a single nearly optimal solution with probabilistic performance guarantees.

\paragraph{Basic definitions and notations.}
Let $(X,\rho)$ be a \emph{Borel space}, i.e., $X$ is a Borel subset of a complete and separable $\rho$-metric space, and let $\mathcal{B}(X)$ be its Borel $\sigma$-algebra. 
We denote by $\mathcal{M}(X)$ the Banach space of finite signed Borel measures on $X$ equipped with the total variation norm and by $\mathcal{P}(X)$ the convex set of Borel probability measures. 
Let $\delta_x\in\mathcal{P}(X)$ be the Dirac measure centered at $x\in X$.  Measurability is always understood in the sense of Borel measurability. 
An open ball in $(X,\rho)$ with radius $r$ and center $x_0$ is denoted by $B_r(x_0)=\{x\in X:\rho(x,x_0)<r\}$. 
Given a measurable function $u:X\rightarrow \mathbb{R}$, its sup-norm is given by $\lVert u\rVert_\infty \triangleq \sup_{x \in X}|u(x)|$. 
Moreover, we define the Lipschitz semi-norm by $|u|_{\textup{L}}\triangleq\sup_{x\ne x'} \left\{\frac{|u(x) - u(x')|}{ \rho(x,x')}\right\}$ and the Lipschitz norm by $\lVert u\rVert_{\textup{L}}\triangleq\lVert u\rVert_\infty+|u|_{\textup{L}}$. 
Let $\operatorname{Lip}(X)$ be the Banach space of real-valued bounded Lipschitz continuous functions on $X$ together with the Lipschitz norm $\lVert\cdot\rVert_{\textup{L}}$. 
Then, $(\mathcal{M}(X),\operatorname{Lip}(X))$ forms a \emph{dual pair} of vector spaces with duality brackets $\inner{\mu}{u} \ \! \!\triangleq\int_{\xxx}u(x){\mathsf{d}\mu}$, for all $\mu\in\mathcal{M}(X)$, $u\in\operatorname{Lip}(X)$. 
Moreover, if $\mathcal{M}(X)_+$ is the convex cone of finite nonnegative Borel measures on $X$, then its dual convex cone is the set $\operatorname{Lip}(X)_+$ of nonnegative bounded and Lipschitz continuous functions on $X$. 
Under the additional assumption that $X$ is compact, the \emph{Wasserstein norm} $\norma{\cdot}{\textup{W}}$ on $\mathcal{M}(X)$ is dual to the Lipschitz norm, i.e., $\norma{\mu}{\textup{W}}\triangleq\sup_{\norma{u}{\textup{L}}\le 1}\inner{\mu}{u}{}$. 
If $X,Y$ are Borel spaces, a \emph{stochastic kernel} on $X$ given $Y$ is a function $P(\cdot|\cdot):\mathcal{B}(X)\times Y\rightarrow [0,1]$ such that $P(\cdot|y)\in\mathcal{P}(X)$, for each fixed $y\in Y$, and $P(B|\cdot)$ is a measurable real-valued function on $Y$, for each fixed $B\in\mathcal{B}(X)$.

\section{Markov decision processes and linear programming formulation}\label{sec:ProbStatement}
\paragraph{Continuous Markov decision process.}
Consider a \emph{Markov decision process} (MDP) given by a tuple $\mathcal{M}_c\triangleq\big( \xxx,\aaa,\mathsf{P},\gamma,\nu_0,c \big)$, where $\xxx$ is a Borel space called the \emph{state space}, $\aaa$ is a Borel space called the \emph{action space}, $\mathsf{P}$ is a stochastic kernel on $\xxx$ given $\xxx\times \aaa$ called the \emph{transition law}, $\gamma\in (0,1)$ is the \emph{discount factor}, $\nu_0\in\mathcal{P}(\xxx)$ is the \emph{initial probability distribution}, and $c:\xxx\times\aaa \to \ar$ is the \emph{cost function}. 
The model $\mathcal{M}_c$ represents a controlled discrete-time stochastic system with initial state $x_0\sim\nu_0(\cdot)$. 
At time step $t$, if the system is in state $x_t=x\in\xxx$, and  the action $a_t=a\in\aaa$ is taken, then a corresponding cost $c(x,a)$ is incurred, and the system moves to the next state $x_{t+1}\sim\mathsf{P}(\cdot|x,a)$.
Once transition into the new state has occurred, a new action is chosen, and the process is repeated.

A \emph{stationary Markov policy} $\pi$ is a stochastic kernel on $\aaa$ given $\xxx$ and $\pi(\cdot|x)\in\mathcal{P}(\aaa)$ denotes the probability distribution of the action $a_t$ taken at time $t$, while being in state $x$. 
We denote the space of stationary Markov policies by $\Pi_0$. 
Given a policy $\pi$, we denote by $\mathds{P}_{\nu_0}^{\pi}$ the induced probability measure\footnote{Note that $\mathds{P}_{\nu_0}^{\pi}$ is uniquely determined by the transition law $\mathsf{P}$, the initial state distribution $\nu_0$ and the policy $\pi$~\cite[Prop.~7.28]{Bertsekas:1978}.} on the canonical sample space $\Omega\triangleq(\xxx\times\aaa)^\infty$, i.e., $\mathds{P}_{\nu_0}^{\pi}[\cdot]=\textup{Prob}[\cdot\mid\pi,x_0\sim\nu_0]$ is the probability of an event when following $\pi$ starting from $x_0\sim\nu_0$. 
The expectation operator with respect to the trajectories generated by $\pi$ when $x_0\sim\nu_0$, is denoted by $\mathds{E}^{\pi}_{\nu_0}$. 
If $\nu_0=\delta_{x}$ for some $x\in\xxx$, then we will write for brevity $\mathds{P}^{\pi}_{x}$ and $\mathds{E}^{\pi}_{x}$. 

The optimal control problem we are interested in is~\footnote{For the discounted policy optimization problem considered in this paper it suffices to restrict our search to stationary Markov policies, 
~\cite[Thm. 5.5.3]{Puterman:1994}. 
However, the expert policy $\pi_{\textup{E}}$ can be nonstationary and history-dependent.}
\begin{equation}\label{MDP}
    V^\star_c(\nu_0)\triangleq\min_{\pi\in\Pi_0}V_c^\pi(\nu_0), \tag{$\MDP{\cost}$}
\end{equation}
where $V_c^\pi(\nu_0)\triangleq\mathds{E}_{\nu_0}^{\pi}\left[\athroisma{t}{0}{\infty}{\gamma^tc(x_t,a_t)}\right]$.
A policy $\pi^\star$ is called \emph{$\gamma$-discounted $\nu_0$-optimal} if $V_c^{\pi^\star}(\nu_0)=V^\star_c(\nu_0)$,
and the \emph{optimal value function} $V^\star_c:\xxx\rightarrow\mathds{R}$ is given by $V^\star_c(x)\triangleq V^\star_c(\delta_x)$.
We impose the following assumptions on the MDP model which hold throughout the article. 
These are the usual continuity-compactness conditions~\cite{HernandezI}, together with the Lipschitz continuity of the elements of the MDP; see, e.g.,~\cite{ref:Dufour-13}. 
We recall that the transition law $\mathsf{P}$ acts on bounded measurable functions $u:\xxx\rightarrow\mathcal{R}$ from the left as $\mathsf{P}u(x,a)\triangleq \int_{\xxx}u(y) \mathsf{P}(\drv y|x,a)$, for all $(x,a)\in \xxx\times \aaa$.
\begin{assumption}[Lipschitz control model]\label{assumption1}   \
    \begin{enumerate}[label=\text{(A\arabic*)}, itemsep = 1mm, topsep = -1mm]
		\item\label{AA1} $\xxx$ and $\aaa$ are compact subsets of Euclidean spaces;
		\item\label{AA3} the transition law $\mathsf{P}$ is \emph{weakly continuous}, meaning that $\mathsf{P}u$ is continuous on $\xxx\times\aaa$, for every continuous function $u:\xxx\rightarrow\mathds{R}$. 
        Moreover, $\mathsf{P}$ is \emph{Lipschitz continuous}, i.e., there exists a constant $L_{\mathsf{P}}>0$ such that for all $(x,a), (y,b)\in \xxx\times\aaa$ and all $u\in\operatorname{Lip}(\xxx)$, it holds that
		$|\mathsf{P}u(x,a) - \mathsf{P}u(y,b)| \le L_{\mathsf{P}} |u|_{\textup{L}}(\norma{x-y}{2}+\norma{a-b}{2});$
		\item\label{AA4} the cost function $c$ is in $\operatorname{Lip}(\xxx\times\aaa)$ with Lipschitz constant $L_c>0$. That is, for all $(x,a), (y,b)\in \xxx\times\aaa$, $|c(x,a)-c(y,b)|\le L_c (\norma{x-y}{2}+\norma{a-b}{2})$;
    \end{enumerate}
\end{assumption}
Note that Assumption\ref{assumption1}~\ref{AA3} is fulfilled when the transition law $\mathsf{P}$ has a density function $f(y, x, a)$ that is Lipschitz continuous in $y$ uniformly in $(x, a)$~\cite{ref:Dufour-13}. 
This encompasses various probability distributions, such as the uniform, Gaussian, exponential, Beta, Gamma, and Laplace distributions, among others. 
Additionally, it applies to the infinite matrix representation considered in\cite{Dexter:2021}. 
Consequently, Assumption \ref{assumption1} accommodates a broad range of MDP models and allows for the consideration of smooth and continuous dynamics that reflect the characteristics of several real-world applications, such as robotics, or autonomous driving.
Importantly, Assumption~\ref{assumption1} ensures that the value function $V^\star_c$ is in $\operatorname{Lip}(\xxx)$ 
and is uniquely characterized by the \emph{Bellman optimality equation} $V^\star_c(x)=\min_{a\in\aaa}\{c(x,a)+\gamma\int_{\xxx}V^\star_c(y)\mathsf{P}(dy|x,a)\}$, for all $x\in\xxx$~\cite[Thm.~3.1]{ref:Dufour-13} and \cite{ref:Lerma:adaptive-01}.
 		
\paragraph{Occupancy measures.}
For every policy $\pi$, we define the \emph{occupancy measure} $\mu_{\nu_0}^{\pi}\in\mathcal{M}(\xxx\times\aaa)_+$ by $\mu_{\nu_0}^{\pi}(E)\triangleq\athroisma{t}{0}{\infty}{\gamma^t\mathds{P}_{\nu_0}^{\pi}}\left[(x_t,a_t)\in E\right]$, $E\in\mathcal{B}(\xxx\times\aaa)$. 
The occupancy measure can be interpreted as the discounted visitation frequency of the set $E$ when acting according to policy $\pi$.
The set of occupancy measures is characterized in terms of linear constraint satisfaction~\cite[Theorem~6.3.7]{ref:Hernandez-96}. 
To this end consider the convex set of measures, $\mathfrak{F}\triangleq\{\mu\in\mathcal{M}(\xxx\times\aaa)_+:\,\,T_{\gamma}\mu=\nu_0\}$, where $T_{\gamma}:\mathcal{M}(\xxx\times\aaa)\rightarrow\mathcal{M}(\xxx)$ is a linear and weakly continuous operator given by
\begin{equation*}
    (T_{\gamma}\mu)(B)\triangleq\mu(B\times\aaa)-\gamma\int_{\xxx\times\aaa}\mathsf{P}(B|x,a)\mu(\,\drv(x,a)),
\end{equation*}
for all $B\in\mathcal{B}(\xxx)$. 
Then, $\mathfrak{F}=\{\mu_{\nu_0}^\pi:\,\,\pi\in\Pi_0\}$.
Moreover, $\inner{\mu_{\nu_0}^{\pi}}{c}{}=V_c^\pi(\nu_0)$, for every $\pi$.
  
\paragraph{The linear programming approach.}
A direct consequence  is that
(\ref{MDP}) can be stated equivalently as an infinite-dimensional LP over measures
\begin{equation}\label{primal}
    \mathcal{J}_{c}(\nu_0)\triangleq\inf_{\mu\in\mathcal{M}(\xxx\times\aaa)_+}\{\inner{\mu}{c}{}:\,\,T_{\gamma}\mu=\nu_0\}.\tag{$\primal{\cost}$}
\end{equation}
In particular the infimum in~(\ref{primal}) is attained and $\pi^\star$ is optimal for (\ref{MDP}) if and only if $\mu_{\nu_0}^{\pi^\star}$ is optimal for the primal LP~(\ref{primal}). 
The dual LP of~(\ref{primal}) is given by
\begin{equation}\label{dual}
    \mathcal{J}_{c}^*(\nu_0)\triangleq\sup_{u\in\operatorname{Lip}(\xxx)}\{\inner{\nu_0}{u}{}:\,\,c-T_{\gamma}^*u\geq 0\,\,\text{on}\,\, \xxx\times\aaa\}, \tag{$\dual{\cost}$}
\end{equation}
where the adjoint linear operator $T_{\gamma}^*:\operatorname{Lip}(\xxx)\rightarrow\operatorname{Lip}(\xxx\times\aaa)$ of $T_{\gamma}$ is given by
\begin{equation*}   
    (T_{\gamma}^*u)(x,a)\triangleq u(x)-\gamma\int_{\xxx}u(y)\mathsf{P}(\drv y|x,a).
\end{equation*}
Under Assumption~\ref{assumption1}, $T_{\gamma}^*$ is well-defined and the dual LP~(\ref{dual}) is solvable, i.e., the supremum is attained, and strong duality holds. 
That is, $\mathcal{J}_{c}(\nu_0)=\mathcal{J}_{c}^*(\nu_0)=V_c^\star(\nu_0)$.
In particular, the value function $V^\star_c$ is an optimal solution for the dual LP~(\ref{dual}).
More details on the LP formulations for MDPs can be found in Appendix~\ref{detailsLP}.

\section{Inverse reinforcement learning and characterization of solutions}\label{sec:Method}
%
\begin{figure}
    \centering
    \includegraphics[scale=0.2]{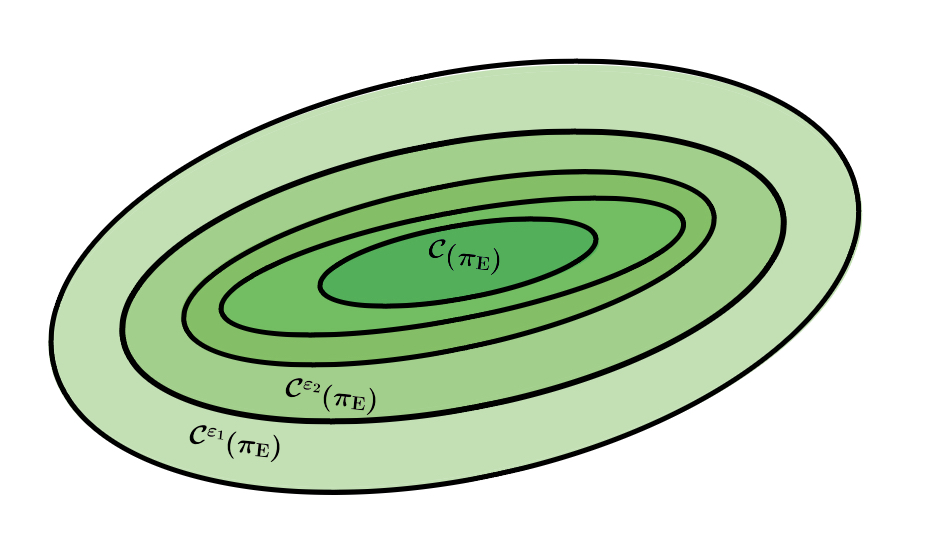}
    \caption{{Illustration of Theorem~\ref{IP} for $\varepsilon_1>\varepsilon_2$}.}
    \label{fig:bellman_inequality}
\end{figure}
 We first define the \emph{inverse reinforcement learning} (IRL) problem and the \emph{inverse feasibility set}.
\begin{definition}[IRL~\cite{Ng:2000,Metelli:2021}]
    An IRL problem is a pair $\mathcal{B}\triangleq(\mathcal{M},\expert)$, where $\mathcal{M}\triangleq(\xxx,\aaa,\mathsf{P},\nu_0, \gamma)$ is an MDP without cost function  and $\pi_{\textup{E}}$ is an observed expert policy. 
    We say that  $c\in\operatorname{Lip}(\xxx\times\aaa)$ is \emph{inverse feasible} for $\mathcal{B}$, if $\pi_{\textup{E}}$ is a $\gamma$-discount $\nu_0$-optimal policy for~(\ref{MDP}) with cost $c$. 
    The set of all $c\in\operatorname{Lip}(\xxx\times\aaa)$ that are inverse feasible  is called the \emph{inverse feasibility set} and is denoted by $\mathcal{C}(\pi_{\textup{E}})$. 
\end{definition}
Next, we use the primal-dual LP approach to MDPs and complementary slackness to characterize $\mathcal{C}(\pi_{\textup{E}})$. 
To this end, we first define the \emph{$\varepsilon$-inverse feasibility set} $\mathcal{C}^{\varepsilon}(\pi_{\textup{E}})$.
\begin{definition}\label{defofinv}
    Let  $\varepsilon\geq 0$. We say that a cost function $c$ is $\varepsilon$-inverse feasible for $\mathcal{B}=(\mathcal{M},\expert)$ and denote $c\in\mathcal{C}^{\varepsilon}(\pi_{\textup{E}})$ if and only if, $c\in\operatorname{Lip}(\xxx\times\aaa)$ and there exists $u\in\operatorname{Lip}(\xxx)$ such that
    \begin{align}\label{Y}
        & \left\{ \begin{array}{ll}
	       \langle\mu_{\nu_0}^{\pi_{\textup{E}}},c-T_{\gamma}^*u\rangle &\le {\varepsilon},    \\
	       c-T_{\gamma}^*u&\ge -{\varepsilon} ,\,\,\mbox{on}\,\,\xxx\times \aaa.
        \end{array} \right.
    \end{align}
\end{definition}
We are now ready to characterize the solutions to IRL, following arguments from~\cite{inverselinear,inverseconic,inverseopt}.
\begin{theorem}[Inverse feasibility set characterization]\label{IP}
     Let $\pi_{\textup{E}}\in\Pi$. Under Assumption~\ref{assumption1} on the Markov decision model $\mathcal{M}_c$, the following assertions are equivalent
    \begin{enumerate}
        \item $c\in \mathcal{C}^{0}(\pi_{\textup{E}})$;
        \item $c\in \bigcap_{\varepsilon>0}\mathcal{C}^{\varepsilon}(\pi_{\textup{E}})$;
        \item $\pi_{\textup{E}}$ is $\gamma$-discount $\nu_0$-optimal for~(\ref{MDP}) with cost function $c$.
    \end{enumerate}
    As a consequence, $\mathcal{C}(\pi_{\textup{E}})=\mathcal{C}^{0}(\pi_{\textup{E}})=\bigcap_{\varepsilon>0}\mathcal{C}^{\varepsilon}(\pi_{\textup{E}})$. 
    Moreover, $\mathcal{C}(\pi_{\textup{E}})$ is a convex cone and $\norma{\cdot}{\textup{L}}$-closed in $\operatorname{Lip}(\xxx\times\aaa)$.
\end{theorem}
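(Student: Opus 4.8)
The plan is to establish the cycle $(1)\Rightarrow(2)\Rightarrow(3)\Rightarrow(1)$ and then read off the two concluding assertions. The step $(1)\Rightarrow(2)$ is immediate: any $u\in\operatorname{Lip}(\xxx)$ certifying $c\in\mathcal{C}^0(\pi_{\textup{E}})$ through \eqref{Y} at $\varepsilon=0$ a fortiori certifies $c\in\mathcal{C}^\varepsilon(\pi_{\textup{E}})$ for every $\varepsilon>0$. For $(3)\Rightarrow(1)$ I would invoke the primal--dual pair of Section~\ref{sec:ProbStatement}: if $\pi_{\textup{E}}$ is $\gamma$-discount $\nu_0$-optimal, then since $T_{\gamma}\mu_{\nu_0}^{\pi_{\textup{E}}}=\nu_0$ and $\inner{\mu_{\nu_0}^{\pi_{\textup{E}}}}{c}{}=V_c^{\pi_{\textup{E}}}(\nu_0)=V_c^\star(\nu_0)=\mathcal{J}_c(\nu_0)$, the occupancy measure $\mu_{\nu_0}^{\pi_{\textup{E}}}$ is feasible and optimal for \eqref{primal}; and $u:=V_c^\star$, which lies in $\operatorname{Lip}(\xxx)$ by Assumption~\ref{assumption1}, is feasible for \eqref{dual} (this being exactly the Bellman inequality $c-T_{\gamma}^*V_c^\star\ge 0$ on $\xxx\times\aaa$) and optimal. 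Adjointness of $T_{\gamma},T_{\gamma}^*$ together with $T_{\gamma}\mu_{\nu_0}^{\pi_{\textup{E}}}=\nu_0$ and strong duality then yield $\inner{\mu_{\nu_0}^{\pi_{\textup{E}}}}{c-T_{\gamma}^*V_c^\star}{}=\mathcal{J}_c(\nu_0)-\mathcal{J}_c^*(\nu_0)=0$, so the pair $(c,V_c^\star)$ satisfies \eqref{Y} with $\varepsilon=0$, i.e., $c\in\mathcal{C}^0(\pi_{\textup{E}})$.

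The crux is $(2)\Rightarrow(3)$, and the obstacle is that the certificate $u_\varepsilon$ supplied by $c\in\mathcal{C}^\varepsilon(\pi_{\textup{E}})$ is only \emph{approximately} feasible for \eqref{dual}, namely $c-T_{\gamma}^*u_\varepsilon\ge -\varepsilon$. I would repair this by a constant shift: since $\nu_0\in\mathcal{P}(\xxx)$ and $T_{\gamma}^*(u-k)=T_{\gamma}^*u-(1-\gamma)k$ for every constant $k$, the function $\tilde u_\varepsilon:=u_\varepsilon-\tfrac{\varepsilon}{1-\gamma}\in\operatorname{Lip}(\xxx)$ satisfies $c-T_{\gamma}^*\tilde u_\varepsilon\ge 0$ on $\xxx\times\aaa$, hence is genuinely feasible for \eqref{dual}. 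Combining the first line of \eqref{Y}, adjointness, $T_{\gamma}\mu_{\nu_0}^{\pi_{\textup{E}}}=\nu_0$, and dual feasibility of $\tilde u_\varepsilon$,
\[
  V_c^{\pi_{\textup{E}}}(\nu_0) = \inner{\mu_{\nu_0}^{\pi_{\textup{E}}}}{c}{}
  \le \inner{\mu_{\nu_0}^{\pi_{\textup{E}}}}{T_{\gamma}^*u_\varepsilon}{} + \varepsilon
  = \inner{\nu_0}{u_\varepsilon}{} + \varepsilon
  = \inner{\nu_0}{\tilde u_\varepsilon}{} + \Bigl(1+\tfrac{1}{1-\gamma}\Bigr)\varepsilon
  \le \mathcal{J}_c^*(\nu_0) + \Bigl(1+\tfrac{1}{1-\gamma}\Bigr)\varepsilon .
\]
Letting $\varepsilon\downarrow 0$ and using strong duality $\mathcal{J}_c^*(\nu_0)=\mathcal{J}_c(\nu_0)=V_c^\star(\nu_0)$ gives $V_c^{\pi_{\textup{E}}}(\nu_0)\le V_c^\star(\nu_0)$, while the reverse inequality is clear because $\mu_{\nu_0}^{\pi_{\textup{E}}}$ is feasible for \eqref{primal}. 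Hence $V_c^{\pi_{\textup{E}}}(\nu_0)=V_c^\star(\nu_0)$, i.e., $\pi_{\textup{E}}$ is $\gamma$-discount $\nu_0$-optimal.

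The concluding assertions then follow. The identity $\mathcal{C}(\pi_{\textup{E}})=\mathcal{C}^0(\pi_{\textup{E}})=\bigcap_{\varepsilon>0}\mathcal{C}^\varepsilon(\pi_{\textup{E}})$ merely restates $(1)\Leftrightarrow(2)\Leftrightarrow(3)$ together with the definition of inverse feasibility. For the convex-cone property I would argue directly on $\mathcal{C}^0(\pi_{\textup{E}})$: if $u_i$ certifies $c_i$ for $i=1,2$ and $\lambda_1,\lambda_2\ge 0$, then $\lambda_1u_1+\lambda_2u_2\in\operatorname{Lip}(\xxx)$ certifies $\lambda_1c_1+\lambda_2c_2$, because $(\lambda_1c_1+\lambda_2c_2)-T_{\gamma}^*(\lambda_1u_1+\lambda_2u_2)=\sum_{i}\lambda_i(c_i-T_{\gamma}^*u_i)\ge 0$ on $\xxx\times\aaa$ and its pairing with the nonnegative measure $\mu_{\nu_0}^{\pi_{\textup{E}}}$ is a nonnegative combination of quantities $\le 0$; equivalently, $\mathcal{C}^0(\pi_{\textup{E}})=\mathcal{K}+\operatorname{Range}(T_{\gamma}^*)$, a sum of the convex cone $\mathcal{K}:=\{d\in\operatorname{Lip}(\xxx\times\aaa):d\ge 0,\ \inner{\mu_{\nu_0}^{\pi_{\textup{E}}}}{d}{}\le 0\}$ and the linear subspace $\operatorname{Range}(T_{\gamma}^*)$. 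Finally, for $\norma{\cdot}{\textup{L}}$-closedness I would use characterization (3): if $c_n\to c$ in $\norma{\cdot}{\textup{L}}$ with $c_n\in\mathcal{C}(\pi_{\textup{E}})$, then, since all $\mu\in\mathfrak{F}$ have total mass $(1-\gamma)^{-1}$, one has $|V_{c_n}^{\pi_{\textup{E}}}(\nu_0)-V_c^{\pi_{\textup{E}}}(\nu_0)|\le(1-\gamma)^{-1}\norma{c_n-c}{\infty}$ and, via the elementary bound $|\inf_{\mu\in\mathfrak{F}}\inner{\mu}{c_n}{}-\inf_{\mu\in\mathfrak{F}}\inner{\mu}{c}{}|\le\sup_{\mu\in\mathfrak{F}}|\inner{\mu}{c_n-c}{}|$, also $|V_{c_n}^\star(\nu_0)-V_c^\star(\nu_0)|\le(1-\gamma)^{-1}\norma{c_n-c}{\infty}$; passing to the limit in the identity $V_{c_n}^{\pi_{\textup{E}}}(\nu_0)=V_{c_n}^\star(\nu_0)$ yields $V_c^{\pi_{\textup{E}}}(\nu_0)=V_c^\star(\nu_0)$, i.e., $c\in\mathcal{C}(\pi_{\textup{E}})$. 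I expect the repair of the approximate dual certificate in $(2)\Rightarrow(3)$ to be the only genuinely non-routine step; everything else is bookkeeping built on the LP duality established earlier.
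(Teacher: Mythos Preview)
Your proof is correct and follows essentially the same approach as the paper: the cycle $(3)\Rightarrow(1)\Rightarrow(2)\Rightarrow(3)$ via LP strong duality, with the identical constant-shift repair $u_\varepsilon\mapsto u_\varepsilon-\tfrac{\varepsilon}{1-\gamma}$ to turn the approximate dual certificate into a genuine one in the step $(2)\Rightarrow(3)$. The only minor deviation is your closedness argument, where you pass to the limit directly in characterization~(3) using the $(1-\gamma)^{-1}$-Lipschitz dependence of $V_c^{\pi_{\textup{E}}}(\nu_0)$ and $V_c^\star(\nu_0)$ on $c$, whereas the paper reuses the just-proved identity $\mathcal{C}(\pi_{\textup{E}})=\bigcap_{\varepsilon>0}\mathcal{C}^\varepsilon(\pi_{\textup{E}})$ and shows $c\in\mathcal{C}^\varepsilon(\pi_{\textup{E}})$ for each $\varepsilon$; both are routine and equally valid.
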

\begin{proof}
    The direction $3)\Rightarrow 1)$ is a consequence of the strong duality Theorem~\ref{strongduality} and complementary slackness. 
    Indeed, assume that $\expert$ is optimal for~(\ref{MDP}) with cost $c$. Then, by Theorem~\ref{HL} $\ocop$ is optimal to~(\ref{primal}). 
    By Theorem~\ref{strongduality}, the dual~(\ref{dual}) is solvable and there is no duality gap. Therefore, there exists a $u\in\operatorname{Lip}(\xxx)$ such that
    \begin{eqnarray*}
        c-T_{\gamma}^*u&\geq& 0,\,\,\,\text{on}\,\,\,\xxx\times\aaa\,\,\,\,\,\,\text{(feasibility)},\\
        \inner{\ocop}{c}{}&=&\inner{\nu_0}{u}{}\,\,\,\,\,\,\,\,\,\,\,\text{(strong duality)}.
    \end{eqnarray*}

    The second equality is equivalent to $\inner{\ocop}{c-T_{\gamma}^*u}{}=0$, since $T_{\gamma}\ocop= \nu_0$. 
    This proves the desired implication. 
    
    The implication $1)\Rightarrow 2)$ is straightforward.
    
    To show $2)\Rightarrow 3)$, let $c\in \bigcap_{\varepsilon>0}\mathcal{C}^{\varepsilon}(\pi_{\textup{E}})$. 
    Then, for each $n\in\mathds{N}$, there exists $u_n\in\operatorname{Lip}(\xxx)$ such that $\inner{\ocop}{c-T_{\gamma}^*u_n}{}\le\frac{1}{n}$ and $c-T_{\gamma}^*u_n\geq -\frac{1}{n}$, on $\xxx\times\aaa$. Set $v_n=u_n-\frac{1}{n(1-\gamma)}$. 
    Then, $\{v_n\}_{n=1}^\infty\subset\operatorname{Lip}(\xxx)$ and
    \begin{eqnarray}
        \limit{n}{\infty}{\inner{\ocop}{c-T_{\gamma}^*v_n}{}}&=& 0,  \label{T1}\\ 
        c-T_{\gamma}^*v_n&\geq& 0,\,\,\,\text{on}\,\,\,\xxx\times\aaa \label{T2},
    \end{eqnarray}
    where we used that  $\inner{\ocop}{1}{}=\frac{1}{1-\gamma}$ and $T_\gamma^*1=1-\gamma$, on $\xxx\times\aaa$.
    Equation~(\ref{T2}) states that $\{v_n\}_{n=1}^\infty$ is feasible for the dual~(\ref{dual}). 
    Moreover, $\ocop$ is feasible for~(\ref{primal}). 
    Therefore,
    \begin{equation}\label{T3}
        \inner{\nu_0}{v_n}{}\le \mathcal{J}^*_{c}(\nu_0)= \mathcal{J}_{c}(\nu_0)\le\inner{\ocop}{c}{}.
    \end{equation}
    By~(\ref{T1}) $\limit{n}{\infty}{\inner{\nu_0}{v_n}{}}=\limit{n}{\infty}{\inner{T_\gamma\ocop}{v_n}{}}=\limit{n}{\infty}{\inner{\ocop}{T_\gamma^*v_n}{}}=\inner{\ocop}{c}{}$. 
    So, by taking the limits in~(\ref{T3}) as $n\rightarrow\infty$, we conclude that $\ocop$ is optimal for~(\ref{primal}). 
    Then, by Theorem~\ref{HL} $\expert$ is optimal to~(\ref{MDP}) with cost $c$. 
    
    Hence, we have shown that $\mathcal{C}(\pi_{\textup{E}})=\bigcap_{\varepsilon>0}\mathcal{C}^{\varepsilon}(\pi_{\textup{E}})=\mathcal{C}^{0}(\pi_{\textup{E}})$.  
    One can check easily that $\mathcal{C}(\pi_{\textup{E}})$ is a convex cone. 
    To show that $\mathcal{C}(\pi_{\textup{E}})$ is $\norma{\cdot}{\textup{L}}$-closed in $\operatorname{Lip}(\xxx\times\aaa)$, let $\{c_n\}_{n=1}^\infty\subset\mathcal{C}(\pi_{\textup{E}})$, such that $\limit{n}{\infty}{\norma{c_n-c}{\textup{L}}}=0$, for some $c\in\operatorname{Lip}(\xxx\times\aaa)$. 
    Let $\varepsilon>0$. Then, there exists $n_0\in\mathds{N}$ such that,
    \begin{equation}\label{T4}
        \norma{c_{n_0}-c}{\infty}<\frac{(1-\gamma)\varepsilon}{2}.
    \end{equation}
    On the other hand, $c_{n_0}\in\mathcal{C}^{\frac{\varepsilon}{2}}(\expert)$. 
    Combining this with~(\ref{T4}), we deduce that $c\in\mathcal{C}^{\varepsilon}(\pi_{\textup{E}})$. 
    Since this is true for arbitrary $\varepsilon>0$, we get $c\in\bigcap_{\varepsilon>0}\mathcal{C}^{\varepsilon}(\pi_{\textup{E}})=\mathcal{C}(\pi_{\textup{E}})$, which proves the desired closedness.
\end{proof}
As a result, a cost function is inverse feasible for $\mathcal{B}=(\mathcal{M},\expert)$ if and only if it is $\varepsilon$-inverse feasible for all $\varepsilon>0$. 
Notably, when $\xxx$ and $\aaa$ are finite, and the expert policy $\pi_{\textup{E}}$ is stationary Markov, our formulation aligns with the finite-dimensional inverse feasibility set  introduced in~\cite{Metelli:2021,Linder:2022,Metelli:2023}. 
Furthermore, when the expert is deterministic of the form $\pi_{\textup{E}}(x)\equiv a_1$, for all $x$, then we recover the linear programs discussed in~\cite{Ng:2000,Komanduru:2019,Dexter:2021} (see Appendix~\ref{app:recover}). \\
Using occupancy measures instead of policies, we can assess inverse feasibility for continuous MDPs, regardless of expert policy complexity. This approach allows us to utilize offline expert demonstrations for computing an approximate feasibility set and deriving costs via a sample-based convex program 
This flexibility surpasses previous theoretical settings, where either $\expert$ is assumed to be fully known and deterministic~\cite{Komanduru:2019,Dexter:2021,Metelli:2021} or active querying of $\expert$ is possible for each state~\cite{Linder:2022,Metelli:2023}. 
\begin{proposition}[$\varepsilon$-inverse feasibility set characterization]\label{IPe}
    Under Assumption~\ref{assumption1}, for any $\varepsilon>0$, it holds that 
    a cost function $\tilde{c}$ is in $\mathcal{C}^{\varepsilon}(\pi_{\textup{E}})$ if and only if $\pi_{\textup{E}}$ is $\frac{2-\gamma}{1-\gamma}\varepsilon$-optimal for~\textup{($\MDP{\tilde{\cost}}$)} with cost $\tilde{c}$. 
\end{proposition}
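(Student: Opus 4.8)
The plan is to handle both implications by exhibiting (respectively, extracting) an explicit dual certificate $u\in\operatorname{Lip}(\xxx)$ and exploiting the freedom of adding a constant to it: since $T_{\gamma}^*1=1-\gamma$ on $\xxx\times\aaa$, replacing $u$ by $u+t$ shifts $\tilde c-T_{\gamma}^*u$ by the constant $-t(1-\gamma)$, and this is exactly the lever that trades the pointwise slack against the $\ocop$-averaged slack appearing in~\eqref{Y}. Throughout I would use the identities $\inner{\ocop}{1}{}=\tfrac{1}{1-\gamma}$, $\inner{\ocop}{\tilde c}{}=V^{\expert}_{\tilde c}(\nu_0)$, $\inner{\nu_0}{1}{}=1$, and $\inner{\nu_0}{w}{}=\inner{T_{\gamma}\ocop}{w}{}=\inner{\ocop}{T_{\gamma}^*w}{}$ for $w\in\operatorname{Lip}(\xxx)$ (recall $T_{\gamma}\ocop=\nu_0$), together with the consequence of strong duality (Theorem~\ref{strongduality}) that $V^\star_{\tilde c}\in\operatorname{Lip}(\xxx)$ is optimal for the dual LP with cost $\tilde c$, so that $\tilde c-T_{\gamma}^*V^\star_{\tilde c}\ge 0$ on $\xxx\times\aaa$ and $\inner{\nu_0}{V^\star_{\tilde c}}{}=\mathcal{J}^*_{\tilde c}(\nu_0)=V^\star_{\tilde c}(\nu_0)$. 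Write $\Delta\triangleq V^{\expert}_{\tilde c}(\nu_0)-V^\star_{\tilde c}(\nu_0)\ge 0$ for the suboptimality gap of $\expert$ under cost $\tilde c$.

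For the ``if'' direction, assume $\Delta\le\tfrac{2-\gamma}{1-\gamma}\varepsilon$ and set $u\triangleq V^\star_{\tilde c}+\tfrac{\varepsilon}{1-\gamma}$. Then $T_{\gamma}^*u=T_{\gamma}^*V^\star_{\tilde c}+\varepsilon$, hence $\tilde c-T_{\gamma}^*u=(\tilde c-T_{\gamma}^*V^\star_{\tilde c})-\varepsilon\ge-\varepsilon$ on $\xxx\times\aaa$, which is the second inequality of~\eqref{Y}. For the first one, $\inner{\ocop}{\tilde c-T_{\gamma}^*u}{}=\inner{\ocop}{\tilde c}{}-\inner{\nu_0}{u}{}=V^{\expert}_{\tilde c}(\nu_0)-V^\star_{\tilde c}(\nu_0)-\tfrac{\varepsilon}{1-\gamma}=\Delta-\tfrac{\varepsilon}{1-\gamma}\le\varepsilon$, where the last step uses the hypothesis on $\Delta$ and the identity $\tfrac{2-\gamma}{1-\gamma}-\tfrac{1}{1-\gamma}=1$. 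Therefore $\tilde c\in\mathcal{C}^\varepsilon(\expert)$.

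For the ``only if'' direction, let $\tilde c\in\mathcal{C}^\varepsilon(\expert)$ with a witness $u\in\operatorname{Lip}(\xxx)$ as in~\eqref{Y}, and set $v\triangleq u-\tfrac{\varepsilon}{1-\gamma}$. Then $\tilde c-T_{\gamma}^*v=(\tilde c-T_{\gamma}^*u)+\varepsilon\ge 0$ on $\xxx\times\aaa$, so $v$ is feasible for the dual LP with cost $\tilde c$ and consequently $\inner{\nu_0}{v}{}\le\mathcal{J}^*_{\tilde c}(\nu_0)=V^\star_{\tilde c}(\nu_0)$. On the other hand $\inner{\nu_0}{v}{}=\inner{\ocop}{T_{\gamma}^*u}{}-\tfrac{\varepsilon}{1-\gamma}$, and the first inequality of~\eqref{Y} gives $\inner{\ocop}{T_{\gamma}^*u}{}\ge\inner{\ocop}{\tilde c}{}-\varepsilon=V^{\expert}_{\tilde c}(\nu_0)-\varepsilon$. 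Combining these, $V^{\expert}_{\tilde c}(\nu_0)-\varepsilon-\tfrac{\varepsilon}{1-\gamma}\le V^\star_{\tilde c}(\nu_0)$, i.e.\ $\Delta\le\varepsilon+\tfrac{\varepsilon}{1-\gamma}=\tfrac{2-\gamma}{1-\gamma}\varepsilon$, which says precisely that $\expert$ is $\tfrac{2-\gamma}{1-\gamma}\varepsilon$-optimal for~\textup{($\MDP{\tilde{\cost}}$)}.

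I do not expect a genuine obstacle: the entire content is the bookkeeping of the constant shift $\tfrac{\varepsilon}{1-\gamma}$ — the same device that appears as $v_n=u_n-\tfrac{1}{n(1-\gamma)}$ in the proof of Theorem~\ref{IP} — together with the elementary identity $\tfrac{2-\gamma}{1-\gamma}=1+\tfrac{1}{1-\gamma}$ that makes the two estimates close exactly. The only routine points to check are that $u$ and $v$ belong to $\operatorname{Lip}(\xxx)$, which is immediate since adding a constant preserves the Lipschitz norm and $V^\star_{\tilde c}\in\operatorname{Lip}(\xxx)$ under Assumption~\ref{assumption1}, and that $\tilde c-T_{\gamma}^*V^\star_{\tilde c}\ge 0$ everywhere, which is just dual feasibility of $V^\star_{\tilde c}$ (equivalently, the Bellman optimality inequality).
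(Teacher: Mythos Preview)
Your proof is correct and follows essentially the same approach as the paper. The ``if'' direction is identical (take $u=V^\star_{\tilde c}+\tfrac{\varepsilon}{1-\gamma}$); for the ``only if'' direction the paper integrates the pointwise inequality $\tilde c-T_{\gamma}^*u\ge -\varepsilon$ against the primal optimizer $\tilde\mu$, whereas you shift to $v=u-\tfrac{\varepsilon}{1-\gamma}$ and invoke dual feasibility of $v$ together with weak duality $\inner{\nu_0}{v}{}\le V^\star_{\tilde c}(\nu_0)$ --- these are two phrasings of the same duality computation and yield the identical bound.
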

\begin{proof}
    Assume first that $\tilde{c}\in \mathcal{C}^{\varepsilon}(\pi_{\textup{E}})$ for some $\varepsilon>0$.  
    Then, there exists $\tilde{u}\in\operatorname{Lip}(\xxx)$ such that
    \begin{eqnarray}
        \inner{\ocop}{\tilde{c}-T_{\gamma}^*\tilde{u}}{}&\le&\varepsilon, \label{E1}\\ 
        \tilde{c}-T_{\gamma}^*\tilde{u}&\geq& -\varepsilon,\,\,\,\text{on}\,\,\xxx\times\aaa.  \label{E2}
    \end{eqnarray}
    Since $T_\gamma\ocop=\nu_0$,~(\ref{E1}) can be written equivalently as
    \begin{equation}\label{E3}
        \underbrace{\inner{\ocop}{\tilde{c}}{}}_{=V_{\tilde{c}}^{\pi_{\textup{E}}}(\nu_0)}-\inner{\nu_0}{\tilde{u}}{}\le\varepsilon.
    \end{equation}
    Let $\tilde{\mu}$ be an optimal solution to the primal ~($\primal{\tilde{\cost}}$) with cost function $\tilde{c}$. 
    By integrating~(\ref{E2}) with respect to $\tilde{\mu}$ and using that $T_\gamma\tilde{\mu}=\nu_0$ and $\inner{\tilde{\mu}}{1}{}=\frac{1}{1-\gamma}$, we get 
    \begin{equation}\label{E4}
        \underbrace{\inner{\tilde{\mu}}{\tilde{c}}{}}_{=V_{\tilde{c}}^\star(\nu_0)}-\inner{\nu_0}{\tilde{u}}{}\geq\frac{-\varepsilon}{1-\gamma}.
    \end{equation}

    Therefore, by combining~(\ref{E3}) and~(\ref{E4}), we get
    \begin{equation*}\label{number}
        V^\star_{\tilde{c}}(\nu_0)\le V_{\tilde{c}}^{\pi_{\textup{E}}}(\nu_0)\le V^\star_{\tilde{c}}(\nu_0)+\frac{2-\gamma}{1-\gamma}\varepsilon.
    \end{equation*}
    This proves that $\expert$ is $\frac{2-\gamma}{1-\gamma}$-optimal for ($\MDP{\tilde{\cost}}$) with cost $\tilde{c}$.
    \\
    For the inverse inclusion, $\expert$ be $\frac{2-\gamma}{1-\gamma}$-optimal for ($\MDP{\tilde{\cost}}$), and let $\hat{u}=V^\star_{\tilde{c}}\in\textup{Lip}(\xxx)$ (Theorem~\ref{theorem3.1}) be the optimal value function for the forward ($\MDP{\tilde{\cost}}$). 
    Then, by virtue of Theorem~\ref{strongduality}, the following hold:
    \begin{eqnarray}
        \tilde{c}- T_{\gamma}^*\hat{u}&\geq& 0,\,\,\mbox{on}\,\,\xxx\times\aaa, \label{B1}\\
        \underbrace{\inner{\mu_{\nu_0}^{\pi_{\textup{E}}}}{\tilde{c}}{}}_{=V_{\tilde{c}}^{\pi_{\textup{E}}}(\nu_0)}-\underbrace{\inner{\nu_0}{\hat{u}}{}}_{=V_{\tilde{c}}^\star(\nu_0)}&\le&\frac{2-\gamma}{1-\gamma}\varepsilon \label{B2}.
    \end{eqnarray}
    Note that~(\ref{B1}) holds due to dual feasibility, while~(\ref{B2}) holds because $\expert$ is $\frac{2-\gamma}{1-\gamma}$-optimal for ($\MDP{\tilde{\cost}}$). 
    By setting $\tilde{u}=\hat{u}+\frac{\varepsilon}{1-\gamma}\in\textup{Lip}(\xxx)$, we get by~(\ref{B1}) that $\tilde{c}-T_{\gamma}^*\tilde{u}\geq-\varepsilon,\,\,\mbox{on}\,\,\xxx\times\aaa$.
    Moreover, $\inner{\mu_{\nu_0}^{\pi_{\textup{E}}}}{\tilde{c}-T_{\gamma}^*\tilde{u}}{}=\inner{\mu_{\nu_0}^{\pi_{\textup{E}}}}{\tilde{c}}{}-\inner{\nu_0}{\hat{u}}{}-\inner{\nu_0}{\frac{\varepsilon}{1-\gamma}}{}
    \le\frac{2-\gamma}{1-\gamma}\varepsilon-\frac{\varepsilon}{1-\gamma}=\varepsilon$,
    where the inequality holds due to~(\ref{B2}). 
    Therefore, $\tilde{c}\in\mathcal{C}^{\varepsilon}(\pi_{\textup{E}})$. This concludes the proof.
\end{proof}
As $\varepsilon\rightarrow 0$, the next proposition indicates a close approximation to the inverse problem solution.
\begin{proposition}\label{prop:e0}
    Let $(\varepsilon_n)_n$ be a sequence such that $\lim_{n\rightarrow\infty}{\varepsilon_n}=0$ and let $c_{n}\in\mathcal{C}^{\varepsilon_n}(\pi_{\textup{E}})$. 
    Then, every accumulation point $c$ of the sequence $(c_{n})_n$ is inverse feasible, i.e., $c\in\mathcal{C}(\pi_{\textup{E}})$.
\end{proposition}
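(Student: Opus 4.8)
The plan is to show that the accumulation point $c$ belongs to $\mathcal{C}^{\varepsilon}(\pi_{\textup{E}})$ for \emph{every} $\varepsilon>0$, and then to invoke Theorem~\ref{IP}, which gives $\bigcap_{\varepsilon>0}\mathcal{C}^{\varepsilon}(\pi_{\textup{E}})=\mathcal{C}(\pi_{\textup{E}})$. Two observations make this work: first, the family is monotone, i.e.\ $\mathcal{C}^{\varepsilon'}(\pi_{\textup{E}})\subseteq\mathcal{C}^{\varepsilon}(\pi_{\textup{E}})$ whenever $\varepsilon'\le\varepsilon$, which is immediate from Definition~\ref{defofinv} since the same $u$ witnessing~\eqref{Y} for $\varepsilon'$ witnesses it for the larger $\varepsilon$; second, a witness $u$ for $c_n\in\mathcal{C}^{\varepsilon_n}(\pi_{\textup{E}})$ can be \emph{reused} for a nearby cost $c$, at the price of enlarging the slack by a quantity controlled by $\norma{c-c_n}{\infty}$ (this is the same mechanism as in the closedness step at the end of the proof of Theorem~\ref{IP}).

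Concretely, fix $\varepsilon>0$. Since $c$ is an accumulation point of $(c_n)_n$ in $(\operatorname{Lip}(\xxx\times\aaa),\norma{\cdot}{\textup{L}})$, there is a subsequence $(c_{n_k})_k$ with $\norma{c_{n_k}-c}{\textup{L}}\to 0$, hence in particular $\norma{c_{n_k}-c}{\infty}\to 0$; along this subsequence $\varepsilon_{n_k}\to 0$ as well. Choose $k$ large enough that simultaneously $\varepsilon_{n_k}\le\varepsilon/2$ and $\norma{c_{n_k}-c}{\infty}\le(1-\gamma)\varepsilon/2$. By monotonicity, $c_{n_k}\in\mathcal{C}^{\varepsilon_{n_k}}(\pi_{\textup{E}})\subseteq\mathcal{C}^{\varepsilon/2}(\pi_{\textup{E}})$, so there exists $u\in\operatorname{Lip}(\xxx)$ with $\langle\ocop,c_{n_k}-T_{\gamma}^*u\rangle\le\varepsilon/2$ and $c_{n_k}-T_{\gamma}^*u\ge-\varepsilon/2$ on $\xxx\times\aaa$.

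It then remains to check that this same $u$ certifies $c\in\mathcal{C}^{\varepsilon}(\pi_{\textup{E}})$. Writing $c-T_{\gamma}^*u=(c-c_{n_k})+(c_{n_k}-T_{\gamma}^*u)$ and bounding the first term below by $-\norma{c-c_{n_k}}{\infty}\ge-(1-\gamma)\varepsilon/2\ge-\varepsilon/2$ (using $\gamma\in(0,1)$) yields $c-T_{\gamma}^*u\ge-\varepsilon$ on $\xxx\times\aaa$. For the occupancy-measure constraint, split $\langle\ocop,c-T_{\gamma}^*u\rangle=\langle\ocop,c-c_{n_k}\rangle+\langle\ocop,c_{n_k}-T_{\gamma}^*u\rangle$ and use $\ocop\ge 0$ together with $\inner{\ocop}{1}{}=\frac{1}{1-\gamma}$ to get $\langle\ocop,c-c_{n_k}\rangle\le\norma{c-c_{n_k}}{\infty}\,\inner{\ocop}{1}{}\le\varepsilon/2$, so that $\langle\ocop,c-T_{\gamma}^*u\rangle\le\varepsilon$. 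Hence $c\in\mathcal{C}^{\varepsilon}(\pi_{\textup{E}})$; since $\varepsilon>0$ was arbitrary, $c\in\bigcap_{\varepsilon>0}\mathcal{C}^{\varepsilon}(\pi_{\textup{E}})=\mathcal{C}(\pi_{\textup{E}})$ by Theorem~\ref{IP}.

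I do not anticipate a real obstacle: the proof is essentially the closedness argument of Theorem~\ref{IP} combined with $\varepsilon_n\to 0$ and the monotonicity of $\{\mathcal{C}^{\varepsilon}(\pi_{\textup{E}})\}_{\varepsilon>0}$. The only delicate point is the topology in which "accumulation point" is taken: the reuse-of-witness estimates require $\norma{c_{n_k}-c}{\infty}\to 0$, which is automatic from $\norma{\cdot}{\textup{L}}$-convergence; should one instead only have pointwise convergence of $(c_n)_n$ on the compact set $\xxx\times\aaa$ under a uniform Lipschitz bound, an Arzelà--Ascoli argument upgrades it to uniform convergence along a subsequence and the remainder goes through verbatim.
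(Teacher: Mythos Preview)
Your proof is correct and, in fact, more direct and more general than the paper's own argument. The paper first restricts (under a ``without loss of generality'' that is not justified in full generality) to a finite-dimensional setting in which each $c_n$ is a convex combination $\sum_{i=1}^k\alpha_{n,i}c_i$ of fixed features $\{c_i\}_{i=1}^k$, with the value-function witnesses lying in a corresponding finite-dimensional span; it then invokes compactness of the simplex in $\mathds{R}^k$ to extract a convergent subsequence of the coefficient vectors $(\alpha_n)_n$ and passes to the limit in the constraints. By contrast, you work directly in $\operatorname{Lip}(\xxx\times\aaa)$ and simply recycle the closedness step already established at the end of the proof of Theorem~\ref{IP}, together with the monotonicity $\mathcal{C}^{\varepsilon'}(\pi_{\textup{E}})\subseteq\mathcal{C}^{\varepsilon}(\pi_{\textup{E}})$ for $\varepsilon'\le\varepsilon$ and the identity $\mathcal{C}(\pi_{\textup{E}})=\bigcap_{\varepsilon>0}\mathcal{C}^{\varepsilon}(\pi_{\textup{E}})$. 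Your route avoids the finite-dimensional hypothesis entirely and makes transparent that the proposition is an immediate corollary of Theorem~\ref{IP}; the paper's approach, on the other hand, is tailored to the parametric feature setting used later in Section~\ref{4} and so dovetails with the subsequent approximation scheme, at the cost of not covering the general statement as written.
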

Before stating the proof of Proposition~\ref{prop:e0} we need the following preparation.
Without loss of generality, assume that the true cost belongs to $\mathcal{C}_{\textup{convex}}\triangleq\{\sum_{i=1}^k\alpha_i c_i\mid\alpha\geq0,\,\,\sum_{i=1}^n\alpha_i=1\}$, where $\{c_i\}_{i=1}^k\subset\textup{Lip}(\xxx\times\aaa)$ are known features.
By linearity the true optimal value function belongs to $\{\sum_{i=1}^k\alpha_i u_i\mid\alpha\geq0,\,\,\sum_{i=1}^k\alpha_i=1\}$, where $u_i=V^\star_{c_i}(\expert)$, for all $i=1,\ldots,k$. 
Note that by Theorem~\ref{theorem3.1}, $\{u_i\}_{i=1}^k\subset\textup{Lip}(\xxx)$.
By using similar arguments to the proof of Theorem~\ref{IP}, we get that a cost function $c$ is inverse feasible, i.e., $c\in\mathcal{C}(\expert)$ if and only if there exists $\alpha\in\mathds{R}^{k}$ such that
\begin{align}\label{finitedim}
    & \left\{ \begin{array}{ll}
        \sum_{i=1}^k\alpha_i\langle\mu_{\nu_0}^{\pi_{\textup{E}}},c_i-T_{\gamma}^*u_i\rangle &= {0},    \\
        \sum_{i=1}^k\alpha_i (c_i-T_{\gamma}^*u_i)&\ge {0} ,\,\,\mbox{on}\,\,\xxx\times \aaa\\
        \alpha\geq0,\,\,\,\sum_{i=1}^k\alpha_i=1.&
    \end{array} \right.
\end{align}
Similar arguments hold for any choice of finite-dimensional space or convex set $S\subset\textup{Lip}(\xxx)$. 
\begin{proof}[Proof of Proposition~\ref{prop:e0}]\
    Let $\{c_n\}_{n=1}^\infty\subset\mathcal{C}^{\varepsilon_n}(\expert)$.
    For each $n\in\mathds{N}$, there exists $\alpha_n\in\mathds{R}^{k}$, such that
    \begin{align}\label{finitedim2}
        & \left\{ \begin{array}{ll}
            \sum_{i=1}^k\alpha_{n,i}\langle\mu_{\nu_0}^{\pi_{\textup{E}}},c_i-T_{\gamma}^*u_i\rangle &\le {\varepsilon_n},    \\
            \sum_{i=1}^k\alpha_{n,i} (c_i-T_{\gamma}^*u_i)&\ge -{\varepsilon_n} ,\,\,\mbox{on}\,\,\xxx\times \aaa\\
            \alpha_{n}\geq0,\,\,\,\sum_{i=1}^k\alpha_{n,i}=1.&
        \end{array} \right.
     \end{align}
    Since the sequence $\{\alpha_n\}_n$ is bounded in $\mathds{R}^k$, there exists a subsequence $\{\alpha_{n_l}\}_{l=1}^\infty$ such that $\lim_{l\rightarrow\infty}\alpha_{n_l}=\alpha$, for some $\alpha\in\mathds{R}^k$.
    Taking the $l\rightarrow\infty$ in~(\ref{finitedim2}), we get~(\ref{finitedim}) and so $c=\sum_{i=1}^k\alpha_ic_i\in\mathcal{C}(\expert)$. 
    This concludes the proof. 
\end{proof}
Finally we show that the $\varepsilon$-inverse feasibility set $\mathcal{C}^{\varepsilon}(\pi_{\textup{E}})$ satisfies the $\varepsilon$-optimality criterion considered in~\cite{Metelli:2021,Linder:2022,Metelli:2023}; see for example~\cite[Def.~2]{Linder:2022}.
\begin{proposition}\label{optimeteli}
    Let $\varepsilon>0$. 
    It holds that 
    $\inf_{c\in\mathcal{C}(\expert)}V^{\tilde{\pi}}_c(\nu_0)-V^{\expert}_c(\nu_0)\le\frac{2-\gamma}{1-\gamma}\varepsilon$, for all $\tilde{c}\in\mathcal{C}^\varepsilon(\expert)$, where $\tilde{\pi}$ is an optimal policy for the recovered cost $\tilde{c}$.
\end{proposition}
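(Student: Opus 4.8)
The plan is to invoke Proposition~\ref{IPe} and then produce a single cost in $\mathcal{C}(\expert)$ that already achieves the claimed bound; since the infimum over $\mathcal{C}(\expert)$ can only be smaller, this suffices. So I would fix $\tilde c\in\mathcal{C}^\varepsilon(\expert)$ and let $\tilde\pi$ be a $\gamma$-discount $\nu_0$-optimal stationary Markov policy for the forward problem with cost $\tilde c$ (such a policy exists via the Bellman optimality equation, cf.\ Theorem~\ref{theorem3.1}), so that $V^{\tilde\pi}_{\tilde c}(\nu_0)=V^\star_{\tilde c}(\nu_0)$. By Proposition~\ref{IPe}, $\expert$ is $\tfrac{2-\gamma}{1-\gamma}\varepsilon$-optimal for the forward problem with cost $\tilde c$, i.e.\ $V^\expert_{\tilde c}(\nu_0)-V^\star_{\tilde c}(\nu_0)\le\tfrac{2-\gamma}{1-\gamma}\varepsilon$. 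I would also record that for any $c\in\mathcal{C}(\expert)$ the expert is $\nu_0$-optimal, so $V^\expert_c(\nu_0)=V^\star_c(\nu_0)\le V^{\tilde\pi}_c(\nu_0)$ and the quantity $V^{\tilde\pi}_c(\nu_0)-V^\expert_c(\nu_0)$ is exactly the (nonnegative) suboptimality gap of the recovered policy $\tilde\pi$ measured against $c$.

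Next I would exhibit the cost. Put $\hat u\triangleq V^\star_{\tilde c}$, which lies in $\operatorname{Lip}(\xxx)$ by Theorem~\ref{theorem3.1} and is an optimal (hence feasible) solution of the dual LP, so $\tilde c-T_\gamma^*\hat u\ge 0$ on $\xxx\times\aaa$; by strong duality and complementary slackness (Theorem~\ref{strongduality}, Theorem~\ref{HL}) one also has $\inner{\mu^{\tilde\pi}_{\nu_0}}{\tilde c-T_\gamma^*\hat u}{}=0$, while $\inner{\ocop}{\tilde c-T_\gamma^*\hat u}{}=V^\expert_{\tilde c}(\nu_0)-V^\star_{\tilde c}(\nu_0)\le\tfrac{2-\gamma}{1-\gamma}\varepsilon$, using $T_\gamma\ocop=\nu_0$ and the first paragraph. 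Now set $c\triangleq T_\gamma^*\hat u\in\operatorname{Lip}(\xxx\times\aaa)$. With the certificate $u=\hat u$, the pair $(c,u)$ satisfies the defining system of $\mathcal{C}^0(\expert)$ in Definition~\ref{defofinv} trivially (since $c-T_\gamma^*\hat u\equiv 0$), so $c\in\mathcal{C}^0(\expert)=\mathcal{C}(\expert)$ by Theorem~\ref{IP}. Since $T_\gamma\mu^\pi_{\nu_0}=\nu_0$ for every policy $\pi$, one computes $V^\pi_c(\nu_0)=\inner{\mu^\pi_{\nu_0}}{T_\gamma^*\hat u}{}=\inner{\nu_0}{\hat u}{}$ independently of $\pi$; in particular $V^{\tilde\pi}_c(\nu_0)-V^\expert_c(\nu_0)=0\le\tfrac{2-\gamma}{1-\gamma}\varepsilon$, and hence $\inf_{c\in\mathcal{C}(\expert)}\big(V^{\tilde\pi}_c(\nu_0)-V^\expert_c(\nu_0)\big)\le\tfrac{2-\gamma}{1-\gamma}\varepsilon$. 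If one prefers the bound realized by a cost that does not collapse all policies to the same value, the variant is to keep the reduced cost $r\triangleq\tilde c-T_\gamma^*\hat u\ge 0$ and modify it only off $\operatorname{supp}(\ocop)$ so that it vanishes $\ocop$-a.e.\ while remaining a bounded Lipschitz function; then $T_\gamma^*\hat u+(\text{modified }r)\in\mathcal{C}(\expert)$, and the gap of $\tilde\pi$ is controlled through $\inner{\mu^\pi_{\nu_0}}{1}{}=\tfrac1{1-\gamma}$ times the $L^\infty$-size of $r$ on $\operatorname{supp}(\mu^{\tilde\pi}_{\nu_0})$ together with $\inner{\ocop}{r}{}\le\tfrac{2-\gamma}{1-\gamma}\varepsilon$.

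The step I expect to be delicate is precisely this passage from $\varepsilon$-inverse feasibility to \emph{exact} inverse feasibility: Definition~\ref{defofinv} demands the complementary-slackness identity $\inner{\ocop}{c-T_\gamma^*u}{}=0$ with equality, not merely up to $\varepsilon$, so the nonnegative slack $\tilde c-T_\gamma^*\hat u$ must either be discarded entirely (the clean but degenerate choice above) or redistributed so as to vanish on $\operatorname{supp}(\ocop)$ while still being an admissible Lipschitz cost. Quantifying the resulting perturbation of $V^{\tilde\pi}(\nu_0)$ and $V^\expert(\nu_0)$ — each bounded by $\tfrac1{1-\gamma}$ times the perturbation magnitude, itself bounded via Proposition~\ref{IPe} — is where the factor $\tfrac{2-\gamma}{1-\gamma}$ is forced, and it is the only nonroutine estimate in the argument.
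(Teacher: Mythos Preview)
Your argument is correct and in spirit the same as the paper's: both produce a cost of the form $c=T_\gamma^*u$ (you take $u=\hat u=V^\star_{\tilde c}$, the paper takes $u=\tilde u$ from the $\varepsilon$-certificate in Definition~\ref{defofinv}) and check that $c\in\mathcal{C}^0(\expert)=\mathcal{C}(\expert)$ via the trivial certificate $c-T_\gamma^*u\equiv0$. Where the two diverge is only in the final estimate. The paper decomposes $\inner{\mu^{\tilde\pi}_{\nu_0}-\ocop}{c}{}$ into three pieces and bounds them using $c-\tilde c\le\varepsilon$ on $\xxx\times\aaa$ and $\inner{\ocop}{\tilde c-c}{}\le\varepsilon$, arriving at $\tfrac{2-\gamma}{1-\gamma}\varepsilon$. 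You cut this short by noting that for \emph{any} $u$ and $c=T_\gamma^*u$, one has $V^\pi_c(\nu_0)=\inner{T_\gamma\mu^\pi_{\nu_0}}{u}{}=\inner{\nu_0}{u}{}$ independently of $\pi$, so the gap is exactly $0$. Your route is cleaner and in fact shows the infimum in the statement is always $\le 0$; the constant $\tfrac{2-\gamma}{1-\gamma}$ plays no role here. Consequently, the invocation of Proposition~\ref{IPe} in your first paragraph, the complementary-slackness identities you record, and the ``delicate'' redistribution sketch in your last paragraph are all correct but unnecessary for the result as stated.
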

\begin{proof}
    It is easy to check that for every $\tilde{c}\in\mathcal{C}^{\varepsilon}(\expert)$, there exist $c\in\mathcal{C}(\expert)$ such that $\inner{\ocop}{\tilde{c}-c}{}\le\varepsilon$, and $c-\tilde{c}\le\varepsilon$, on $\xxx\times\aaa$. 
    For example, if $\tilde{u}\in\textup{Lip}(\xxx)$ such that
    \begin{align}\label{Y}
        & \left\{ \begin{array}{ll}
            \langle\mu_{\nu_0}^{\pi_{\textup{E}}},\tilde{c}-T_{\gamma}^*\tilde{u}\rangle &\le {\varepsilon},    \\
            \tilde{c}-T_{\gamma}^*\tilde{u}&\ge -{\varepsilon} ,\,\,\mbox{on}\,\,\xxx\times \aaa,
        \end{array} \right.
    \end{align}
    then we may choose $c=T_\gamma^*\tilde{u}$. Let $\tilde{\pi}$ be an optimal policy for $(\MDP{\tilde{\cost}})$. 
    Then,
    \begin{align*}
        V^{\tilde{\pi}}_c(\nu_0)-V^{\expert}_c(\nu_0)&=\inner{\mu_{\nu_0}^{\tilde{\pi}}-\ocop}{c}{}\\
        &=\inner{\mu_{\nu_0}^{\tilde{\pi}}}{c-\tilde{c}}{}+\underbrace{\inner{\mu_{\nu_0}^{\tilde{\pi}}-\ocop}{\tilde{c}}{}}_{\le 0}+\inner{\ocop}{\tilde{c}-c}{}\\
        &\le\inner{\mu_{\nu_0}^{\tilde{\pi}}-\ocop}{c-\tilde{c}}{}\\
        &\le \frac{2-\gamma}{1-\gamma}\varepsilon
    \end{align*}
\end{proof}
This condition ensures that when $\varepsilon$ is small we avoid an unnecessarily large \emph{approximate} feasibility set since there is a possible true cost in $\mathcal{C}(\expert)$ with a small error for every possible recovered cost function in $\mathcal{C}^\varepsilon(\expert)$.~\footnote{The second condition in \cite[Def.~2]{Linder:2022} is trivially satisfied with zero error since $\mathcal{C}(\expert)\subset\mathcal{C}^\varepsilon(\expert)$.}

\section{Towards recovering a nearly optimal cost function}

Although we characterized the inverse and $\varepsilon$-inverse feasibility sets in Theorem~\ref{IP} and Proposition~\ref{IPe} respectively, it is not clear yet how to compute them, as~(\ref{Y}) is an infinite-dimensional feasibility LP. 
In practice, the following challenges need to be addressed:
\begin{enumerate}[label=(\alph*), itemsep = 1mm, topsep = -1mm]
    \item The inverse problem is ill-conditioned and ill-posed since each task is consistent with many cost functions, and thus a central challenge is to end up with a meaningful one. 
    To avoid trivial solutions, in Section~\ref{sec:normalization} we motivate the addition of a linear normalization constraint.  
    \item  Another challenge appears because the LP formulation is infinite-dimensional,
    hence computationally intractable. 
    In Section~\ref{4} we address this problem by proposing a tractable approximation method with probabilistic performance bounds. 
    \item In practice, complete knowledge of $\expert$ and $\mathsf{P}$ is often unavailable.
    In Section~\ref{5}, we tackle this challenge by assuming that we have access to a finite set of traces of the expert policy and a \emph{generative-model oracle}. We use empirical counterparts of $\expert$ and $\mathsf{P}$ and provide error bounds to quantify our approach's accuracy with sample data.
\end{enumerate}
The main building blocks of our methodology are depicted in Figure~\ref{fig:idea}.
\begin{figure}
    \centering
    \includegraphics[scale=0.5]{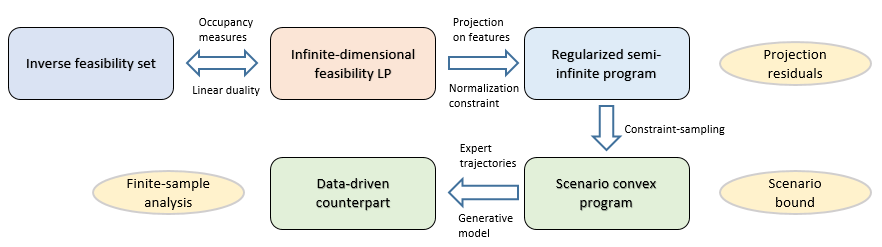}
    \caption{Main building blocks of our methodology}
    \label{fig:idea}
\end{figure}
%

\subsection{Normalization constraint}\label{sec:normalization}

A well-known limitation of IRL is that it suffers from the \emph{ambiguity} issue, i.e., the problem admits infinitely many solutions. 
For example, any constant cost function, including the zero cost, is inverse feasible. 
In addition, as it is apparent from the characterization of $\mathcal{C}(\expert)=\mathcal{C}^0(\expert)$ in Theorem~\ref{IP}, for any $u\in\textup{Lip}(\xxx)$ and $c\in\mathcal{C}(\expert)$, the cost $c+T_\gamma^*u$ is inverse feasible. 
This phenomenon, also known as~\emph{reward shaping}~\cite{ng1999potential} refers to the modification or design of a reward function to provide additional guidance or incentives to an agent during learning. 
In addition, since $\mathcal{C}(\expert)$ is a convex cone and closed for the sup-norm (Theorem~\ref{IP}) the set of solutions to IRL is closed to convex combinations and uniform limits.

All these examples illustrate that the inverse feasibility set $\mathcal{C}(\expert)$ contains some solutions that arise from mathematical artifacts. 
To mitigate this difficulty and avoid trivial solutions, inspired by~\cite{inverseopt}, we enforce the additional natural normalization constraint $\int_{\xxx\times\aaa} (c-T_{\gamma}^*u)(x,a) \,\drv(x,a)= 1$.
The following proposition justifies this choice.
\begin{proposition}\label{prop:nomalization}
If Definition~\ref{defofinv} of $\mathcal{C}(\expert)=\mathcal{C}^0(\expert)$ includes the normalization constraint $\int_{\mathcal{X}\times\mathcal{A}} (c-T^*_\gamma u) \mathsf{d}x \mathsf{d}a=1$, then all constant cost functions are excluded from the inverse feasibility set.
\end{proposition}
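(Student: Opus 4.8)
The plan is to show that if a constant cost function $c \equiv \kappa$ were inverse feasible under the augmented definition, then the normalization constraint $\int_{\xxx\times\aaa}(c - T^*_\gamma u)(x,a)\,\drv(x,a) = 1$ would force a contradiction. First I would observe that inverse feasibility of $c$ (in the sense of the augmented Definition~\ref{defofinv}, with $\varepsilon = 0$) requires the existence of $u \in \operatorname{Lip}(\xxx)$ with $c - T^*_\gamma u \geq 0$ on $\xxx\times\aaa$ and $\inner{\ocop}{c - T^*_\gamma u}{} = 0$. The key elementary fact is that $\ocop$ is a nonnegative measure whose marginal structure gives $\ocop(\xxx\times\aaa) = \inner{\ocop}{1}{} = \tfrac{1}{1-\gamma} > 0$, i.e.\ $\ocop$ has full support in the sense that it charges every open set reachable by $\expert$; combined with $c - T^*_\gamma u \geq 0$ and continuity of $c - T^*_\gamma u$ (both $c$ and $T^*_\gamma u$ lie in $\operatorname{Lip}(\xxx\times\aaa)$ by Assumption~\ref{assumption1}), the condition $\inner{\ocop}{c - T^*_\gamma u}{} = 0$ yields $c - T^*_\gamma u = 0$ at least $\ocop$-almost everywhere.

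Next I would specialize to $c \equiv \kappa$ constant. Recall $T^*_\gamma u(x,a) = u(x) - \gamma \int_\xxx u(y)\,\mathsf{P}(\drv y|x,a)$ and that $T^*_\gamma 1 = 1-\gamma$ on $\xxx\times\aaa$. Consider the candidate $u^\star \equiv \tfrac{\kappa}{1-\gamma}$, for which $T^*_\gamma u^\star \equiv \kappa$, so $c - T^*_\gamma u^\star \equiv 0$; more generally, since $V^\star_c$ for a constant cost equals the constant $\tfrac{\kappa}{1-\gamma}$, the dual-feasible shifts of $u^\star$ by adding functions $w$ with $T^*_\gamma w \le 0$ all give $c - T^*_\gamma u \ge 0$. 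The crucial point is that the complementary-slackness equality $\inner{\ocop}{c - T^*_\gamma u}{} = 0$ together with $c - T^*_\gamma u \ge 0$ forces $T^*_\gamma u = \kappa$ on the support of $\ocop$; I would then argue, using weak continuity of $\mathsf{P}$ and the fact that $\ocop$ assigns positive mass near the initial distribution $\nu_0$, that this integrates to $\int_{\xxx\times\aaa}(c - T^*_\gamma u)\,\drv(x,a) = 0$ on the relevant region — but the normalization demands this integral equal $1$. If one wants the clean global statement, the simplest route is: integrate $c - T^*_\gamma u \ge 0$ against $\ocop$ to get equality a.e., deduce $\inner{\nu_0}{u}{} = \tfrac{\kappa}{1-\gamma} = \inner{\ocop}{c}{}$, and then observe the normalization integral $\int (c - T^*_\gamma u)\,\drv(x,a)$ cannot simultaneously be forced to a nonzero value while $c-T^*_\gamma u$ vanishes on $\mathrm{supp}(\ocop)$ and is pinned down by the Bellman structure for constant cost.

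I expect the main obstacle to be making rigorous the step from "$\inner{\ocop}{c - T^*_\gamma u}{} = 0$ with $c - T^*_\gamma u \ge 0$" to a statement about the Lebesgue integral $\int_{\xxx\times\aaa}(c - T^*_\gamma u)\,\drv(x,a)$, since $\ocop$ need not be equivalent to Lebesgue measure — a deterministic or low-dimensional expert policy makes $\ocop$ singular. The honest resolution is that for a \emph{constant} cost the function $c - T^*_\gamma u$ is completely determined: $c - T^*_\gamma u = \kappa - u(x) + \gamma \mathsf{P}u(x,a)$, and nonnegativity everywhere plus the averaging identity $\int u\,\drv\nu_0$-type relations force $u$ to be (weakly) constrained so that $c - T^*_\gamma u \equiv 0$ everywhere — not just $\ocop$-a.e. — because any region where $c - T^*_\gamma u > 0$ strictly would violate dual optimality of $u$ for the constant-cost MDP whose value function is itself constant. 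Once $c - T^*_\gamma u \equiv 0$ on all of $\xxx\times\aaa$, the normalization integral equals $0 \neq 1$, so no constant cost survives; I would present this as the cleanest line of argument and relegate the measure-theoretic subtlety to the observation that for constant costs the slackness function is forced to vanish identically.
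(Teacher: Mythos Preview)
Your overall plan matches the paper's: set up complementary slackness for the certificate $u$, specialize to constant cost $c\equiv\kappa$, argue that $c - T^*_\gamma u$ must vanish, and conclude the normalization integral equals $0\neq 1$. You also correctly flag the measure-theoretic subtlety in passing from vanishing $\ocop$-a.e.\ to a statement about the Lebesgue integral.

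However, your proposed resolution of that subtlety has a genuine gap. You assert that ``any region where $c - T^*_\gamma u > 0$ strictly would violate dual optimality of $u$,'' and conclude $c - T^*_\gamma u \equiv 0$ everywhere. This does not follow: dual optimality only fixes $\inner{\nu_0}{u}{} = \kappa/(1-\gamma)$, not $u$ pointwise. Concretely, if $h\in\operatorname{Lip}(\xxx)_+$ satisfies $h(x)\ge\gamma\,\mathsf{P}h(x,a)$ for all $(x,a)$ and $\inner{\nu_0}{h}{}=0$, then $u=\kappa/(1-\gamma)-h$ is dual feasible and dual optimal for the constant-cost problem, yet $c - T^*_\gamma u = h(x)-\gamma\,\mathsf{P}h(x,a)$ need not vanish identically. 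So your argument as written does not exclude all certificates.

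The paper's route is more direct at exactly this point: it invokes Theorem~\ref{IP} and its proof to assert that any certificate $u$ in $\mathcal{C}^0(\expert)$ must coincide with the optimal value function $V^\star_c$ at least $\nu_0$-almost everywhere. The underlying reason is standard --- dual feasibility $c - T^*_\gamma u \ge 0$ is the Bellman inequality and forces $u \le V^\star_c$ pointwise, while strong duality gives $\inner{\nu_0}{u}{} = \inner{\nu_0}{V^\star_c}{}$, hence $u = V^\star_c$ $\nu_0$-a.e. For $c\equiv C$ one has $V^\star_c\equiv C/(1-\gamma)$, so $u\equiv C/(1-\gamma)$ $\nu_0$-a.e.\ and $c - T^*_\gamma u = 0$ $\nu_0$-a.e.; the paper then concludes the normalization integral is zero. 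You mention $V^\star_c = \kappa/(1-\gamma)$ but only as a candidate, never using the pointwise bound $u \le V^\star_c$ to constrain \emph{every} certificate --- that is the missing ingredient relative to the paper's argument. (Your residual worry that $\nu_0$-a.e.\ vanishing does not automatically control the Lebesgue integral is legitimate, and the paper's own final step is correspondingly terse; but the intended mechanism is the identification $u = V^\star_c$ a.e., not the abstract dual-optimality claim you propose.)
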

Note that Proposition~\ref{prop:nomalization} is a minor extension from {\cite[Prop.~7]{inverseopt}}.
\begin{proof}[Proof of Proposition~\ref{prop:nomalization}]\label{app:proof:normalization}\
    Let $c\in\mathcal{C}(\pi_{\textup{E}})$, then there exists a certificate $u\in\text{Lip}(\mathcal{X})$ such that $c-T_\gamma^*u\geq 0$, on $\mathcal{X}\times\mathcal{A}$ and $\langle\mu_{\nu_0}^{\pi_{\textup{E}}},c-T_\gamma^*u\rangle=0$. 
    By Theorem~\ref{IP} and its proof, we get that $c$ is inverse feasible and $u=V^\star_c$ $\nu_0$-almost everywhere (a.e.). 
    Thus, if $c\equiv C$, for some constant $C$, then $u\equiv\frac{C}{1-\gamma}$, $\nu_0$-a.e. 
    We then have $c-T_\gamma^*u = 0$, $\nu_0$-a.e., and so $\int_{\mathcal{X}\times\mathcal{A}} (c-T^*_\gamma u) \mathsf{d}x \mathsf{d}a =0$. 
\end{proof}
Alternatively, heuristics can refine solutions by using prior knowledge, such as cost class constraints, limiting cost-value dependence, and enforcing conic constraints or shape conditions.

\subsection{The case of known dynamics and expert policy}\label{4}
We first consider the case where the expert policy $\pi_{\textup{E}}$, the induced occupation measure $\ocop$ and the transition law $\mathsf{P}$ are known. 
We leverage developments 
in randomized convex optimization, leading to an approximation scheme with a priori performance guarantees. 
As a first step, we introduce a semi-infinite convex formulation that enforces the normalization constraint, involves a restriction of the decision variables from an infinite-dimensional function space to a finite-dimensional subspace, and considers an additional norm constraint that effectively acts as a regularizer. 
The resulting regularized semi-infinite \emph{inner approximation}, which we call \emph{inverse program} is given by
\begin{align*}\label{RIP}
    & \left\{\begin{array}{ll}
        \inf_{\alpha,\beta,\varepsilon} & \varepsilon\\
        \textup{s.t.}& \inner{\ocop}{c-T_{\gamma}^*u}{}\le\varepsilon,\\
        & c(x,a)-T_{\gamma}^* u(x,a)\ge -{\varepsilon},\,\,\forall\, (x,a)\in\xxx\times\aaa, \\
        & \textcolor{black}{\int_{\xxx\times\aaa} (c-T_{\gamma}^*u)(x,a) \,\drv(x,a)= 1},\\
        & c\in\mathbf{C}_{n_c},u\in\mathbf{U}_{n_u},\varepsilon\geq 0,
    \end{array}\right.\tag{\color{blue}\textup{IP}}
\end{align*}
where ${\mathbf{C}}_{n_c}\triangleq\{\sum_{j=1}^{n_c}\alpha_j c_j:\,\alpha=\{\alpha_i\}_{i=1}^{n_c}\in\ar^{n_c}\,,\lVert\alpha\rVert_1\le\theta\}$ with $\{c_i\}_{i=1}^{n_c}\subset\operatorname{Lip}(\xxx\times\aaa)$ being linearly independent basis functions with Lipschitz constant $L_c>0$, ${\mathbf{U}}_{n_u}\triangleq\{\sum_{i=1}^{n_u}\beta_i u_i:\,\beta=\{\beta_i\}_{i=1}^{n_u}\in\ar^{n_u},\,\lVert\beta\rVert_1\le\theta\}$, with $\{u_i\}_{i=1}^{n_u}\subset\operatorname{Lip}(\xxx)$ being linearly independent basis functions with Lipschitz constant $L_u>0$, and $\theta>0$ is an appropriately chosen regularization parameter.

Note that~(\ref{RIP}) is derived by relaxing the constraints in the inverse feasibility set $\mathcal{C}(\pi_{\textup{E}})$ and paying a penalty when violated. 
In particular, let $(\tilde{\varepsilon},\tilde{\alpha},\tilde{\beta})$ be an optimal solution of the semi-infinite program (\ref{RIP}). 
Then, $\tilde{c}\triangleq\athroisma{i}{1}{n_c}{\tilde{\alpha}_i c_i}\in\mathcal{C}^{\tilde{\varepsilon}}(\expert)$, from where it becomes apparent that the smaller the value of $\tilde{\varepsilon}$, the better the quality of the extracted cost function $\tilde{c}$ (as by Proposition~\ref{IPe}). One would intuitively expect that $\tilde{\varepsilon}$ depends on the choice of basis functions for the cost (resp. value) function as well as on the parameters $n_c$ (resp. $n_u$) and $\theta$. 
This dependency is highlighted by the following proposition.
\begin{proposition}[Basis function dependency]\label{prob:epsilon:bound}
    Let $\pi_{\textup{E}}$ be an optimal policy for the  optimal control problem $\MDP{\cost^\star}$ with cost $c^\star$ and let $u^\star$ be the corresponding optimal value function. 
    Under Assumption~\ref{assumption1}, if $u_1\equiv 1$ and $\theta>\frac{1}{(1-\gamma)\min\{1,d\}}$, then $\tilde{\varepsilon}\le\varepsilon_{\textup{approx}}$ with
    \begin{align}\label{eq:def:eps:approx}
        \varepsilon_{\textup{approx}}:=  &\left(\frac{2-\gamma}{1-\gamma}+\mathcal{D}_{\gamma,\theta}(2+\gamma)\max\{1,L_{\mathsf{P}},d\}\right)  \left( \min_{c\in\mathbf{C}_{n_c}}\lVert c^\star - c\rVert_{\textup{L}}+\min_{u\in\mathbf{U}_{n_u}}\lVert u^\star -u\rVert_{\textup{L}}\right),
    \end{align}
    where $d=\textup{dim}(\xxx\times\aaa)$ is the dimension of 
    $\xxx\times\aaa$,
    $\mathcal{D}_{\gamma,\theta}\triangleq\frac{2\theta(K_{c,\infty}+K_{u,\infty})}{(1-\gamma)^2\min\{1,d\}\theta+\gamma-1}$, with constants $K_{c,\infty}\triangleq\max_{i=1,\ldots,n_c}\norma{c_i}{\infty}$ and $K_{u,\infty}\triangleq\max_{j=1,\ldots,n_u}\norma{u_i}{\infty}$.
\end{proposition}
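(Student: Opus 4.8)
The strategy is to exhibit a feasible point of \eqref{RIP} whose objective value is at most $\varepsilon_{\textup{approx}}$, which then upper-bounds the optimum $\tilde\varepsilon$. The natural candidate is built from the best approximations $\hat c\in\mathbf{C}_{n_c}$ and $\hat u\in\mathbf{U}_{n_u}$ to the true cost $c^\star$ and true value function $u^\star=V^\star_{c^\star}$, respectively — but these two pieces do not in general satisfy the normalization constraint $\int_{\xxx\times\aaa}(\hat c-T_\gamma^*\hat u)\,\drv(x,a)=1$, nor even the norm bounds $\lVert\alpha\rVert_1\le\theta$, $\lVert\beta\rVert_1\le\theta$ tightly. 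So the first step is a rescaling/affine-correction step: using the hypothesis $u_1\equiv 1$, one can add a multiple of $u_1$ to $\hat u$ (which changes $T_\gamma^*\hat u$ by a constant, since $T_\gamma^*1=1-\gamma$) and scale $(\hat c,\hat u)$ by a common factor $\lambda$ so that the integral of $\hat c-T_\gamma^*\hat u$ over $\xxx\times\aaa$ equals exactly $1$. Here one uses that $c^\star-T_\gamma^*u^\star\ge 0$ with $\langle\ocop,c^\star-T_\gamma^*u^\star\rangle=0$ from Theorem~\ref{IP}, so the "ideal" object $c^\star-T_\gamma^*u^\star$ integrates to some nonnegative number, and the condition $\theta>\frac{1}{(1-\gamma)\min\{1,d\}}$ is exactly what guarantees there is enough room in the $\ell_1$-balls of radius $\theta$ to absorb both the correction and the rescaling; this is where the constant $\mathcal{D}_{\gamma,\theta}$ enters, quantifying how far the rescaled coefficients can drift.

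The second step is to estimate the two constraint residuals for this corrected pair $(\bar c,\bar u)$. For the pointwise Bellman-type residual $\bar c-T_\gamma^*\bar u$ on $\xxx\times\aaa$, I would write $\bar c-T_\gamma^*\bar u=(\bar c-c^\star)+(c^\star-T_\gamma^*u^\star)+T_\gamma^*(u^\star-\bar u)$, note the middle term is $\ge 0$, and bound the outer terms in sup-norm: $\lVert\bar c-c^\star\rVert_\infty\le\lVert\bar c-c^\star\rVert_{\textup{L}}$ and $\lVert T_\gamma^*(u^\star-\bar u)\rVert_\infty\le(1+\gamma)\lVert u^\star-\bar u\rVert_\infty$ or, using the Lipschitz-continuity of $T_\gamma^*$ via Assumption~\ref{assumption1}\ref{AA3}, $\le\max\{1,L_{\mathsf{P}}\}\cdot(1+\text{const})\lVert u^\star-\bar u\rVert_{\textup{L}}$ — this is where $\max\{1,L_{\mathsf{P}},d\}$ and the factor $(2+\gamma)$ surface. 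For the occupancy-measure residual $\langle\ocop,\bar c-T_\gamma^*\bar u\rangle$, I would similarly split around $c^\star-T_\gamma^*u^\star$ (whose $\ocop$-integral is $0$) and use $\langle\ocop,\cdot\rangle\le\frac{1}{1-\gamma}\lVert\cdot\rVert_\infty$ since $\langle\ocop,1\rangle=\frac{1}{1-\gamma}$; this produces the $\frac{2-\gamma}{1-\gamma}$ coefficient. The residuals must then be combined with the amplification caused by the rescaling in step one, and the two error terms $\min_{c\in\mathbf{C}_{n_c}}\lVert c^\star-c\rVert_{\textup{L}}$ and $\min_{u\in\mathbf{U}_{n_u}}\lVert u^\star-u\rVert_{\textup{L}}$ factored out, yielding the stated bound after taking $\varepsilon$ to be the larger of the two residuals (and verifying $\varepsilon\ge 0$, which is immediate).

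The third, bookkeeping step is to confirm that the corrected pair actually lies in $\mathbf{C}_{n_c}\times\mathbf{U}_{n_u}$: one must check that after adding the multiple of $u_1$ and rescaling by $\lambda$, the resulting coefficient vectors still have $\ell_1$-norm $\le\theta$. This is precisely the role of the hypothesis $\theta>\frac{1}{(1-\gamma)\min\{1,d\}}$ together with the definition of $\mathcal{D}_{\gamma,\theta}$; making this quantitative — i.e., showing the rescaling factor $\lambda$ is controlled by $\mathcal{D}_{\gamma,\theta}$ and that the constants $K_{c,\infty},K_{u,\infty}$ bound the sup-norms of the basis expansions — is the main obstacle, since it requires carefully tracking how the normalization constraint constrains $\lambda$ as a function of the (unknown, but bounded) integral of $c^\star-T_\gamma^*u^\star$. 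Once feasibility is established, the objective value of this point is the max of the two residuals, which is $\le\varepsilon_{\textup{approx}}$, and since $\tilde\varepsilon$ is the infimum over all feasible points, the proposition follows.
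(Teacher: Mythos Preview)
Your primal construction is a genuinely different route from the paper's proof, and it has a real gap at the point you yourself flag as ``the main obstacle.''

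The paper does \emph{not} build an explicit feasible point of \eqref{RIP}. Instead it first tightens \eqref{RIP} to a program $J_{n_{c,u}}$ in which the slack $-\varepsilon$ in the Bellman inequality is removed (so $c-T_\gamma^*u\ge 0$), observes that $\tilde\varepsilon\le J_{n_{c,u}}$, and then recognises $J_{n_{c,u}}$ as the finite-dimensional restriction of an infinite-dimensional conic LP $J$ with optimal value $0$ attained at $(c^\star,u^\star)$. The bound $J_{n_{c,u}}-J\le(\lVert\boldsymbol{l_0}\rVert_*+\mathcal{D}_{\gamma,\theta}\lVert\boldsymbol{\mathcal{A}}\rVert_{\textup{op}})\cdot(\text{projection residuals})$ is then read off directly from an abstract perturbation result for conic LPs \cite[Th.~3.3]{ref:Peyman-17}. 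The three constants are computed separately: $\lVert\boldsymbol{l_0}\rVert_*=\frac{2-\gamma}{1-\gamma}$ from the Wasserstein norms of $\ocop$ and $\nu_0$; $\lVert\boldsymbol{\mathcal{A}}\rVert_{\textup{op}}\le(2+\gamma)\max\{1,L_{\mathsf{P}},d\}$ from the Lipschitz assumption on $\mathsf{P}$; and $\mathcal{D}_{\gamma,\theta}$ from \cite[Prop.~3.2]{ref:Peyman-17} as an a-priori bound on the norm of the \emph{dual optimizer} of $J_{n_{c,u}}$.

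This last point is where your reading of the constant goes wrong. $\mathcal{D}_{\gamma,\theta}$ is not ``how far the rescaled coefficients can drift'' in a primal correction step; it is a dual bound, and the hypothesis $u_1\equiv1$ is used not to shift $\hat u$ by a constant but to lower-bound $\lVert\boldsymbol{\mathcal{A}}_{n_{c,u}}^*\boldsymbol{y}^\star\rVert$ via the identity $T_\gamma^*1=1-\gamma$, which feeds into the dual-norm estimate. Your rescaling idea runs into a genuine obstruction: the best approximants $\hat c,\hat u$ may already sit on the boundary $\lVert\alpha\rVert_1=\theta$, $\lVert\beta\rVert_1=\theta$, so multiplying by a factor $\lambda>1$ to hit the normalization $\int(\bar c-T_\gamma^*\bar u)=1$ would throw the coefficients out of the admissible balls, and there is no evident mechanism in your sketch to prevent this. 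The paper's dual route sidesteps the issue entirely, because infeasibility of the projected point is priced through the dual variable rather than repaired explicitly. A direct primal argument along your lines could conceivably yield \emph{some} upper bound on $\tilde\varepsilon$, but you have not shown --- and it is not clear --- that it would reproduce the specific form of $\varepsilon_{\textup{approx}}$ with the stated $\mathcal{D}_{\gamma,\theta}$.
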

For $p \in [1,\infty]$, we denote by $\lVert\cdot\rVert_{p}$  the $p$-norm in $\R^n$ and by $\boldsymbol{x\cdot y}$ the usual inner product.
\begin{proof}[Proof of Proposition~\ref{prob:epsilon:bound}]\label{app:proof:prop}
    We consider the following tightening of the semi-infinite convex program~(\ref{RIP}).
    \begin{align}
        J_{n_{c,u}}\triangleq & \left\{ \begin{array}{ll}
            \inf_{\alpha,\beta,\varepsilon} & \varepsilon\\
            \textup{s.t.}& \inner{\ocop}{c-T_{\gamma}^*u}{}\le\varepsilon,\\
            & c-T_{\gamma}^* u \ge 0,\,\,\text{on} \ \X\times\A, \\
            & \int_{\xxx\times\aaa} (c(x,a)-T_{\gamma}^*u(x,a)) \,\drv(x,a)= 1,\\
            & c\in\mathbf{C}_{n_c},u\in\mathbf{U}_{n_u},\varepsilon\geq 0
            \end{array} \right. \nonumber \\
            = &  \left\{ \begin{array}{ll}
            \inf_{\alpha,\beta} & \inner{\ocop}{c}{}-\inner{\nu_0}{u}{}\\
            \textup{s.t.}& c-T_{\gamma}^* u \ge 0,\,\,\text{on} \ \X\times\A, \\
            & \int_{\xxx\times\aaa} (c(x,a)-T_{\gamma}^*u(x,a)) \,\drv(x,a)= 1,\\
            & c\in\mathbf{C}_{n_c}, u\in\mathbf{U}_{n_u},
        \end{array} \right. \label{proof:Jn} 
    \end{align}
    where the last equality follows by using that $\inner{\ocop}{T_{\gamma}^*u}{} = \inner{T_{\gamma}\ocop}{u}{} = \inner{\nu_0}{u}{}$ and an epigraphic transformation.
    
    The assumption that  $u_1\equiv 1$ and $\theta>\frac{1}{(1-\gamma)\textup{leb}(\xxx\times\aaa)}$ ensures feasibility of the convex program~(\ref{proof:Jn}) and by Assumption~\ref{assumption1}~\ref{AA1} the feasibility set is compact and thus the optimal value is finite and is attained. Note that $\textup{leb}(\xxx\times\aaa)$ denotes the Lebesgue measure of $\xxx\times\aaa$.
    Moreover, since~(\ref{proof:Jn}) is a tightening of (\ref{RIP}) it holds that $\tilde{\varepsilon}\le J_{n_{c,u}}$.
    
    Consider the infinite-dimensional version of~(\ref{proof:Jn}),
    \begin{align}
        J \triangleq &  \left\{ \begin{array}{ll}
            \inf_{c,u} & \inner{\ocop}{c}{}-\inner{\nu_0}{u}{}\\
            \textup{s.t.}& c-T_{\gamma}^* u \ge 0,\,\,\text{on} \ \X\times\A, \\
            & \int_{\xxx\times\aaa} (c(x,a)-T_{\gamma}^*u(x,a)) \,\drv(x,a)=1,\\
            & c\in\operatorname{Lip}(\xxx\times\aaa), u\in\operatorname{Lip}(\xxx).
        \end{array} \right. \label{proof:J} 
    \end{align}
    By the characterization of the inverse feasibility set, we have that $J=0$ and $(c^\star,u^\star)$ is an optimal solution for~(\ref{proof:J})~\footnote{Without loss of generality we may assume that $\oloklirosi{\xxx\times\aaa}{}{(c^\star-T_\gamma^*u^\star)(x,a)}{(x,a)}$ since we can always rescale the optimal cost and value function by the same scale factor.}. 
    Note that~(\ref{proof:J}) can be expressed in the standard conic form
    \begin{align}  
        J = &  \left\{ \begin{array}{ll}
            \inf_{\boldsymbol{x}} & \inner{\boldsymbol{l_0}}{\boldsymbol{x}}{}\\
            \textup{s.t.}& \boldsymbol{\mathcal{A}x}-\boldsymbol{b_0}\in\boldsymbol{K}, \\
            & \boldsymbol{x}\in\boldsymbol{X},
        \end{array} \right.\label{proof:primal}
    \end{align}
    where
    \begin{itemize}
        \item $(\boldsymbol{X},\boldsymbol{L})$ is a dual pair of vector spaces and $\boldsymbol{l_0}\in\boldsymbol{L}$;
        \item  $(\boldsymbol{B},\boldsymbol{Y})$ is a dual pair of vector spaces and $\boldsymbol{b_0}\in\boldsymbol{B}$;
        \item  $\boldsymbol{K}$ is a positive cone in $\boldsymbol{B}$ and $\boldsymbol{K}^*$ is its dual cone in $\boldsymbol{Y}$, i.e., $$\boldsymbol{K}^*=\{\boldsymbol{y}\in\boldsymbol{Y}:\inner{\boldsymbol{y}}{\boldsymbol{b}}{}\geq 0,\,\,\forall \boldsymbol{b}\in\boldsymbol{B}\};$$
        \item $\boldsymbol{\mathcal{A}}:\boldsymbol{X}\rightarrow\boldsymbol{B}$ is linear and continuous with respect to the induced weak topologies.
    \end{itemize} 
    Indeed, this is the case if we introduce the following:
    \begin{eqnarray*}
        \boldsymbol{X}\triangleq\operatorname{Lip}(\xxx\times\aaa)\times\operatorname{Lip}(\xxx),&& \boldsymbol{L}\triangleq\mathcal{M}(\xxx\times\aaa)\times\mathcal{M}(\xxx),\\
        \boldsymbol{B}\triangleq\operatorname{Lip}(\xxx\times\aaa)\times\ar,&&\boldsymbol{Y}\triangleq\mathcal{M}(\xxx\times\aaa)\times\ar,\\
        \boldsymbol{K}\triangleq\operatorname{Lip}(\xxx\times\aaa)_+\times\{0\},&&\boldsymbol{K^*}\triangleq\mathcal{M}(\xxx\times\aaa)_+\times\ar,\\
        \boldsymbol{b_0}\triangleq(\boldsymbol{0},1),&&\boldsymbol{l_0}\triangleq(\ocop,-\nu_0),
    \end{eqnarray*}
    \begin{equation*}
    \boldsymbol{\mathcal{A}}(c,u)\triangleq\begin{bmatrix}\boldsymbol{\mathcal{A}_1}(c,u) \\ \boldsymbol{\mathcal{A}_2}(c,u)\end{bmatrix}\triangleq\begin{bmatrix}c-T_{\gamma}^*u \\ \oloklirosi{\xxx\times\aaa}{}{(c-T_{\gamma}^*u)(x,a)}{(x,a)} \end{bmatrix}.
    \end{equation*}
    On the pair $(\boldsymbol{X},\boldsymbol{C})$ we consider the norms
    \begin{equation*}
        \norma{\boldsymbol{x}}{}=\norma{(c,u)}{}\triangleq\max\{\norma{c}{\textup{L}},\norma{u}{\textup{L}}\},\,\,\boldsymbol{x}=(c,u)\in\boldsymbol{X},
    \end{equation*}
    \begin{eqnarray*}
        \norma{\boldsymbol{l}}{*}&=&\sup_{\lVert\boldsymbol{x}\rVert\le 1}\inner{\boldsymbol{l}}{\boldsymbol{x}}{}=\sup_{\lVert c\rVert_{\textup{L}}\le 1}\inner{l_1}{c}{}+\sup_{\lVert u\rVert_{\textup{L}}\le 1}\inner{l_2}{u}{}
        \\
        &=&\lVert l_1\rVert_{\textup{W}}+\lVert l_2\rVert_{\textup{W}},\,\,\boldsymbol{l}=(l_1,l_2)\in\boldsymbol{L},
    \end{eqnarray*}
    which are dual to each other. Similarly, on the pair $(\boldsymbol{B},\boldsymbol{Y})$ we consider the norms
    \begin{eqnarray*}
        \lVert(b_1,b_2)\rVert&=&\max\{\lVert b_1\rVert_{\textup{L}},|b_2|\},\\
        \lVert(y_1,y_2)\rVert_*&=&\lVert y_1\rVert_{\textup{W}}+|y_2|.
    \end{eqnarray*}
    With this notation in mind, by virtue of~\cite[Th. 3.3]{ref:Peyman-17} we have that
    \begin{eqnarray}
        \tilde{\varepsilon}&\le& J_{n_{c,u}}-\underbrace{J}_{=0}\nonumber\\
        &\le& (\lVert\boldsymbol{l_0}\rVert_*+\mathcal{D}_{\gamma,\theta}\norma{\boldsymbol{\mathcal{A}}}{\textup{op}}) \label{peyman}\\
        &&\quad( \lVert c^\star - \Pi_{\mathbf{C}_{n_c}}(c^\star)\rVert_{\textup{L}}+\lVert u^\star - \Pi_{\mathbf{U}_{n_u}}(u^\star)\rVert_{\textup{L}}),\nonumber
    \end{eqnarray}
    where $\mathcal{D}_{\gamma,\theta}$ is an upper bound of a dual optimizer of~(\ref{proof:Jn}) with respect to an appropriately defined dual norm, and $\norma{\boldsymbol{\mathcal{A}}}{\textup{op}}$ is the operator norm of $\boldsymbol{\mathcal{A}}$. 
    We will next compute the involved quantities in~(\ref{peyman}).
    
    We have 
    \begin{equation}\label{l0}
        \lVert\boldsymbol{l_0}\rVert_*=\lVert\ocop\rVert_{\textup{W}}+\lVert\nu_0\rVert_{\textup{W}}=\frac{1}{1-\gamma}+1.
    \end{equation}
    Next note that
    \begin{eqnarray*}
        \norma{\mathsf{P}u}{\textup{L}}&=&\norma{\mathsf{P}u}{\infty}+|\mathsf{P}u|_{\textup{L}}\le\norma{u}{\infty}+L_{\mathsf{P}}|u|_{\textup{L}}\\
        &\le&\max\{1,L_{\mathsf{P}}\}\norma{u}{\textup{L}},
    \end{eqnarray*}
    where in the first inequality we used that $\mathsf{P}$ is a stochastic kernel and Assumption~\ref{assumption1}~\ref{AA3}. 
    Therefore,
    \begin{eqnarray*}
        \norma{\boldsymbol{\mathcal{A}_1}(c,u)}{\textup{L}}&=&\norma{c-u+\mathsf{P}u}{\textup{L}}\\
        &\le& (2+\gamma\max\{1,L_{\mathsf{P}}\})\norma{(c,u)}{}.
    \end{eqnarray*}
    Moreover,
    \begin{eqnarray*}
        |\boldsymbol{\mathcal{A}_2}(c,u)|&\le&(\norma{c}{\infty}+(1+\gamma)\norma{u}{\infty})\textup{dim}(\xxx\times\aaa)\\
        &\le&(2+\gamma)\textup{dim}(\xxx\times\aaa)\norma{(c,u)}{}.
    \end{eqnarray*}
    All in all,
    \begin{eqnarray}
        \norma{\boldsymbol{\mathcal{A}}}{\textup{op}}&\triangleq&\sup_{\norma{(c,u)}{}\le 1}\norma{\boldsymbol{\mathcal{A}}(c,u)}{} \nonumber\\
        &\le& \max\{(2+\gamma)\textup{dim}(\xxx\times\aaa),2+\gamma\max\{1,L_{\mathsf{P}}\}\} \nonumber\\
        &\le& (2+\gamma)\max\{1,L_{\mathsf{P}},\textup{dim}(\xxx\times\aaa)\} \label{op}.
    \end{eqnarray}
    It remains to compute the constant $\mathcal{D}_{\gamma,\theta}$.
    To this aim, let $\{\boldsymbol{x}_i\}_{i=1}^{n_c+n_u}$ be basis elements in $\boldsymbol{X}$ given by $\boldsymbol{x}_i=(c_i,0)$, for $i=1,\ldots,n_c$ and $\boldsymbol{x}_i=(0,u_i)$, for $i=n_c+1,\ldots,n_c+n_u$. 
    These are linearly independent by assumption. Then, we define the linear operator $\boldsymbol{\mathcal{A}_{n_{c,u}}}:\ar^{n_c+n_u}\rightarrow\boldsymbol{Y}$ by $\boldsymbol{\mathcal{A}_{n_{c,u}}}(\boldsymbol{\rho})=\athroisma{i}{1}{n_c+n_u}{\rho_i\boldsymbol{\mathcal{A}x_i}}=\athroisma{i}{1}{n_c}{\alpha_i\boldsymbol{\mathcal{A}x_i}}+\athroisma{i}{n_c+1}{n_c+n_u}{\beta_i\boldsymbol{\mathcal{A}x_i}}$,
    for $\boldsymbol{\rho}=(\alpha,\beta)\in\ar^{n_c+n_u}$. Then, it is easy to see that its adjoint $\boldsymbol{\mathcal{A}^*_{n_{c,u}}}:\boldsymbol{Y}\rightarrow\ar^{n_c+n_u}$ is given by $\boldsymbol{\mathcal{A}^*_{n_{c,u}}}(\boldsymbol{y})=[\inner{\boldsymbol{\mathcal{A}x_1}}{\boldsymbol{y}}{},\ldots,\inner{\boldsymbol{\mathcal{A}x_{n_c+n_u}}}{\boldsymbol{y}}{}]$. 
    On $\ar^{n_c+n_u}$ we consider the norm
    \begin{equation*}
        \norma{\boldsymbol{\rho}}{\mathcal{R}}=\norma{(\alpha,\beta)}{\mathcal{R}}\triangleq\max\{\lVert\alpha\rVert_1,\lVert\beta\rVert_1\}
    \end{equation*}
    Moreover, we set 
    \begin{equation*}
    \boldsymbol{\tilde{l}_0}\triangleq[\underbrace{\inner{\ocop}{c_1}{},\ldots,\inner{\ocop}{c_{n_c}}{}}_{=\tilde{l}_{0,1}},\underbrace{\inner{-\nu_0}{u_1}{},\ldots,\inner{-\nu_0}{u_{n_u}}{}}_{=\tilde{l}_{0,2}}]
    \end{equation*}
    Then, the semi-infinite convex program~(\ref{proof:Jn}) can be written in the form
    \begin{align}
        J_{n_{c,u}} = &  \left\{ \begin{array}{ll}
            \inf_{\boldsymbol{\rho}} & \boldsymbol{\tilde{l}_0\cdot\rho}\\
            \textup{s.t.}& \boldsymbol{\mathcal{A}_{n_{c,u}}\rho}-\boldsymbol{b_0}\in\boldsymbol{K}, \\
            & \norma{\boldsymbol{\rho}}{\mathcal{R}}\le\theta,\,\, \boldsymbol{\rho}\in\ar^{n_c+n_u}.
        \end{array} \right. \label{proof:si} 
    \end{align}

    Dualizing the conic inequality constraint in~(\ref{proof:si}) and using the dual norm definition, we get its dual
    \begin{align}
        \tilde{J}_{n_{c,u}} = &  \left\{ \begin{array}{ll}
            \sup_{\boldsymbol{y\in Y}} & \boldsymbol{\inner{b_0}{y}{}}-\theta\lVert\boldsymbol{\mathcal{A}^*_{n_{c,u}}y-\tilde{l}_0}\rVert_{\mathcal{R}^\star}\\
            \textup{s.t.}& \boldsymbol{y\in K^*}.
        \end{array} \right.\label{proof:sid} 
    \end{align}
        
    Let $\boldsymbol{y^\star}$ be a dual optimizer for~(\ref{proof:si}). 
    Assume that there exists a constant $C>0$ such that 
    \begin{equation*}
        \lVert\boldsymbol{\mathcal{A}_{n_{c,u}}^*y^\star}\rVert_{\mathcal{R}^*}\geq C\,\lVert\boldsymbol{y^\star}\rVert_*.
    \end{equation*}
    Then by virtue of~\cite[Prop. 3.2]{ref:Peyman-17}, we have the bound
    \begin{equation}\label{dg}
        \lVert\boldsymbol{y^\star}\rVert_*\le\frac{2\theta\lVert\boldsymbol{\tilde{l}_0}\rVert_{R^*}}{C\theta-\lVert\boldsymbol{b_0}\rVert}
        \le\mathcal{D}_{\gamma,\theta}.
    \end{equation}
    To compute the constant $\mathcal{D}_{\gamma,\theta}$, we need to bound the involved quantities in~(\ref{dg}).
    
    We will first show that $y_2^\star\geq 0$. 
    By Sion's minimax Theorem~\cite{Sion:1958} the duality gap between~(\ref{proof:si}) and~(\ref{proof:sid}) is zero, i.e., $J_{n_{c,u}}=\tilde{J}_{n_{c,u}}$. 
    Note however that by construction $J_{n_{c,u}}\geq 0$, since for any feasible $\boldsymbol{\rho}$ to~(\ref{proof:si}) it holds that $\boldsymbol{\tilde{l}_0\,\cdot}\,\boldsymbol{\rho}=\inner{\ocop}{\boldsymbol{\mathcal{A}_{n_{c,u}}\rho}-\boldsymbol{b_0}}{}\geq 0$.
    Then,
    \begin{align*}
        0\le J_{n_{c,u}}=\tilde{J}_{n_{c,u}}&=\boldsymbol{\inner{b_0}{y^\star}{}}-\theta\lVert\boldsymbol{\mathcal{A}^*_{n_{c,u}}y^\star-\tilde{l}_0}\rVert_{\mathcal{R}^\star}\\
        &=y_2^\star-\theta\lVert\boldsymbol{\mathcal{A}^*_{n_{c,u}}y^\star-\tilde{l}_0}\rVert_{\mathcal{R}^\star}.
    \end{align*}
    Thus, $y_2^\star\geq 0$. Therefore,
    \begin{eqnarray*}
        \lVert\boldsymbol{\mathcal{A}_{n_{c,u}}^*y^\star}\rVert_{\mathcal{R}^*}&\geq& |\inner{\boldsymbol{\mathcal{A}x_{n_c+1}}}{\boldsymbol{y^\star}}{}|=|\inner{\boldsymbol{A}(0,u_1)}{\boldsymbol{y^\star}}{}| \\
        &=&|\inner{T_{\gamma}^*u_1}{y_1^\star}{}+y_2^\star\oloklirosi{\xxx\times\aaa}{}{T_{\gamma}^*u_1}{(x,a)}|\\
        &=&(1-\gamma)\norma{y_1^\star}{\textup{W}}+(1-\gamma)\textup{dim}(\xxx\times\aaa)|y_2^\star|\\
        &\geq&\underbrace{(1-\gamma)\min\{1,\textup{dim}(\xxx\times\aaa)\}}_{\triangleq C}\lVert\boldsymbol{y^\star}\rVert_*,
    \end{eqnarray*}
    where we used that $u_1\equiv 1$, $y_1^\star\in\mathcal{M}(\xxx\times\aaa)_+$ and so $\lVert y_1^\star\rVert_{\textup{W}}=y_1^\star(\xxx\times\aaa)$, and $y_2^\star\geq 0$.
    
    In addition, a direct computation gives, 
    \begin{equation*}
        \lVert\boldsymbol{\tilde{l}_0}\rVert_{R^*}=\sup_{\lVert\boldsymbol{\rho}\rVert_{\mathcal{R}} \le 1}\boldsymbol{\tilde{l}_0\cdot\rho}=\lVert\tilde{l}_{0,1}\rVert_{\infty}+\lVert\tilde{l}_{0,2}\rVert_{\infty}\le \frac{K_{c,\infty}}{1-\gamma}+K_{u,\infty}.
    \end{equation*}

    Putting them all together in~(\ref{dg}), we get
    \begin{equation}\label{dg2}
        \lVert\boldsymbol{y^\star}\rVert_*\le\frac{2\theta\lVert\boldsymbol{\tilde{l}_0}\rVert_{R^*}}{C\theta-\lVert\boldsymbol{b_0}\rVert}
        \le\frac{2\theta(K_{c,\infty}+K_{u,\infty})}{(1-\gamma)^2\min\{1,d\}\theta+\gamma-1}\triangleq\mathcal{D}_{\gamma,\theta},
    \end{equation}
    where we used that $\lVert\boldsymbol{b_0}\rVert=1$. A combination of~(\ref{peyman}),~(\ref{l0}),~(\ref{op}) and~(\ref{dg2}) ends the proof.
\end{proof}
The regularizer helps bound the dual optimizer using a dual norm, thus offering an explicit approximation error for the proposed solution. Proposition~\ref{prob:epsilon:bound} sheds light on how the choice of basis functions and the regularization parameter $\theta$ influence the approximation error.
In particular note that when $c^\star\in\mathbf{C}_{n_c}$ and $u^\star\in\mathbf{U}_{n_u}$, then the corresponding projection residuals are $0$, and thus $\tilde{\varepsilon}=0$ as expected. 
Moreover, if we choose linearly dense bases in $\textup{Lip}(\xxx\times\aaa)$ and $\textup{Lip}(\xxx)$, then the projection residuals and so $\tilde{\varepsilon}$ tend to $0$ as $n_c$ and $n_u$ and the regularization parameter $\theta$ tend to infinity.
In a practical setting, observing a large value of $\tilde{\varepsilon}$ is an indicator that more basis functions are needed.
	
Computationally tractable approximations to the semi-infinite convex program can be obtained through the \emph{scenario approach}~\cite{ref:Campi-08,ref:Campi-09} in which randomization over the set of constraints is considered. 
In particular, we treat the parameter $(x,a)$ as an uncertainty parameter living in the space $\xxx\times\aaa$. 
Let $\mathds{P}$ be a Borel probability measure on $(\xxx\times\aaa, \mathcal{B}(\xxx\times\aaa))$, where $\xxx\times\aaa$ is equipped with the norm $\lVert(x,a)\rVert=\norma{x}{2}+\norma{a}{2}$.  
We assume that $\mathds{P}$ has the following structure.
\begin{assumption}[Sampling distribution]\label{assumption2}
    There exists $g:\mathds{R}_+\rightarrow [0,1]$ strictly increasing, such that $\mathds{P}(B_r(x,a))>g(r)$, for all $(x,a)\in\xxx\times\aaa$ and $r>0$.
\end{assumption}
Assumption~\ref{assumption2} is a sufficient structural assumption concerning the sample distribution $\mathds{P}$ ensuring that the gap between the robust program (\ref{RIP}) and its sampled counterpart (\ref{SIP}) can be controlled. It implicitly restricts the state and action spaces to be bounded.
 
Let $\{(x^{(\ell)},a^{(\ell)})\}_{\ell=1}^N$ be independent and identically distributed (i.i.d.) samples drawn from $\xxx\times\aaa$ according to $\mathds{P}$. We are interested in the following random finite-dimensional convex program:
\begin{align*}\label{SIP}
     & \left\{ \begin{array}{ll}
         \inf_{\alpha,\beta,\varepsilon} & \varepsilon\\
         \textup{s.t.}& \inner{\ocop}{c-T_{\gamma}^*u}{}\le\varepsilon,\\
         & c(x^{(\ell)},a^{(\ell)})-T_{\gamma}^* u(x^{(\ell)},a^{(\ell)})\geq-{\varepsilon},\,\,\forall\, \ell=1,\ldots N, \\
         & \textcolor{black}{\int_{\xxx\times\aaa} (c(x,a)-T_{\gamma}^*u(x,a)) \,\drv(x,a)=1},\\
         & c\in\mathbf{C}_{n_c},u\in\mathbf{U}_{n_u},\varepsilon\geq 0.
     \end{array}\right.\tag{\color{blue}\textup{SIP}$_{\mathbf{N}}$}
\end{align*}
Notice that (\ref{SIP}) naturally represents a randomized program as it depends on the random multi-sample  $\{(x^{(i)},a^{(i)})\}_{i=1}^N$. 
We assume the following measurability assumption holds for our analysis.
\begin{assumption}\label{assumption4}
    The (\ref{SIP}) optimizer generates a Borel measurable mapping  that associates each multi-sample $\{(x^{(\ell)},a^{(\ell)})\}_{\ell=1}^N$ to a uniquely defined optimizer $(\tilde{\alpha}_N,\tilde{\beta}_N,\tilde{\varepsilon}_N)$. 
\end{assumption}
To ensure uniqueness when multiple solutions exist, use a \emph{tie-break rule}, such as selecting the solution with the minimum $\norma{\cdot}{2}$ norm.
It has been shown \cite[Proposition 3.10]{ref:Mohajerin-13} that applying such a tie-break-rule also ensures the measurability in Assumption~\ref{assumption4}.
 
The appealing feature of  (\ref{SIP}) is that it is a convex finite-dimensional program and so it can be solved at low computational cost for small enough $N$. 
We study how many samples are needed for a \emph{good solution} by examining the \emph{generalization properties} of the optimizer $(\tilde{\alpha}_N,\tilde{\beta}_N,\tilde{\varepsilon}_N)$ and the extracted cost function $\tilde{c}N=\athroisma{i}{1}{n_c}{{\tilde{\alpha}{N_i}} c_i}$.
For each $n\in\mathds{N}$, $\epsilon\in(0,1)$ and $\delta\in(0,1)$, we define 
 \[
 \textup{N}(n, \eps, \delta) = \min  \Big\{\! N  \in \mathds{N} : \sum_{i=0}^{n}   {N \choose i} \eps^{i}(1-\eps)^{N-i}\leq\delta \Big\}.
 \]
The sample size above asymptotically scales as $\sim \{1/\eps, \  \log(1/\delta),\  n\}$, see~\cite{ref:Campi-08}.
The following Theorem determines the sample complexity of (\ref{SIP}). 
In particular, for a given reliability parameter $\epsilon\in(0,1)$ and confidence level $\delta\in(0,1)$, we establish how many samples are needed to guarantee with confidence at least $1-\delta$ that $\tilde{c}_N\in\mathcal{C}^{\varepsilon_{\textup{approx}}+\epsilon}(\expert)$.
\begin{theorem}[Scenario program guarantees]\label{scenario}
    Let $\epsilon,\delta\in(0,1)$. Under Assumptions~\ref{assumption1}, \ref{assumption2} and \ref{assumption4}, if $u_1\equiv 1$ and $\theta>\frac{1}{(1-\gamma)\textup{dim}(\xxx\times\aaa)}$, then for $N\geq \textup{N}(n_c+n_u+1, g(\frac{\epsilon}{L_{\Lambda}}), \delta)$, where $L_{\Lambda} \triangleq \theta \sqrt{n_c}L_c+\theta\sqrt{n_u} (L_u L_{\mathsf{P}}+ L_u)$, with probability at least $1-\delta$, $\tilde{c}_N\in\mathcal{C}^{\tilde\varepsilon_N+\epsilon}(\expert)$.
\end{theorem}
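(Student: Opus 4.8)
The plan is to combine the scenario approach of Campi and Garatti~\cite{ref:Campi-08} with a Lipschitz-continuity argument in the spirit of~\cite{ref:Mohajerin-13}, which upgrades a probabilistic bound on constraint violation into a deterministic robust-feasibility statement with a controlled margin, and then to read off membership in $\mathcal{C}^{\tilde\varepsilon_N+\epsilon}(\expert)$ directly from Definition~\ref{defofinv}. The sampled (\ref{SIP}) has $n_c+n_u+1$ decision variables $(\alpha,\beta,\varepsilon)$, only the $N$ pointwise constraints are randomized, and the averaged constraint and the normalization equality are fixed; the whole point is to control the violation of the pointwise constraint by the scenario optimizer and translate it into a uniform $-(\tilde\varepsilon_N+\epsilon)$ lower bound.

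First I would set up the scenario framework. Program (\ref{SIP}) is convex in $(\alpha,\beta,\varepsilon)$: the objective is linear, the averaged constraint $\inner{\ocop}{c-T_{\gamma}^*u}{}\le\varepsilon$, the sampled constraints $c(x^{(\ell)},a^{(\ell)})-T_{\gamma}^*u(x^{(\ell)},a^{(\ell)})\ge-\varepsilon$ and the normalization equality are all affine in $(\alpha,\beta,\varepsilon)$, and $\mathbf{C}_{n_c}$, $\mathbf{U}_{n_u}$, $\{\varepsilon\ge0\}$ are convex. Under $u_1\equiv 1$ and $\theta>\frac{1}{(1-\gamma)\textup{dim}(\xxx\times\aaa)}$ the program is feasible (exactly as in the proof of Proposition~\ref{prob:epsilon:bound}, since (\ref{SIP}) relaxes the corresponding tightened program), Assumption~\ref{assumption1}~\ref{AA1} makes the feasible set bounded, and Assumption~\ref{assumption4} together with the tie-break rule provides a measurable, uniquely defined optimizer $(\tilde\alpha_N,\tilde\beta_N,\tilde\varepsilon_N)$; write $\tilde u_N\triangleq\sum_{i=1}^{n_u}\tilde\beta_{N,i}u_i\in\mathbf{U}_{n_u}$. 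Since the un-sampled averaged and normalization constraints merely restrict the feasible set, the number of support constraints is bounded by the decision dimension $n_c+n_u+1$, so the scenario theorem (with the measurability supplied by~\cite[Prop.~3.10]{ref:Mohajerin-13}) gives: for every $\eta\in(0,1)$, if $N\ge\textup{N}(n_c+n_u+1,\eta,\delta)$, then with probability at least $1-\delta$ one has $\mathds{P}\big(\{(x,a):\tilde c_N(x,a)-T_{\gamma}^*\tilde u_N(x,a)<-\tilde\varepsilon_N\}\big)\le\eta$.

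Next I would bound the Lipschitz modulus of the sampled constraint in order to fix $\eta$. Using Assumption~\ref{assumption1}~\ref{AA3}--\ref{AA4}, the identity $T_{\gamma}^*u=u-\gamma\mathsf{P}u$, the estimate $|T_{\gamma}^*u|_{\textup{L}}\le(1+\gamma L_{\mathsf{P}})|u|_{\textup{L}}$, and the $\ell_1$-ball bounds $\lVert\alpha\rVert_1,\lVert\beta\rVert_1\le\theta$ (together with a Cauchy--Schwarz estimate on $\sum_i\beta_i(u_i(x)-u_i(x'))$), a direct computation shows that $(x,a)\mapsto c(x,a)-T_{\gamma}^*u(x,a)$ is $L_{\Lambda}$-Lipschitz on $\xxx\times\aaa$, uniformly over $c\in\mathbf{C}_{n_c}$ and $u\in\mathbf{U}_{n_u}$, with $L_{\Lambda}=\theta\sqrt{n_c}L_c+\theta\sqrt{n_u}(L_uL_{\mathsf{P}}+L_u)$. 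Now take $\eta=g(\epsilon/L_{\Lambda})$, so $N\ge\textup{N}(n_c+n_u+1,g(\epsilon/L_{\Lambda}),\delta)$ suffices, and work on the $1-\delta$ event above. If there were a point $(\bar x,\bar a)$ with $\tilde c_N(\bar x,\bar a)-T_{\gamma}^*\tilde u_N(\bar x,\bar a)<-\tilde\varepsilon_N-\epsilon$, then by the $L_{\Lambda}$-Lipschitz bound every $(x,a)\in B_{\epsilon/L_{\Lambda}}(\bar x,\bar a)$ would satisfy $\tilde c_N(x,a)-T_{\gamma}^*\tilde u_N(x,a)<-\tilde\varepsilon_N$; but Assumption~\ref{assumption2} forces $\mathds{P}(B_{\epsilon/L_{\Lambda}}(\bar x,\bar a))>g(\epsilon/L_{\Lambda})=\eta$, contradicting the scenario bound. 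Hence $\tilde c_N-T_{\gamma}^*\tilde u_N\ge-(\tilde\varepsilon_N+\epsilon)$ everywhere on $\xxx\times\aaa$.

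Finally I would assemble the pieces. On the same $1-\delta$ event the optimizer of (\ref{SIP}) satisfies the averaged constraint $\inner{\ocop}{\tilde c_N-T_{\gamma}^*\tilde u_N}{}\le\tilde\varepsilon_N\le\tilde\varepsilon_N+\epsilon$, while the step above gives $\tilde c_N-T_{\gamma}^*\tilde u_N\ge-(\tilde\varepsilon_N+\epsilon)$ pointwise; since $\tilde c_N\in\mathbf{C}_{n_c}\subset\operatorname{Lip}(\xxx\times\aaa)$ and $\tilde u_N\in\mathbf{U}_{n_u}\subset\operatorname{Lip}(\xxx)$, Definition~\ref{defofinv} (with parameter $\tilde\varepsilon_N+\epsilon$ and certificate $\tilde u_N$) yields $\tilde c_N\in\mathcal{C}^{\tilde\varepsilon_N+\epsilon}(\expert)$ with probability at least $1-\delta$. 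I expect the main obstacle to be twofold: first, verifying the hypotheses of the scenario theorem — convexity, feasibility, and the non-degeneracy/measurability coming from the tie-break rule and Assumption~\ref{assumption4} — so that the support-constraint count is governed by the decision dimension $n_c+n_u+1$ despite the presence of the un-sampled constraints; and second, pinning down $L_{\Lambda}$, where one must carefully track how the $\ell_1$-regularization, the adjoint operator $T_{\gamma}^*$, and the Lipschitz bounds of Assumption~\ref{assumption1} combine, so that the ball radius $\epsilon/L_{\Lambda}$ is exactly what the growth condition in Assumption~\ref{assumption2} can absorb.
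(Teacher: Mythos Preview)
Your proposal is correct and follows essentially the same route as the paper: invoke the Campi--Garatti scenario bound with decision dimension $n_c+n_u+1$, then use uniform Lipschitz continuity of the constraint function together with Assumption~\ref{assumption2} to upgrade the chance-constraint feasibility to robust feasibility with margin $\epsilon$, and read off membership in $\mathcal{C}^{\tilde\varepsilon_N+\epsilon}(\expert)$ from Definition~\ref{defofinv}. The only cosmetic difference is that the paper packages the ``ball argument'' by citing \cite[Lemma~3.2, Prop.~3.8]{ref:Mohajerin-13} (passing through the intermediate programs ${\color{blue}\textup{CCP}_\epsilon}$ and ${\color{blue}\textup{IP}_\zeta}$), whereas you unpack that lemma explicitly; the content is identical.
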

The symbol $\models$ refers to feasibility satisfaction, i.e., $x\models\mathcal{R}$ means that $x$ is a feasible solution for the program $\mathcal{R}$.
\begin{proof}[Proof of Theorem~\ref{scenario}]\
    The proof is a consequence of~\cite[Theorem 1]{ref:Campi-08}, \cite[Lemma 3.2]{ref:Mohajerin-13} and  \cite[Proposition 3.2]{ref:Mohajerin-13}. 
    Let us denote the optimization variable of (\ref{SIP}) by $z=(\alpha,\beta,\varepsilon)\in\mathds{R}^{n_c+n_u+1}$, where $\alpha=(\alpha_1,\ldots,\alpha_{n_c})$ and $\beta=(\beta_1,\ldots,\beta_{n_u})$. 
    Let $\lambda=(x,a)$ be the uncertainty parameter and $\Lambda=\xxx\times\aaa$ the uncertainty set. 
    Consider the function 
    \begin{equation*}
        f(z,\lambda)=-\athroisma{i}{1}{n_c}{\alpha_i c_i(x,a)}+\athroisma{j}{1}{n_u}{\beta_j(u_j(x)-\gamma \mathsf{P}u_j(x,a))}-\varepsilon
    \end{equation*}
    and  the set
    \begin{align*}
        \mathcal{Z}&=\bigg\{z=(\alpha,\beta,\varepsilon)\in\mathds{R}^{n_c+n_u+1}:\norma{\alpha}{2}\le\theta,\norma{\beta}{2}\le\theta, \\
       &\qquad\qquad \inner{\ocop}{-f(z,\cdot)-\varepsilon}{}\le\varepsilon, \int_{\Lambda} (-f(z,\lambda)+\varepsilon) d\lambda =1, \varepsilon\geq 0 \bigg\}.
    \end{align*}
    Note that $\mathcal{Z}\subset\mathds{R}^{n_c+n_u+1}$ is convex and compact and independent of $\lambda\in\Lambda$. 
    We show that the set $\mathcal{Z}$ is nonempty.
    As noted in the proof of Proposition~\ref{prob:epsilon:bound} the assumption that  $u_1\equiv 1$ and $\theta>\frac{1}{(1-\gamma)\textup{leb}(\xxx\times\aaa)}$ considered in Proposition~\ref{prob:epsilon:bound} and Theorems~\ref{scenario}-\ref{lastone} ensures the feasibility of the convex program \eqref{RIP} which in particular implies that $\mathcal{Z}\ne\emptyset$. Here $\textup{leb}(\mathcal{X}\times\mathcal{A})$ denotes the Lebesgue measure of $\mathcal{X}\times\mathcal{A}$.

    Note that the function  $f:\mathcal{Z}\times\Lambda\rightarrow\mathds{R}$ is measurable, and linear in the first argument for all $\lambda\in\Lambda$. 
    Moreover, by Assumption~\ref{assumption1}~\ref{AA1}-\ref{AA3}, we get that $f$ is bounded in the second argument for all $z\in\mathcal{Z}$, and $f$ is Lipschitz continuous on $\Lambda$ uniformly in $\mathcal{Z}$ with Lipschiz constant
    \begin{equation*}
        L_{\Lambda} \triangleq \theta \sqrt{n_c}L_c+\theta\sqrt{n_u} (L_u L_{\mathsf{P}}+ L_u).
    \end{equation*}
    After introducing this notation, one can see that the random convex program (\ref{SIP}) can be written as,
    \begin{align*}
        {\color{blue}\textup{SIP}_{\mathbf{N}}}: & \left\{ \begin{array}{ll}
            \inf_{z\in\mathcal{Z}} & h^\top z\\
            \textup{s.t.}& f(z,\lambda^{(\ell)})\le 0,\,\forall \ell=1,\ldots,N,
        \end{array} \right.
    \end{align*}
    where $h=(\mathbf{0}_{\ar^{n_c}},\mathbf{0}_{\ar^{n_u}},1)$ and the multisample $\{\lambda^{(i)}=(x^{(\ell)},a^{(\ell)})\}_{\ell=1}^N$ is a random element on the product probability space $(\Lambda^N,\mathcal{B}(\Lambda)^N,\mathds{P}^N)$. 
    Moreover, (\ref{SIP}) is the scenario counterpart of~(\ref{RIP}),
    \begin{align*}
        {\color{blue}\textup{IP}}: & \left\{ \begin{array}{ll}
            \inf_{z\in\mathcal{Z}} & h^\top z\\
            \textup{s.t.}& f(z,\lambda)\le 0,\,\forall \lambda\in\Lambda,
        \end{array} \right.
    \end{align*}
    where one enforces constraint satisfaction for all the realizations of the uncertainty. 
    Clearly if $z=(\alpha,\beta,\varepsilon)\models {\color{blue}\textup{IP}}$, then the associated cost function $c=\athroisma{i}{1}{n_c}{\alpha_i c_i}\in\mathcal{C}^{\varepsilon}(\pi_{\textup{E}})$.
    
    For a fixed reliability level $\epsilon\in (0,1)$, the associated chance-constrained program is given by
    \begin{align*}
        {\color{blue}\textup{CCP}_{\mathbf{\epsilon}}}: & \left\{ \begin{array}{ll}
            \inf_{z\in\mathcal{Z}} & h^\top z\\
            \textup{s.t.}& \mathds{P}[\lambda\in\Lambda:\,f(z,\lambda)\le 0]\geq 1-\epsilon,
        \end{array} \right.
    \end{align*}
    where one allows constraint violation with low probability. 
    Now the assumption that $u_1\equiv 1$ and $\theta>\frac{1}{(1-\gamma)\textup{dim}(\xxx\times\aaa)}$ is sufficient for the existence of a Slater point for the robust convex program~(\ref{RIP}), i.e., there exists $z_0\in\mathcal{Z}$ such that $\sup_{\lambda\in\Lambda}f(z_0,\lambda)<0$. 
    By this fact and by Assumption~\ref{assumption4}, we can apply~\cite[Theorem 1]{ref:Campi-08} and conclude that  for $N\geq \textup{N}(n_c+n_u+1, \epsilon, \delta)$ it holds that
    \begin{equation}\label{Campi}
        \mathds{P}^N[\tilde{z}_N\models {\color{blue}\textup{CCP}_{\mathbf{\epsilon}}}]\geq 1-\delta,
    \end{equation}
    where $\tilde{z}_N=(\tilde{\alpha}_N,\tilde{\beta}_N,\tilde{\varepsilon}_N)$ is the optimizer of (\ref{SIP}). 
    Note that by Assumption~\ref{assumption4}, the optimizer $\tilde{z}_N$ is uniquely defined and is a $\mathcal{Z}$-valued random variable on $(\Lambda^N,\mathcal{B}(\Lambda)^N,\mathds{P}^N)$.
    
    Consider now the $\zeta$-perturbed robust convex program for some $\zeta>0$,
    \begin{align*}
        {\color{blue}\textup{IP}_{\mathbf{\zeta}}}: & \left\{ \begin{array}{ll}
            \inf_{z\in\mathcal{Z}} & h^\top z\\
            \textup{s.t.}& f(z,\lambda)\le \ \zeta,\,\forall \lambda\in\Lambda.
        \end{array} \right.
    \end{align*}
    Note that due to the min-max structure of~(\ref{RIP}), the mapping from $\zeta$ to the optimal value of $ {\color{blue}\textup{IP}_{\mathbf{\zeta}}}$ is Lipschitz continuous with Lipschitz constant $1$~\cite[Remark 3.5]{ref:Mohajerin-13}.
    
    We have the following implication,
    \begin{equation}\label{l2}
        z=(\alpha,\beta,\varepsilon)\models {\color{blue}\textup{IP}_{\mathbf{\zeta}}}\Rightarrow c=\athroisma{i}{1}{n_c}{\alpha_i c_i}\in\mathcal{C}^{{\varepsilon}+\zeta}(\expert).
    \end{equation}
    Moreover, under Assumption~\ref{assumption2} and since $f(z,\cdot)$ is Lipschitz continuous on $\Lambda$ uniformly in $\mathcal{Z}$ with Lipschitz constant $L_\Lambda$, by virtue of~\cite[Lemma 3.2]{ref:Mohajerin-13} and ~\cite[Proposition 3.8]{ref:Mohajerin-13}, we have that
    \begin{equation}\label{l3}
        z\models{\color{blue}\textup{CCP}_{\mathbf{\epsilon}}}\Rightarrow z\models {\color{blue}\textup{IP}_{\mathbf{h(\epsilon)}}},
    \end{equation}
    where
    \begin{equation}\label{l4}
        h(\epsilon)\triangleq L_{\Lambda}g^{-1}(\epsilon).
    \end{equation}

    Combining~(\ref{Campi}), (\ref{l2}), (\ref{l3}) and (\ref{l4}) we get that for $N\geq \textup{N}(n_c+n_u+1, g(\frac{\epsilon}{L_{\Lambda}}), \delta)$,
    \begin{equation*}
        \mathds{P}^N[\tilde{c}_N\in\mathcal{C}^{\tilde{\varepsilon}+\epsilon}(\expert)]\geq 1-\delta.
    \end{equation*}
    Finally notice that since~(\ref{SIP}) is a relaxation of~(\ref{RIP}) we have $\tilde\varepsilon\le\varepsilon_{\textup{approx}}$ with probability 1.
\end{proof} 

Theorem~\ref{scenario} can be converted into an a-priori bound by stating that under the same assumptions with probability at least $1-\delta$ we have $\tilde{c}_N\in\mathcal{C}^{\varepsilon_{\textup{approx}}+\epsilon}(\expert)$, which follows directly from Proposition~\ref{prob:epsilon:bound}.

\begin{remark}[Curse of dimensionality] \label{rem:curse:of:dim}\textup{
    As shown in~\cite{ref:Mohajerin-13},  the function $g(r)$ is of order $r^{\textup{dim}(\xxx\times\aaa)}$. 
    As a result, the number of samples grows exponentially as $\epsilon^{-\textup{dim}(\xxx\times\aaa)}$. 
    A similar exponential dependence to the dimension of the state space has been established in~\cite{Dexter:2021}. 
    Considering the current performance of general LP solvers, this approach is attractive for small to medium-sized problems. As noted in~\cite{Dexter:2021}, dealing with the general $d$-dimensional case without exponential scaling in $d$ is challenging. Therefore, understanding the selection of a suitable distribution for future sample drawing is crucial.
    Certain regions in the state-action space may hold more ``informative" characteristics than others. Sampling constraints based on the expert occupancy measure could potentially offer a more scalable bound.  
    Exploring this direction is a topic for future research.}
\end{remark}
\begin{remark}[$l_1$-regularization]\label{regularization}\textup{
    To cut down on required samples $N$, a common method is using $l_1$-regularization to reduce the effective dimension of the optimization variable. This concept is formalized in the current setting \cite{Campi-regularization}. 
    Moreover, in the spirit of the compressed sensing literature \cite{ref:Donoho-06}, $l_1$-regularization will promote sparse solutions and hence lead to ``simple" cost functions.
    In the context of optimal control $l_1$-regularization term is studied as the so-called ``maximum hands-off control" paradigm \cite{ref:Nagahara-16,ref:Chatterjee-16}. 
    In our case, the utilization of the $l_1$-norm offers two primary advantages. 
    Firstly, the $l_1$-norm promotes sparsity in solutions, thereby potentially reducing computational demands. 
    Secondly, this specific type of regularization preserves the linearity of the program.}
\end{remark}
%

\subsection{Sample-based inverse reinforcement learning}\label{5}

In this section, we explore the realistic scenario where we lack access to the expert policy $\expert$ and the transition law $\mathsf{P}$. The learner only receives a finite set of truncated expert sample trajectories and cannot interact or query the expert for additional data during training.
Despite the unknown MDP model, we assume access to a \emph{generative-model oracle}. It provides the next state $x'$ given a state-action pair $(x,a)$ sampled from $\mathsf{P}(\cdot|x,a)$. This is known as the simulator-defined MDP~\cite{Szorenyi:2014,Taleghan:2015}.

\paragraph{Sampling process.}
Let $\tau=\{\tau_i=(x_0^i,a_0^i,\ldots,x_H^i,a_H^i)\}_{i=1}^m$ be i.i.d., truncated sample trajectories according to $\expert$.
For any $c\in\textup{Lip}(\xxx\times\aaa)$, we consider the sample average approximation of the expectation $\inner{\ocop}{c}{}$ given by, $\widehat{\inner{\ocop}{c}{}}(\tau)\triangleq\frac{1}{m}\athroisma{t}{0}{H}{\athroisma{j}{1}{m}{\gamma^t c(x_t^j,a_t^j)}}$.
Similarly, if $\xi=\{x_0^k\}_{k=1}^n$ are i.i.d., samples according to $\nu_0$, for any $u\in\textup{Lip}{F}(\xxx)$, we define the corresponding sample average estimation of the expectation $\inner{\nu_0}{u}{}$ by $\widehat{\inner{\nu_0}{u}{}}(\xi)\triangleq\frac{1}{n}\athroisma{k}{1}{n}{u(x_0^k)}$.
Moreover, let  $\zeta=\{(x^{(l)},a^{(l)})\}_{l=1}^N$ be i.i.d. samples drawn from $\xxx\times\aaa$ according to $\mathds{P}\in\mathcal{P}(\xxx\times\aaa)$.
We also use the following notation $\widehat{T}_\gamma^* u(x^{(l)},a^{(l)},y^{(l)})\triangleq u(x^{(l)})  - \frac{1}{k} \sum_{i=1}^k u(y^{(l)}_i), \quad \{y^{(l)}_i\}_{i=1}^k \overset{\text{i.i.d.}}{\sim}\mathsf{P}(\cdot|x^{(l)},a^{(l)})$.
We are interested in the finite-sample analysis of the following random convex program:
\begin{align*}\label{SIP2}
   & \left\{ \begin{array}{ll} \!\!\!\!
       \inf_{\alpha,\beta,\varepsilon} & \varepsilon\\
       \textup{s.t.}& \widehat{\inner{\ocop}{c}{}}(\tau)-\widehat{\inner{\nu_0}{u}{}}(\xi)\le\varepsilon,\\
       & c(x^{(l)},a^{(l)})-\widehat{T}_{\gamma}^* u(x^{(l)},a^{(l)},y^{(l)})
       \ge -{\varepsilon},\,\,\forall\, l=1,\ldots N, \\
       & \alpha\in\Delta_{[n_u]},\beta\in\Delta_{[n_c]},\\
       & c\in\mathbf{C}_{n_c},u\in\mathbf{U}_{n_u},\varepsilon\geq 0,
   \end{array} \right. \tag{\color{blue}\textup{SIP$_{\mathbf{N,m,n,k}}$}}
\end{align*}
where the function classes $\mathbf{C}_{n_c}, \mathbf{U}_{n_u}$ are defined as in the previous Section. 
We make the following measurability assumption which is analogous to Assumption~\ref{assumption4}.
\begin{assumption}\label{assumption5}
    The~(\ref{SIP2})  optimizer generates a Borel measurable mapping  that associates each multi-sample $(y,\tau,\xi,\zeta)$ to a uniquely defined optimizer $(\tilde{\alpha}_{N,m,n,k},\tilde{\beta}_{N,m,n,k},\tilde{\varepsilon}_{N,m,n,k})$. 
\end{assumption}
\begin{theorem}\label{lastone}
    Under Assumptions~\ref{assumption1}, \ref{assumption2} and \ref{assumption5}, if $u_1\equiv 1$ and $\theta>\frac{1}{(1-\gamma)\textup{dim}(\xxx\times\aaa)}$, then for $N\geq \textup{N}(n_c+n_u+1, g(\frac{\epsilon}{L_{\Lambda}}), \delta/2)$, $n\geq\frac{8K_{u,\infty}^2\theta^2n_u\ln{(\frac{8n_u}{\delta})}}{\epsilon^2}$, $H=\frac{1}{1-\gamma}\log(\frac{2}{\varepsilon})$,  $k\geq\frac{8Cn_u\theta^2\log(4 n_u N /\gamma)}{\eps^2}$ and $m\geq\frac{8K_{c,\infty}^2\theta^2n_c\ln{(\frac{8n_c}{\delta})}}{(1-\gamma)^2\epsilon^2}$, with probability at least $1-\delta$, $\tilde{c}_{N,m,n,k}\in\mathcal{C}^{\varepsilon_{\textup{approx}}+\epsilon}(\expert)$.
    The constants $K_{c,\infty}$ and $K_{u,\infty}$ are given as $K_{c,\infty}\triangleq\max_{i=1,\ldots,n_c}\norma{c_i}{\infty}$ and $K_{u,\infty}\triangleq\max_{j=1,\ldots,n_u}\norma{u_i}{\infty}$.
\end{theorem}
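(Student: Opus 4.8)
The plan is to run the scenario-theoretic argument of the proof of Theorem~\ref{scenario} on top of uniform finite-sample concentration bounds that account for the replacement of the three unknown quantities $\inner{\ocop}{\cdot}{}$, $\inner{\nu_0}{\cdot}{}$ and $\mathsf{P}(\cdot)$ by their sample surrogates. Write the optimizer of~(\ref{SIP2}) as $(\tilde\alpha,\tilde\beta,\tilde\varepsilon)$ and set $\tilde c=\sum_i\tilde\alpha_ic_i$, $\tilde u=\sum_j\tilde\beta_ju_j$; recall the identity $\inner{\ocop}{T_{\gamma}^*u}{}=\inner{T_{\gamma}\ocop}{u}{}=\inner{\nu_0}{u}{}$, so that the expectation constraint reads $\inner{\ocop}{c}{}-\inner{\nu_0}{u}{}\le\varepsilon$ and the pointwise constraint reads $-c(x,a)+T_{\gamma}^*u(x,a)\le\varepsilon$, where only $\inner{\ocop}{c}{}$, $\inner{\nu_0}{u}{}$ and $\mathsf{P}u(x,a)$ are unknown. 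These are estimated respectively from the $m$ truncated expert trajectories $\tau$, the $n$ initial-state draws $\xi$, and the $k$ generative-model calls per constraint point, and each estimate introduces an error which I will control uniformly over the $\ell_2$-balls $\mathbf{C}_{n_c}$, $\mathbf{U}_{n_u}$ and then union with the high-probability event of Theorem~\ref{scenario}.

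The bounds on $m$, $n$ and $H$ come from a routine uniform concentration argument for the expectation constraint. Since $c=\sum_i\alpha_ic_i$, $u=\sum_j\beta_ju_j$ are linear in the parameters and $\norma{\alpha}{2},\norma{\beta}{2}\le\theta$, Cauchy--Schwarz reduces the uniform error over $\mathbf{U}_{n_u}$ to $\theta\sqrt{n_u}$ times the worst per-basis error $\max_j|\widehat{\inner{\nu_0}{u_j}{}}(\xi)-\inner{\nu_0}{u_j}{}|$, which a Hoeffding inequality together with a union bound over $j=1,\dots,n_u$ controls; this yields the stated lower bound on $n$. For $\inner{\ocop}{c}{}$ one first pays the horizon-truncation error $\tfrac{\gamma^{H+1}}{1-\gamma}\norma{c}{\infty}$, absorbed by the choice of $H$, and then runs the identical argument on the $m$ i.i.d.\ truncated returns, giving the stated $m$. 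On the resulting event, which has probability at least $1-\delta/2$, the first constraint of~(\ref{SIP2}) forces $\inner{\ocop}{\tilde c}{}-\inner{\nu_0}{\tilde u}{}\le\tilde\varepsilon+O(\epsilon)$.

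For the pointwise constraints the difficulty is that they are themselves random through the generative-model draws $y^{(l)}=\{y_i^{(l)}\}$, so \cite[Theorem~1]{ref:Campi-08} cannot be applied to the bare uncertainty $(x^{(l)},a^{(l)})$. I would fold the draws into an augmented uncertainty $\omega=(x,a,y_1,\dots,y_k)$ with law ``$(x,a)\sim\mathds{P}$, then $y_i\mid(x,a)\sim\mathsf{P}(\cdot\mid x,a)$ i.i.d.'' and convex constraint $F(z,\omega)=-c(x,a)+\widehat{T}_{\gamma}^*u(x,a,y)-\varepsilon$, over the feasible set $\hat{\mathcal{Z}}$ obtained from the set $\mathcal{Z}$ of the proof of Theorem~\ref{scenario} by replacing the exact expectation constraint with its $\tau,\xi$-based surrogate. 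The set $\hat{\mathcal{Z}}$ is convex, compact, nonempty (as in the feasibility argument of Proposition~\ref{prob:epsilon:bound}, using $u_1\equiv1$ and $\theta>\tfrac1{(1-\gamma)\textup{dim}(\xxx\times\aaa)}$), independent of the $\omega^{(l)}$, and admits a Slater point, so \cite[Theorem~1]{ref:Campi-08} applies conditionally on $(\tau,\xi)$ (measurability of the optimizer being Assumption~\ref{assumption5}); integrating over $(\tau,\xi)$ gives, for $N\ge\textup{N}(n_c+n_u+1,g(\epsilon/L_{\Lambda}),\delta/2)$, that with probability at least $1-\delta/2$ the optimizer satisfies $\mathds{P}_\omega[F(\tilde z,\omega)>0]\le g(\epsilon/L_{\Lambda})$ up to a harmless constant factor. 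Since $F(z,\omega)-f(z,(x,a))=\widehat{T}_{\gamma}^*u(x,a,y)-T_{\gamma}^*u(x,a)$ is $\gamma$ times the Monte-Carlo error of $\tfrac1k\sum_iu(y_i)$ around $\mathsf{P}u(x,a)$, with $f$ the constraint of Theorem~\ref{scenario}, a conditional Hoeffding bound in $y\mid(x,a)$ — uniform over $\hat{\mathcal{Z}}$ via $\norma{u}{\infty}\le\sqrt{n_u}\theta K_{u,\infty}$, which is where the lower bound on $k$ enters — turns this into $\mathds{P}_{(x,a)}[f(\tilde z,(x,a))>O(\epsilon)]\le g(\epsilon/L_{\Lambda})$, exactly the chance constraint of Theorem~\ref{scenario}. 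The Lipschitz-sampling lemma \cite[Lemma~3.2 and Prop.~3.8]{ref:Mohajerin-13} ($f$ Lipschitz in $(x,a)$ with constant $L_{\Lambda}$, Assumption~\ref{assumption2}) then derandomizes it to $f(\tilde z,(x,a))\le O(\epsilon)$ for all $(x,a)$; combined with the previous paragraph, $\tilde u$ certifies $\tilde c\in\mathcal{C}^{\tilde\varepsilon+O(\epsilon)}(\expert)$ via Definition~\ref{defofinv}. Rescaling the accuracy budget from the outset replaces $O(\epsilon)$ by $\epsilon$, and the optimal value is bounded by $\tilde\varepsilon\le\varepsilon_{\textup{approx}}+O(\epsilon)$ by exhibiting, on the event of the second paragraph, a feasible point of~(\ref{SIP2}) near the minimizer of the tightened program $J_{n_{c,u}}$ of Proposition~\ref{prob:epsilon:bound} — giving $\tilde c\in\mathcal{C}^{\varepsilon_{\textup{approx}}+\epsilon}(\expert)$ with probability at least $1-\delta$.

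I expect the bookkeeping in the third paragraph to be the crux: one must keep the constraint-sampling points $(x^{(l)},a^{(l)})$ fresh for Campi's theorem while the $y^{(l)}$ perturb the very constraints being sampled, and must do so \emph{without} assuming a full-support lower bound on the conditional kernels $\mathsf{P}(\cdot\mid x,a)$ — which is why the generative-model noise is averaged out only \emph{after} the scenario step, and the Lipschitz-sampling lemma is invoked for $f$ rather than for $F$. One also has to verify that the data-dependent set $\hat{\mathcal{Z}}$ is nonempty and admits a Slater point for every realization of $(\tau,\xi)$, which is what legitimizes the conditional use of \cite[Theorem~1]{ref:Campi-08}. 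Everything else — the estimates producing the bounds on $m$, $n$, $k$, $H$ — is Hoeffding/union-bound/Cauchy--Schwarz over the finite-dimensional parameter balls.
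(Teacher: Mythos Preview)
Your overall architecture coincides with the paper's: establish a uniform Hoeffding/union-bound concentration estimate for the expectation constraint (the paper isolates this as Proposition~\ref{p1}, proved exactly as you outline, basis-by-basis with Cauchy--Schwarz), then run the scenario-plus-Lipschitz argument of Theorem~\ref{scenario} \emph{conditionally} on the data $(\tau,\xi)$, integrate out, and take a union bound. On that skeleton you and the paper agree.

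Where you diverge is in the treatment of the generative-model error, and here your proposal is in fact \emph{more complete} than the paper's written proof. The paper's proof silently works with the program $\textup{SIP}_{N,m,n}$ whose pointwise constraints use the \emph{exact} $f(z,\lambda)=-c(x,a)+T_\gamma^*u(x,a)-\varepsilon$; it never replaces $\mathsf{P}u_j$ by $\tfrac{1}{k}\sum_i u_j(y_i^{(l)})$, and the bound on $k$ in the statement is not invoked anywhere in the proof. Your augmented-uncertainty device --- treat $\omega=(x,a,y_1,\dots,y_k)$ as the scenario, apply Campi to $F$, then pass to a chance constraint on $f$ via conditional Hoeffding, and only then invoke Assumption~\ref{assumption2} and the Lipschitz lemma on $f$ --- is a legitimate way to close this gap. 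The step you flag (``up to a harmless constant factor'') is the right place to be careful: from $\mathds{P}_\omega[F(\tilde z,\omega)>0]\le\eta$ one gets $\mathds{P}_{(x,a)}[f(\tilde z,(x,a))>\epsilon']\le\eta/\inf_{(x,a)}\mathds{P}_{y\mid(x,a)}[|\text{MC error}|\le\epsilon'/\gamma]$, and the $k$ bound makes the denominator at least $1/2$, so the loss is a factor of $2$ in the violation probability (hence a mild inflation of $N$), not a failure of the argument. The form of the $k$ bound in the statement, with its $\log(n_uN)$ factor, also matches a simpler alternative you could use to establish $\tilde\varepsilon\le\varepsilon_{\textup{approx}}+O(\epsilon)$: a union-Hoeffding over the $N$ constraint points and $n_u$ basis functions showing $|\widehat{T}_\gamma^*u-T_\gamma^*u|\le O(\epsilon)$ at every sampled $(x^{(l)},a^{(l)})$, so that the $J_{n_{c,u}}$-minimizer of Proposition~\ref{prob:epsilon:bound} is $O(\epsilon)$-feasible for~(\ref{SIP2}). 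The paper's proof does not carry out this last step either; it simply asserts the $\varepsilon_{\textup{approx}}$ conclusion.
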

For the remaining analysis we will use for brevity the following notation,
\begin{equation*}
\mathds{P}_\zeta\triangleq\mathds{P}^N,\,\,\,\,\mathds{P}_{\tau}\triangleq(\mathds{P}_{\nu_0}^{\pi_{\textup{E}}})^m,\,\,\,\,\mathds{P}_{\xi}\triangleq{\nu_0}^n
\end{equation*}
and adopt a similar notation for products of probability measures, e.g., $\mathds{P}_{\tau,\xi}\triangleq\mathds{P}_{\tau}\otimes\mathds{P}_{\xi}$.
We first need the following result.
\begin{proposition}\label{p1}
    Let $\epsilon\in(0,1)$ and $\delta\in (0,1)$. Under Assumption~\ref{assumption1}and~\ref{AA1}, for $n\geq\frac{8K_{u,\infty}^2\theta^2n_u\ln{(\frac{8n_u}{\delta})}}{\epsilon^2}$ and $m\geq\frac{8K_{c,\infty}^2\theta^2n_c\ln{(\frac{8n_c}{\delta})}}{(1-\gamma)^2\epsilon^2}$, it holds with probability at least $1-\delta/2$ that
    \begin{equation*}
    \sup_{c\in\mathbf{C}_{n_c}, u\in\mathbf{U}_{n_u}} \left|\inner{\ocop}{c-T_{\gamma}^*u}{}-\widehat{\inner{\ocop}{c}{}}+\widehat{\inner{\nu_0}{u}{}} \right|\le\epsilon,
    \end{equation*}
    %
    where $K_{c,\infty}\triangleq\max_{i=1,\ldots,n_c}\norma{c_i}{\infty}$ and
    $K_{u,\infty}\triangleq\max_{j=1,\ldots,n_u}\norma{u_i}{\infty}$.
\end{proposition}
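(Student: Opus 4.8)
The plan is to bound the two sources of error separately and then combine via a union bound. Write the quantity inside the supremum as the sum of two terms: one controlling the estimation of $\inner{\ocop}{c}{}$ by the truncated empirical average $\widehat{\inner{\ocop}{c}{}}(\tau)$, and one controlling the estimation of $\inner{\nu_0}{u}{}$ by $\widehat{\inner{\nu_0}{u}{}}(\xi)$. So it suffices to show that each of these holds uniformly over the respective finite-dimensional ball with probability at least $1-\delta/4$, and then apply a union bound to get $1-\delta/2$.

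For the $u$-term: since $u=\sum_{j=1}^{n_u}\beta_j u_j$ with $\norma{\beta}{1}\le\theta$, we have $\norma{u}{\infty}\le\theta K_{u,\infty}$ uniformly over $\mathbf{U}_{n_u}$. First I would reduce the uniform bound over $\mathbf{U}_{n_u}$ to a bound on each coordinate: it is enough to control $\sup_{j}\bigl|\inner{\nu_0}{u_j}{}-\frac1n\sum_{k=1}^n u_j(x_0^k)\bigr|$, because linearity and $\norma{\beta}{1}\le\theta$ give $\bigl|\inner{\nu_0}{u}{}-\widehat{\inner{\nu_0}{u}{}}(\xi)\bigr|\le\theta\max_j\bigl|\inner{\nu_0}{u_j}{}-\frac1n\sum_k u_j(x_0^k)\bigr|$. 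For each fixed $j$, the summands $u_j(x_0^k)$ are i.i.d., bounded in $[-K_{u,\infty},K_{u,\infty}]$ with mean $\inner{\nu_0}{u_j}{}$, so Hoeffding's inequality gives a tail bound $2\exp\bigl(-n t^2/(2K_{u,\infty}^2)\bigr)$; a union bound over $j=1,\ldots,n_u$ and the choice $t=\epsilon/(2\theta)$ together with $n\geq\frac{8K_{u,\infty}^2\theta^2 n_u\ln(8n_u/\delta)}{\epsilon^2}$ makes the total failure probability at most $\delta/4$, yielding the $u$-error $\le\epsilon/2$.

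For the $c$-term there is the extra complication of truncation: $\widehat{\inner{\ocop}{c}{}}(\tau)=\frac1m\sum_{j=1}^m\sum_{t=0}^H\gamma^t c(x_t^j,a_t^j)$ is an empirical mean of the truncated return $R^j\triangleq\sum_{t=0}^H\gamma^t c(x_t^j,a_t^j)$, whose expectation is $\inner{\ocop}{c}{}$ minus the tail $\sum_{t>H}\gamma^t\mathds{E}[c(x_t,a_t)]$. However, since $\norma{c}{\infty}\le\theta K_{c,\infty}$, this truncation bias is bounded by $\frac{\gamma^{H+1}}{1-\gamma}\theta K_{c,\infty}$, which (under a mild additional choice of $H$, or absorbed into the $\epsilon$-budget — note $H$ is in fact fixed later in Theorem~\ref{lastone}) can be made negligibly small; alternatively one can state the bound with $H=\infty$ and only pay truncation error in the final theorem. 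Conditioning on this, each $R^j$ is bounded in $[-\frac{\theta K_{c,\infty}}{1-\gamma},\frac{\theta K_{c,\infty}}{1-\gamma}]$, so again Hoeffding over the $m$ i.i.d.\ trajectories, reduced coordinate-wise over $c_1,\ldots,c_{n_c}$ via $\norma{\alpha}{1}\le\theta$ exactly as above, together with $m\geq\frac{8K_{c,\infty}^2\theta^2 n_c\ln(8n_c/\delta)}{(1-\gamma)^2\epsilon^2}$, gives the $c$-error $\le\epsilon/2$ with failure probability $\le\delta/4$. Adding the two halves gives the claimed uniform bound of $\epsilon$ with probability at least $1-\delta/2$.

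The main obstacle I expect is handling the truncation bias cleanly: one must be careful that the statement as written (with a finite $H$ that is not yet specified in this proposition) is really an approximation and the truncation term is either controlled by a separate assumption on $H$ or folded into the final-theorem error budget; the rest is a routine Hoeffding-plus-union-bound argument once the reduction from the uniform-over-ball bound to a finite union over basis functions is made via the $\norma{\cdot}{1}\le\theta$ constraint.
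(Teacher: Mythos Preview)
Your approach is essentially the paper's: Hoeffding per basis function, union bound over the $n_u$ (resp.\ $n_c$) bases, lift to the full $\theta$-ball by linearity, then combine the two events $A$ and $B$ by a further union bound to get probability $\geq 1-\delta/2$. The only cosmetic differences are that the paper uses the looser per-basis tolerance $\tfrac{\epsilon}{2\sqrt{n_u}\theta}$ (which is what produces the extra factor $n_u$ in the stated sample size), and it silently treats the truncated return as if $H=\infty$, deferring the truncation bias to the choice of $H$ in Theorem~\ref{lastone}---exactly the resolution you anticipated.
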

\begin{proof}
    First note that under Assumption~\ref{AA1}, the quantities $K_{c,\infty}$ and $	K_{u,\infty}$ are finite. 
    Now by using the Hoeffding's bound, we have that for $n\geq\frac{8K_{u,\infty}^2\theta^2n_u\ln{(\frac{8n_u}{\delta})}}{\epsilon^2}$,
    \begin{equation}\label{p2}
        \mathds{P}_{\xi}\left[\left|\inner{\nu_0}{u_j}{}-\widehat{\inner{\nu_0}{u_j}{}}\right|\le\frac{\epsilon}{2\sqrt{n_u}\theta}\right]\geq 1-\frac{\delta}{4n_u},
    \end{equation}
    for all $j=1,\ldots,n_u$. 
    Therefore,
    \begin{eqnarray}
        &&    \mathds{P}_{\xi}\left[\sup_{u\in\mathbf{U}_{n_u}}\left|\inner{\nu_0}{u}{}-\widehat{\inner{\nu_0}{u}{}}\right|\le\frac{\epsilon}{2}\right]                       \nonumber              \\ 
        &\geq&    \mathds{P}_{\xi}\left[\forall j=1,\ldots,n_u:\, \left|\inner{\nu_0}{u_j}{}-\widehat{\inner{\nu_0}{u_j}{}}\right|\le\frac{\epsilon}{2\sqrt{n_u}\theta}\right]                    \nonumber          \\
        &\geq&        1-\delta/4,         \nonumber     
    \end{eqnarray}
    where the first inequality follows by the monotonicity property of probability measures and the second one follows by~(\ref{p2}) and a union bound. 
    Integrating over the whole $(\Omega^m,\mathds{P}_{\tau})$, we end up that
    \begin{equation}\label{p3}
        \mathds{P}_{\tau,\xi}[\underbrace{\sup_{u\in\mathbf{U}_{n_u}}\left|\inner{\nu_0}{u}{}-\widehat{\inner{\nu_0}{u}{}}\right|\le\frac{\epsilon}{2}}_{\triangleq A}] \geq 1-\delta/4.
    \end{equation}
    By using analogous arguments and taking into account that $\left|\athroisma{t}{0}{\infty}{\gamma^tc_i(x_t,a_t)}\right|\le\frac{K_{c,\infty}}{1-\gamma}$, for all $(x_t,a_t)\in\xxx\times\aaa$, $t\in\mathbf{N}$, $i=1,\ldots,n_c$, we can cocnlude that for $m\geq\frac{8K_{c,\infty}^2\theta^2n_c\ln{(\frac{8n_c}{\delta})}}{(1-\gamma)^2\epsilon^2}$,
    \begin{equation}\label{p4}
        \mathds{P}_{\tau,\xi}[\underbrace{\sup_{c\in\mathbf{C}_{n_c}}\left|\inner{\ocop}{c}{}-\widehat{\inner{\ocop}{c}{}}\right|\le\frac{\epsilon}{2}}_{\triangleq B}] \geq 1-\delta/4.
    \end{equation}
    Finally, note that
    \begin{align*}
        &\prob_{\tau,\xi}\! \left[\!\sup_{c\in\mathbf{C}_{n_c}, u\in\mathbf{U}_{n_u}} \! \left|\inner{\ocop}{c-T_{\gamma}^*u}{} \!-\!\widehat{\inner{\ocop}{c}{}}\!+\!\widehat{\inner{\nu_0}{u}{}} \right|\le\epsilon  \right] \\   
        &\quad\geq   \prob_{\tau,\xi}[A\cap B]\\
        &\quad \geq   1-\delta/2,
    \end{align*}
    where in the first inequality we have used the monotonicity property of probability measures and the second inequality follows by~(\ref{p3}), (\ref{p4}) and a simple union bound.
\end{proof}
\begin{proof}[Proof of Theorem~\ref{lastone}]  
    By using the same notation as in the proof of Theorem~\ref{scenario}, we can write
    \begin{align*}
        {\color{blue}\textup{SIP}_{\mathbf{N,m,n}}}: & \left\{ \begin{array}{ll}
        \inf_{z\in\mathcal{Z}_{m,n}} & h^\top z\\
        \textup{s.t.}& f(z,\lambda^{(i)})\le 0,\,\forall i=1,\ldots,N,
        \end{array} \right.
    \end{align*}
    where
    \begin{equation*}
        \mathcal{Z}_{m,n}=\left\{z=(\alpha,\beta,\varepsilon)\in\mathds{R}^{n_c+n_u+1}:\norma{\alpha}{2}\le\theta,\norma{\beta}{2}\le\theta,
        \right.
    \end{equation*}
    \begin{equation*}
        \sum_{i=1}^{n_c}\alpha_i\widehat{\inner{\ocop}{c_i}{}}(\tau)-\sum_{j=1}^{n_u}\beta_j\widehat{\inner{\nu_0}{u_j}{}}(\xi)\le\varepsilon,
    \end{equation*}
    \begin{equation*}
        \left. \int_{\Lambda} (-f(z,\lambda)+\varepsilon) d\lambda =1, \varepsilon\geq 0 \right\}.
    \end{equation*}
    Let $\tilde{z}_{N,m,n}=(\tilde{\alpha}_{N,m,n},\tilde{\beta}_{N,m,n},\tilde{\varepsilon}_{N,m,n})$ be the optimizer of ${\color{blue}\textup{SIP}_{\mathbf{N,m,n}}}$ and let $\tilde{c}_{N,m,n}=\athroisma{i}{1}{n_c}{{\tilde{\alpha}_{N,m,n_i}} c_i}$ be the associated cost function. 
    
    We first fix multi-samples $\tau,\xi$. Similarly as in Theorem~\ref{scenario}, we can conclude that for $N\geq \textup{N}(n_c+n_u+1, g(\frac{\epsilon}{L_{\Lambda}}), \delta/2)$,
    \begin{equation*}
        \mathds{P}_\zeta[y\in\Lambda^N:\,\,\tilde{z}_{N,m,n}(y,\tau,\xi)\models {\color{blue}\textup{IP}_{\mathbf{m,n,\epsilon}}}(y,\tau)]\geq 1-\delta/2,
    \end{equation*}
    where ${\color{blue}\textup{IP}_{\mathbf{m,n,\epsilon}}}$ is the $\epsilon$-perturbed robust counterpart of ${\color{blue}\textup{SIP}_{\mathbf{N,m,n}}}$ given by
    \begin{align*}
        {\color{blue}\textup{IP}_{\mathbf{m,n,\epsilon}}}: & \left\{ \begin{array}{ll}
            \inf_{z\in\mathcal{Z}_{m,n}} & h^\top z\\
            \textup{s.t.}& f(z,\lambda)\le \epsilon,\,\forall \lambda\in\Lambda,
        \end{array} \right.
    \end{align*}
    Integrating over the whole probability space $(\Omega^m\otimes\xxx^n,\prob_{\tau,\xi})$, we get
    \begin{equation}\label{p5}
        \mathds{P}_{y,\tau,\xi}[\tilde{z}_{N,m,n}\models {\color{blue}\textup{IP}_{\mathbf{m,n,\epsilon}}}]\geq 1-\delta/2.
    \end{equation}
    However, by virtue of Proposition~\ref{p1}, for $n\geq\frac{8K_{u,\infty}^2\theta^2n_u\ln{(\frac{8n_u}{\delta})}}{\epsilon^2}$ and $m\geq\frac{8K_{c,\infty}^2\theta^2n_c\ln{(\frac{8n_c}{\delta})}}{(1-\gamma)^2\epsilon^2}$
    \begin{eqnarray}
        &&\inner{\ocop}{\tilde{c}_{N,m,n}-T_{\gamma}^*\tilde{u}_{N,m,n}}{}\nonumber\\
        &\le&\widehat{\inner{\ocop}{\tilde{c}_{N,m,n}}{}}+\widehat{\inner{\nu_0}{\tilde{u}_{N,m,n}}{}}+\epsilon \label{p6}
    \end{eqnarray}
    with probability $\mathds{P}_{y,\tau,\xi}$ at least $1-\delta/2$. 
    Combining (\ref{p5}), (\ref{p6}) and a simple union bound completes the proof.
\end{proof}
Theorem~\ref{lastone} in particular holds when $\varepsilon_{\textup{approx}}$ is replaced with $\tilde{\varepsilon}_{N,m,n,k}$.
%

\section{Numerical Results}\label{appendix:numerical_results}

In this section, we examine a truncated LQR example.
We evaluate the empirical performance of the solution to the sampled inverse programs~\ref{SIP} (resp.~\ref{SIP2} and contrast it with the theoretical bounds outlined in Theorem~\ref{scenario} (resp.~Theorem~\ref{lastone}).

Consider a one-dimensional truncated Linear-Quadratic-Gaussian (LQG) control problem comprising a linear dynamical system
\begin{equation*}
    x_{t+1} = Ax_{t} + Ba_{t} + \omega_{t}, \quad t\in\mathbb{N},
\end{equation*}
and a quadratic state cost $c(s, a) = Qx^{2} + Ra^{2}$, where $A,B\in\mathbb{R},Q>0,R>0$. 
We assume that state and action spaces are given by $\mathcal{X}=\mathcal{A}=[-L,L]$ for some parameter $L>0$. The disturbances $\{\omega_{t}\}_{t\in\mathbb{N}}$ are i.i.d.~random variables generated by a truncated normal distribution with known parameters $\mu$ and $\sigma$, independent of the initial state $x_0$. Thus, the process $\omega_t$ has a distribution density
\begin{equation*}
    f(s, \mu, \sigma, L)=\left\{\begin{array}{cl}
        \frac{\frac{1}{\sigma} \phi\left(\frac{s-\mu}{\sigma}\right)}{\Phi\left(\frac{L-\mu}{\sigma}\right)-\Phi\left(\frac{-L-\mu}{\sigma}\right)}, & s \in[-L, L] \\
        0 & \text { o.w. },
    \end{array}\right.
\end{equation*}
where $\phi$ is the probability density function of the standard normal distribution, and $\Phi$ is its cumulative distribution function. 
The transition kernel $\mathsf{P}$ has a density function $p(y \mid x, a)$, i.e., $\mathsf{P}(C \mid x, a)=\int_C p(y \mid x, a) \mathrm{d} y$ for all $C \in \mathcal{B}(\xxx)$, that is given by
\begin{equation*}
p(y \mid x, a)=f(y-A x-B a, \mu, \sigma, L) .
\end{equation*}

In the special case that $L=+\infty$ the above problem represents the classical LQG problem, whose solution can be obtained via the algebraic Riccati equation \cite[p.~372]{ref:Bertsekas-12}. 
By referencing \cite[Lemma7.1]{ref:Peyman-17}, we can readily conclude that Assumption~\ref{assumption1} holds in this context with specific constants
\begin{equation*}   
L_\mathsf{P}=\frac{2 L \max \{A, B\}}{\sigma^2 \sqrt{2 \pi}\left(\Phi\left(\frac{L-\mu}{\sigma}\right)-\Phi\left(\frac{-L-\mu}{\sigma}\right)\right)}, \quad L_c = \max\{Q,R\}2L.
\end{equation*}
As value function $u:\mathcal{X}\to\mathbb{R}$ we use a simple polynomial of degree $2$ ($n_u=3$) such that
\begin{equation*}
    u(x) = \sum^{n_u}_{i=1} \beta_i u_i(x), \quad u_i(x) = x^{i-1},
\end{equation*}
whereas the cost function $c:\mathcal{X}\times\mathcal{A}\to\mathbb{R}$ is approximated by the following weighted sum ($n_c=9$)
\begin{equation*}
    c(x,a) = \sum^{n_c}_{i=1}\alpha_i c_i(x,a),\
\end{equation*}
where $c_1(x,a) = 1$, $c_2(x,a) = x$, $c_3(x,a) = a$, $c_4(x,a) = xa$, $c_5(x,a) = x^2$, $c_6(x,a) = a^2$, $c_7(x,a) = x^2 a$, $c_8(x,a) = xa^2$, $c_9(x,a) = x^2 a^2$.
For the simulation, the parameters are set as $L=10$, $A=-1.5$, $B=1$, $Q=R=1$, $\mu=0$, $\sigma=1$, and $\gamma=0.99$. The code for these experiments can be found at \href{https://github.com/RAPACIRLCS/code}{github.com/RAPACIRLCS/code}.
The experiments were run on a workstation with an AMD Ryzen 9 5950X CPU (16 cores) and 128GB of RAM.

\paragraph{Sampled Inverse Program with known transition kernel.}
%
\begin{figure*}[t]
    \centering
    \begin{minipage}[b]{0.49\textwidth}
        \centering
        \includegraphics[width=\textwidth]{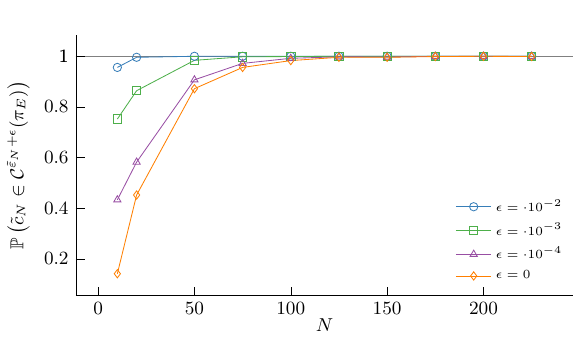}
        \small{(a) Empirical confidence}
        \label{fig:experiment_2:prob}
    \end{minipage}
    \hfill
    \begin{minipage}[b]{0.49\textwidth}
        \centering
        \includegraphics[width=\textwidth]{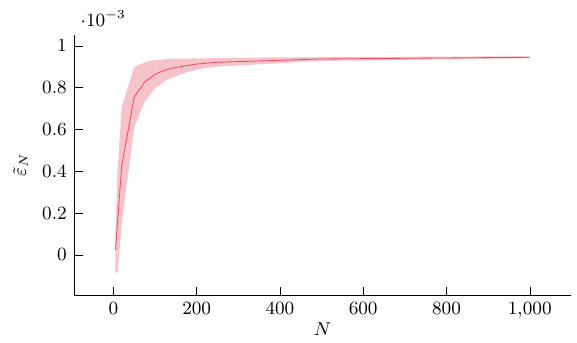}
        \small{(b) Objective function of \ref{SIP}}
        \label{fig:experiment_2:epsilon}
    \end{minipage}
    \vspace{0.5em}
    \begin{minipage}[b]{0.49\textwidth}
        \centering
        \includegraphics[width=\textwidth]{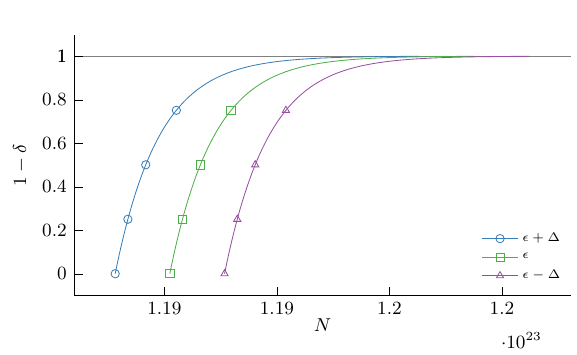}
        \small{(c) Sample complexity of Theorem~\ref{scenario}}
        \label{fig:experiment_2:deltas}
    \end{minipage}
    \hfill
    \begin{minipage}[b]{0.49\textwidth}
        \centering
        \includegraphics[width=\textwidth]{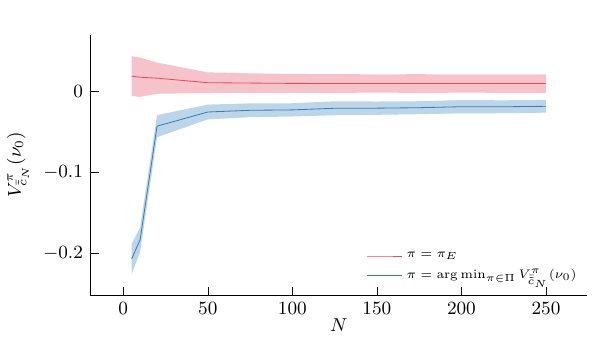}
        \small{(d) Performance of learnt cost}
        \label{fig:experiment_2:cost_performance}
    \end{minipage}
    \caption{Solutions of the Sampled Inverse Program \ref{SIP}. The variable $N$ is the number of i.i.d.~samples $(x, a)$ drawn uniformly from $\mathcal{X}\times\mathcal{A}$. We run $1000$ independent experiments. Plot (a) shows the empirical probability of the estimated cost function $\tilde c_N$ being an element of the feasibility set, as described in Theorem \ref{scenario} for given values of $N$ and $\epsilon$. Plot (b) shows the objective value of the random program \ref{SIP}, i.e., $\tilde{\varepsilon}_{N}$ on average over the $1000$ experiments, where the shaded area shows the standard deviations. Plot (c) is a visualization of the theoretical sample complexity as given by Theorem~\ref{scenario}. For various values of $
    \delta$ and $\epsilon$, we plot the sample size $N=\textup{N}(n_{c} + n_{u} + 1, g(\frac{\epsilon}{L_{\Lambda}}), \delta)$. The variation parameter is set to $\Delta=1\cdot 10^{-7}$.
    Plot (d) compares the discounted long-run costs $V_{\bar{\tilde{c}}_{N}}^\pi(\nu_0)$ for the average $\bar{\tilde{c}}_{N}$ of the learnt costs $\tilde c_N$ under the expert policy $\pi_{\textup{E}}$ (red) and the optimal policy (blue).
    The solid line plots average over $1000$ independent experiments, where the shaded area shows the standard deviations. 
    }
    \label{fig:experiment_2}
\end{figure*}
In our first experiments highlighted in Figure~\ref{fig:experiment_2}, we focus on the sampled inverse program \ref{SIP}. More precisely, we solve the program \ref{SIP} for various choices of sample sizes $N$ and denote its corresponding optimizers as $(\tilde{\alpha}_N,\tilde{\beta}_N,\tilde{\varepsilon}_N)$ and its extracted cost function as $\tilde{c}_N=\athroisma{i}{1}{n_c}{{\tilde{\alpha}_{N_i}} c_i}$.
Figure~\hyperref[fig:experiment_2:prob]{3a} shows the probability of the learnt cost function $\tilde{c}_N$ being $(\tilde{\varepsilon}_N+\epsilon)$-inverse feasible for various choices of $\epsilon$ and $N$. 
The plotted probability represents the empirical probability derived from $1000$ experiments. 
It is evident that, for a constant parameter $\epsilon$, the likelihood of being inverse feasible grows with the increase of the sample size $N$.
Additionally, for a constant sample size $N$, the probability of being inverse feasible increases as the parameter $\epsilon$ decreases. 
Both of these trends align with and support our theoretical findings as outlined in Theorem~\ref{scenario}.
Figure~~\hyperref[fig:experiment_2:epsilon]{3b} shows the objective function of program \ref{SIP} for various choices of sample sizes $N$. 
Since these are random programs, we plot the empirical average (solid line) and its corresponding standard deviations (shaded area) derived from 1000 independent experiments. 
As expected, the objective value $\tilde{\varepsilon}_N$ increases as a function of the sample size $N$. 
Figure~\hyperref[fig:experiment_2:deltas]{3c} visualizes the theoretical sample complexity of Theorem~\ref{scenario}, i.e., for various choices of $
\delta$ and $\epsilon$, we plot the sample size $N=\textup{N}(n_{c} + n_{u} + 1, g(\frac{\epsilon}{L_{\Lambda}}), \delta)$. 
To simplify the computation, we used the closed form upper bound for the function \textup{N} derived in \cite{CAMPI2008381} and given as 
\begin{equation*}
    \textup{N}(n,\epsilon,\delta)   \leq \frac{2}{\epsilon}\log\left(\frac{1}{\delta}\right)+2n+\frac{2n}{\epsilon}\log\left(\frac{2}{\epsilon}\right).
\end{equation*}
When comparing Figure~\hyperref[fig:experiment_2:prob]{3a} and Figure~\hyperref[fig:experiment_2:deltas]{3c}, we can see that there is a significant gap between the empirical and theoretical bounds. The dynamics of a variation in $\epsilon$, however, match the empirically observed behaviour.
Figure~\hyperref[fig:experiment_2:cost_performance]{3d} visualizes the performance of the learnt cost $\tilde c_N$ by comparing the discounted long-run cost of the expert policy under this learnt policy $V_{\tilde c_N}^{\pi_{\textup{E}}}(\nu_0)$ with its theoretical lower bound $\min_{\pi\in\Pi} V_{\tilde c_N}^{\pi}(\nu_0)$. According to Theorem~\ref{scenario} and Proposition~\ref{IPe} for large $N$ this difference vanishes with high probability, this behaviour can be observed in the plot. To reduce the computational effort replace in Figure~\hyperref[fig:experiment_2:cost_performance]{3d} the learnt cost $\tilde c_N$ with its empirical average, denoted $\bar{\tilde c}_N$, taken over 1000 independent experiments. More precisely, for $1000$ initial conditions $x_{0}\sim\nu_{0}$ we plot $V_{\bar{\tilde{c}}_{N}}^{\pi_{\textup{E}}}(x_0)$ and the theoretical lower bound $\min_{\pi\in\Pi} V_{\bar{\tilde{c}}_{N}}^\pi(x_0)$.

\paragraph{Sampled Inverse Program with unknown transition kernel.}
%
\begin{figure*}[t]
    \centering
    \begin{minipage}[b]{0.49\textwidth}
        \centering
        \includegraphics[width=\textwidth]{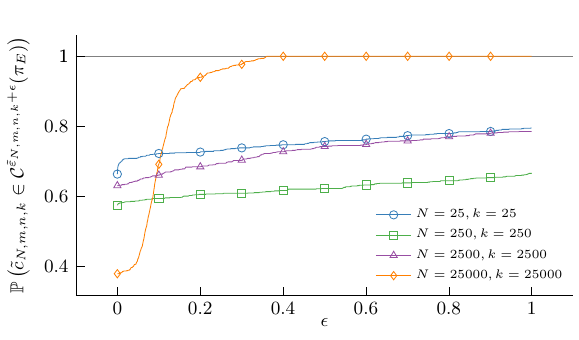}
        \small{(a) Empirical confidence}
        \label{fig:experiment_3:prob}
    \end{minipage}
    \hfill
    \begin{minipage}[b]{0.49\textwidth}
        \centering
        \includegraphics[width=\textwidth]{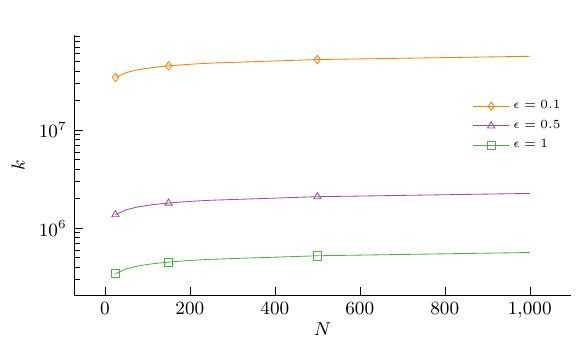}
        \small{(b) Sample complexity of $k$}
        \label{fig:experiment_3:kcomplex}
    \end{minipage}
    \caption{Solutions of the Sampled Inverse Program \ref{SIP2}. The variable $N$ is the number of i.i.d.~samples $(x, a)$ drawn uniformly from $\mathcal{X}\times\mathcal{A}$. We run $1000$ independent experiments. Plot (a) shows the empirical probability of the estimated cost function $\tilde{c}_{N,m,n,k}$ being an element of the feasibility set, as described in Theorem~\ref{lastone} for different $N,k$ pairs given a chosen accuracy parameter $\epsilon$. Plot (b) shows the theoretical lower bound on $k$ depending on $N$, for a set $\epsilon$, as described by Theorem~\ref{lastone}. 
    }
    \label{fig:experiment_3}
\end{figure*}
The second experiment, Figure~\ref{fig:experiment_3}, solves the sampled inverse program~\ref{SIP2} with unknown transition kernel.
Compared to \ref{SIP}, since we assume the transition kernel to be unknown, the inequality constraints are based on sampled state transitions.
The empirical probability, shown in Figure~\hyperref[fig:experiment_3:prob]{4a}, is derived from $1000$ independent experiments.
To decrease the degrees of freedom in the parameter selection we set $n=m=k$ for the simulations.
The behaviour of the empirical confidence, observable in Figure~\hyperref[fig:experiment_3:prob]{4a}, follows the trends shown in Figure~\ref{fig:experiment_2}.
As expected, an increase in the number of samples increses the confidence of learning a cost function $\tilde{c}_{N,m,n,k}$ that belongs to the inverse feasibility set $\mathcal{C}^{\tilde{\varepsilon}_{N,m,n,k}+\epsilon}(\pi_{E})$.
It can be seen how, even for the largest possible $\epsilon$, to rearch a certain empirical confidence the \ref{SIP2} program requires many more sate-action samples $N$ compared to the \ref{SIP} program.
When comparing the empirical confidence of a given $\epsilon$, $N$, and $k$ with the theoretical sample complexity, following Theorem~\ref{lastone}, of $k$ corresponding to the same $\epsilon$ and $N$ it can be seen that the empirical sample performs of  \ref{SIP2} is much more efficient.

\printbibliography

\newpage
\appendix
\section{Supplementary material}
\subsection{The linear programming approach for continuous MDPs}\label{detailsLP}

In this section, we present essential facts and derivations concerning the Linear Programming (LP) approach to continuous Markov Decision Processes (MDPs). 
These insights and results will serve as valuable foundations for the subsequent discussions and analysis.

The following Theorem summarizes the properties of the optimal value function, under the assumption that the control model is Lipschitz continuous.
\begin{theorem}[{\cite[Theorem~3.1]{ref:Dufour-13}, \cite{ref:Lerma:adaptive-01}}]\label{theorem3.1}
    Under Assumption~\ref{assumption1}, the following hold:
    \begin{enumerate}
        \item The value function $V^\star_c$ is in $\operatorname{Lip}(\xxx)$ with Lipschitz constant $L_{V^\star_c}\le L_c + \frac{\gamma}{1-\gamma}\|c\|_\infty L_{\mathsf{P}}$;
        \item The value function $V^\star_c$ satisfies the \emph{Bellman optimality equation} 
        \begin{equation*}
            V^\star_c(x)=\min_{a\in\aaa}\{c(x,a)+\gamma\int_{\xxx}V^\star_c(y)Q(dy|x,a)\},\quad\mbox{for all}\,\, x\in\xxx;
        \end{equation*}
        \item There exists a $\gamma$-discount $\nu_0$-optimal policy which is stationary deterministic.
    \end{enumerate}
\end{theorem}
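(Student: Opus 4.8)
The plan is to treat Theorem~\ref{theorem3.1} as a classical consequence of the dynamic programming principle. I would introduce the \emph{Bellman operator} $T$ acting on bounded measurable functions $u\colon\xxx\to\mathds{R}$ by
\[
    (Tu)(x)\triangleq\min_{a\in\aaa}\Big\{c(x,a)+\gamma\int_{\xxx}u(y)\,\mathsf{P}(\drv y\mid x,a)\Big\},
\]
and prove the three claims in order by (i) showing that $T$ is a well-defined $\gamma$-contraction that preserves boundedness, continuity and a suitable Lipschitz bound, (ii) identifying its unique fixed point with $V^\star_c$, and (iii) reading off an optimal stationary deterministic policy from the minimiser in the definition of $T$.

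First I would verify that $T$ is well defined and regularity-preserving. By Assumption~\ref{assumption1}~\ref{AA1} the action space $\aaa$ is compact; by \ref{AA3} the map $(x,a)\mapsto\mathsf{P}u(x,a)$ is continuous whenever $u$ is continuous (weak continuity); and $c$ is continuous by \ref{AA4}. Hence for $u\in\mathcal{C}_b(\xxx)$ the function $(x,a)\mapsto c(x,a)+\gamma\mathsf{P}u(x,a)$ is bounded and continuous, the minimum over $a\in\aaa$ is attained, and by Berge's maximum theorem $Tu\in\mathcal{C}_b(\xxx)$, while the minimiser correspondence $x\mapsto\{a\in\aaa:\,c(x,a)+\gamma\mathsf{P}u(x,a)=(Tu)(x)\}$ is upper semicontinuous with nonempty compact values, so it admits a Borel measurable selector by the Kuratowski--Ryll-Nardzewski selection theorem. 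Since $|(Tu)(x)-(Tv)(x)|\le\gamma\|u-v\|_\infty$, $T$ is a $\gamma$-contraction on the Banach space $(\mathcal{C}_b(\xxx),\|\cdot\|_\infty)$, and Banach's fixed point theorem yields a unique $V_c\in\mathcal{C}_b(\xxx)$ with $TV_c=V_c$; this is precisely the Bellman equation of claim~2, once $V_c$ is identified with $V^\star_c$.

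For the Lipschitz estimate (claim~1) I would show that $T$ maps the $\|\cdot\|_\infty$-closed (hence complete) set
$\mathcal{S}\triangleq\{u\in\mathcal{C}_b(\xxx):\|u\|_\infty\le\tfrac{\|c\|_\infty}{1-\gamma},\ |u|_{\textup{L}}\le L_c+\tfrac{\gamma}{1-\gamma}\|c\|_\infty L_{\mathsf{P}}\}$
into itself: the sup-norm bound follows by iterating $\|Tu\|_\infty\le\|c\|_\infty+\gamma\|u\|_\infty$, and the Lipschitz bound follows by fixing $x,x'$, choosing an action that is optimal at $x$, and estimating the difference of the two Bellman expressions via \ref{AA4} together with \ref{AA3} to control $|\mathsf{P}u(x,a)-\mathsf{P}u(x',a)|$ through $L_{\mathsf{P}}$ and the relevant norm of $u$; since the fixed point satisfies $V_c\in\mathcal{S}$, claim~1 follows. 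Equivalently one runs value iteration $V_n=T^nu_0$ from $u_0\equiv0$, checks the same bounds by induction on $n$, and passes to the sup-norm limit.

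Finally, to identify $V_c=V^\star_c$ and obtain claim~3, I would argue by two inequalities. For the lower bound, note that $V_c(x)\le c(x,a)+\gamma\mathsf{P}V_c(x,a)$ for every $(x,a)$; iterating this inequality along the trajectories of an arbitrary (possibly history-dependent, nonstationary) policy $\pi$, discounting, and letting the horizon tend to infinity—using boundedness of $V_c$ and dominated convergence—gives $V_c^\pi(\nu_0)\ge\langle\nu_0,V_c\rangle$, hence $V_c\le V^\star_c$ pointwise. For the upper bound, let $f^\star$ be a measurable selector with $c(x,f^\star(x))+\gamma\mathsf{P}V_c(x,f^\star(x))=(TV_c)(x)=V_c(x)$; then $V_c$ is the unique fixed point of the $\gamma$-contraction $u\mapsto c(\cdot,f^\star(\cdot))+\gamma\mathsf{P}u(\cdot,f^\star(\cdot))$, so $V_c^{f^\star}=V_c$ and therefore $V^\star_c\le V_c$. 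Combining the two, $V^\star_c=V_c$, which proves claims~1 and~2 and shows that the stationary deterministic policy $f^\star$ is $\gamma$-discount $\nu_0$-optimal, i.e.\ claim~3. The main obstacle I anticipate is the continuity/measurable-selection bookkeeping—checking that the weak continuity in \ref{AA3} is exactly what Berge's theorem and the Kuratowski--Ryll-Nardzewski selector require—together with the horizon-truncation step in the lower bound, which must be carried out for general history-dependent policies rather than just stationary Markov ones; both are standard (cf.\ the cited references and Appendix~\ref{detailsLP}) but are where the real work lies.
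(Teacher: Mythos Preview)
The paper does not supply its own proof of Theorem~\ref{theorem3.1}; the result is simply quoted from \cite{ref:Dufour-13} and \cite{ref:Lerma:adaptive-01}. Your plan---Bellman operator, $\gamma$-contraction on $\mathcal{C}_b(\xxx)$, Berge plus measurable selection, then the two-sided verification $V_c=V^\star_c$---is exactly the classical route those references take, so there is nothing to contrast at the level of strategy.

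One concrete caution on claim~1. With Assumption~\ref{assumption1}\ref{AA3} written as in the paper, the bound on $\mathsf{P}u$ involves the Lipschitz \emph{semi-norm} $|u|_{\textup{L}}$, so the recursion you get is
\[
    |Tu|_{\textup{L}}\le L_c+\gamma L_{\mathsf{P}}\,|u|_{\textup{L}},
\]
which closes only under the extra hypothesis $\gamma L_{\mathsf{P}}<1$ and then gives $|V^\star_c|_{\textup{L}}\le L_c/(1-\gamma L_{\mathsf{P}})$, not the displayed constant $L_c+\tfrac{\gamma}{1-\gamma}\|c\|_\infty L_{\mathsf{P}}$. The latter constant is what one obtains if the Lipschitz assumption on $\mathsf{P}$ is stated with $\|u\|_\infty$ in place of $|u|_{\textup{L}}$ (combined with $\|V^\star_c\|_\infty\le\|c\|_\infty/(1-\gamma)$), which is the form used in \cite{ref:Dufour-13}. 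So your invariant-set argument as written will not produce the specific constant in the theorem from \ref{AA3} as stated; this is a mismatch already present in the paper's own statements rather than a flaw in your overall approach, but be aware of it when you try to carry out the Lipschitz step in detail.
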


Next, we characterize the set of occupation measures in terms of linear constraint satisfaction.
\begin{theorem}[{\cite[Theorem~6.3.7]{ref:Hernandez-96}}]\label{HL}
    Consider the convex set of measures, $\mathfrak{F}\triangleq\{\mu\in\mathcal{M}(\xxx\times\aaa)_+:\,\,T_{\gamma}\mu=\nu_0\}$, where $T_{\gamma}:\mathcal{M}(\xxx\times\aaa)\rightarrow\mathcal{M}(\xxx)$ is a linear and weakly continuous operator given by
    \begin{equation*}
        (T_{\gamma}\mu)(B)\triangleq\mu(B\times\aaa)-\gamma\int_{\xxx\times\aaa}\mathsf{P}(B|x,a)\mu(\,\drv(x,a)),\quad\mbox{for all}\,\,B\in\mathcal{B}(\xxx).
    \end{equation*}
    Then, $\mathfrak{F}=\{\mu_{\nu_0}^\pi:\,\,\pi\in\Pi_0\}.$
    Moreover, $\inner{\mu_{\nu_0}^{\pi}}{c}{}=V_c^\pi(\nu_0)$, for every $\pi$.
\end{theorem}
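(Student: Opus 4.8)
The plan is to prove the three assertions folded into the statement in order of increasing difficulty: that $T_\gamma$ is well defined, linear and weakly continuous; that $\inner{\mu_{\nu_0}^\pi}{c}{}=V_c^\pi(\nu_0)$ for every $\pi$; and then the set equality $\mathfrak{F}=\{\mu_{\nu_0}^\pi:\pi\in\Pi_0\}$. For the operator properties, linearity is read off the defining formula and $\norma{T_\gamma\mu}{\textup{TV}}\le(1+\gamma)\norma{\mu}{\textup{TV}}$ gives well-definedness into $\mathcal{M}(\xxx)$; for weak continuity I would pair $T_\gamma\mu$ against a test function $u\in\operatorname{Lip}(\xxx)$ and note $\inner{T_\gamma\mu}{u}{}=\inner{\mu}{u-\gamma\mathsf{P}u}{}=\inner{\mu}{T_\gamma^*u}{}$, where $T_\gamma^*u$ is bounded and continuous by Assumption~\ref{assumption1}\ref{AA3}, so $\mu\mapsto\inner{\mu}{T_\gamma^*u}{}$ is weakly continuous, which is exactly the claim. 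For the value identity I would expand $\mu_{\nu_0}^\pi$ by its definition and interchange sum and integral by Tonelli (legitimate since $c$ is bounded and $\gamma<1$): $\inner{\mu_{\nu_0}^\pi}{c}{}=\sum_{t\ge 0}\gamma^t\,\mathds{E}_{\nu_0}^{\pi}[c(x_t,a_t)]=\mathds{E}_{\nu_0}^{\pi}\big[\sum_{t\ge 0}\gamma^t c(x_t,a_t)\big]=V_c^\pi(\nu_0)$. The inclusion $\{\mu_{\nu_0}^\pi:\pi\in\Pi_0\}\subseteq\mathfrak{F}$ is then a short computation: writing $\mu_t(\cdot)=\mathds{P}_{\nu_0}^{\pi}[x_t\in\cdot]$ for the time-$t$ state marginal, the tower property gives $\int_{\xxx\times\aaa}\mathsf{P}(B|x,a)\,\mu_{\nu_0}^\pi(\drv(x,a))=\sum_{t\ge 0}\gamma^t\mu_{t+1}(B)$, so $(T_\gamma\mu_{\nu_0}^\pi)(B)=\sum_{t\ge 0}\gamma^t\mu_t(B)-\sum_{t\ge 0}\gamma^{t+1}\mu_{t+1}(B)=\mu_0(B)=\nu_0(B)$ by telescoping.

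The substantive direction is $\mathfrak{F}\subseteq\{\mu_{\nu_0}^\pi:\pi\in\Pi_0\}$. Given $\mu\in\mathfrak{F}$, taking $B=\xxx$ in $T_\gamma\mu=\nu_0$ shows $(1-\gamma)\mu(\xxx\times\aaa)=1$, hence $\mu$ is a finite measure of total mass $1/(1-\gamma)$. Let $\hat\mu(\cdot)\triangleq\mu(\cdot\times\aaa)$ be its state marginal. Since $\xxx$ and $\aaa$ are Borel spaces (Assumption~\ref{assumption1}\ref{AA1}), $\mu$ disintegrates as $\mu(\drv(x,a))=\pi(\drv a|x)\,\hat\mu(\drv x)$ for some stochastic kernel $\pi$ on $\aaa$ given $\xxx$, i.e.\ $\pi\in\Pi_0$. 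Substituting this into the flow constraint gives the fixed-point identity $\hat\mu=\nu_0+\gamma\,\hat\mu P^\pi$, where $P^\pi$ is the Markov operator on state measures, $(\hat\mu P^\pi)(B)=\int_\xxx\!\int_\aaa\mathsf{P}(B|x,a)\,\pi(\drv a|x)\hat\mu(\drv x)$. Iterating $n$ times, $\hat\mu=\sum_{k=0}^{n}\gamma^k\,\nu_0(P^\pi)^k+\gamma^{n+1}\hat\mu(P^\pi)^{n+1}$; since $P^\pi$ preserves total mass, the remainder has total variation $\gamma^{n+1}/(1-\gamma)\to 0$, so $\hat\mu=\sum_{k\ge 0}\gamma^k\,\nu_0(P^\pi)^k=\sum_{k\ge 0}\gamma^k\mu_k$ with $\mu_k$ the time-$k$ state marginal under $\pi$ started from $\nu_0$. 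Re-attaching the kernel gives $\mu(\drv(x,a))=\pi(\drv a|x)\sum_{k\ge 0}\gamma^k\mu_k(\drv x)$, which is precisely the disintegration of $\mu_{\nu_0}^\pi$ because under $\pi$ the pair $(x_k,a_k)$ has law $\mu_k(\drv x)\,\pi(\drv a|x)$; hence $\mu=\mu_{\nu_0}^\pi$, as required.

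I expect the disintegration step to be the main obstacle: it is exactly where the Borel-space hypothesis enters (via existence of a regular conditional probability), and one must verify that the resulting family $\{\pi(\cdot|x)\}_{x\in\xxx}$ is a genuine measurable stochastic kernel so that $\pi\in\Pi_0$. Everything else — the Tonelli interchange, the tower property, the telescoping identity, and the total-variation convergence of the Neumann series — is routine bookkeeping once $\mu$ has been shown to be a finite measure.
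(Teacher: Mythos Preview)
Your argument is correct and essentially reproduces the standard textbook proof. Note, however, that the paper does not supply its own proof of this statement: Theorem~\ref{HL} is quoted verbatim from \cite[Theorem~6.3.7]{ref:Hernandez-96} and used as a black box, so there is no ``paper's proof'' to compare against beyond the reference itself.

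That said, your write-up matches the approach in Hern\'andez-Lerma--Lasserre closely: the easy inclusion via telescoping of the state marginals, and the hard inclusion via disintegration $\mu(\drv(x,a))=\pi(\drv a|x)\,\hat\mu(\drv x)$ followed by the Neumann-series identification $\hat\mu=\sum_{k\ge 0}\gamma^k\nu_0(P^\pi)^k$. Your identification of the disintegration as the crux is exactly right, and your justification (existence of regular conditional probabilities on Borel spaces, guaranteed here by Assumption~\ref{assumption1}\ref{AA1}) is the correct one. One minor remark: the disintegration kernel $\pi(\cdot|x)$ is determined only $\hat\mu$-almost everywhere, but since you show that the state marginal of $\mu_{\nu_0}^\pi$ coincides with $\hat\mu$, any measurable version of $\pi$ yields the same occupation measure, so the ambiguity is harmless.
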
 

A direct consequence of Theorem~\ref{HL} is that
\begin{equation*}
    V^\star_c(\nu_0)=\min_{\pi}\inner{\mu_{\nu_0}^{\pi}}{c}{}=\min_{\pi\in\Pi_0}\inner{\mu_{\nu_0}^{\pi}}{c}{}=\inf_{\mu\in\mathfrak{F}}\inner{\mu}{c}{}.
\end{equation*}
Therefore the MDP problem~(\ref{MDP}) can be stated equivalently as an infinite-dimensional LP over measures
\begin{align*}
    \mathcal{J}_{c}(\nu_0)\triangleq\inf_{\mu\in\mathcal{M}(\xxx\times\aaa)_+}\{\inner{\mu}{c}{}:\,\,T_{\gamma}\mu=\nu_0\}.\tag{$\primal{\cost}$}
\end{align*}
In particular the infimum in~(\ref{primal}) is attained and $\pi^\star$ is optimal for the OCP~(\ref{MDP}) if and only if $\mu_{\nu_0}^{\pi^\star}$ is optimal for the primal LP~(\ref{primal}).

Consider the dual pair of vector spaces $(\mathcal{M}(\xxx\times\aaa),\operatorname{Lip}(\xxx\times\aaa))$ and $(\mathcal{M}(\xxx),\operatorname{Lip}(\xxx))$. Then the adjoint linear operator $T_{\gamma}^*:\operatorname{Lip}(\xxx)\rightarrow\operatorname{Lip}(\xxx\times\aaa)$ of $T_{\gamma}$ is given by
\begin{equation*}
    (T_{\gamma}^*u)(x,a)\triangleq u(x)-\gamma\int_{\xxx}u(y)\mathsf{P}(\drv y|x,a).
\end{equation*}
Indeed, $T_{\gamma}^*$ is well-defined  by Assumption~\ref{AA3}. Moreover, a direct computation shows that (see~\cite[Pg.~139]{Hernandez-Lerma:1996})
\begin{equation*}
    \inner{\mu}{T_{\gamma}^*u}{}=\inner{T_{\gamma}\mu}{u}{},\quad \mbox{for all}\,\,\, \mu\in\mathcal{M}(\xxx\times\aaa),\,\, u\in\operatorname{Lip}(\xxx). 
\end{equation*}

In addition, the dual convex cone of $\mathcal{M}(\xxx\times\aaa)_+$ is the set $\operatorname{Lip}(\xxx\times\aaa)_+$ of nonnegative bounded and Lipschitz continuous functions on $\xxx\times\aaa$. 
This is because,
\begin{eqnarray*}
    \mathcal{M}(\xxx\times\aaa)_+&\triangleq&\left\{v\in\textup{Lip}(\xxx\times\aaa)\mid\inner{\mu}{v}{}\geq 0,\,\,\,\forall\,\,\,\mu\in\mathcal{M}(\xxx\times\aaa)_+\right\}\\
    &=&\{v\in\textup{Lip}(\xxx\times\aaa)\mid v\geq 0\}.
\end{eqnarray*}

The dual LP of~(\ref{primal}) is given by
\begin{equation*}
    \mathcal{J}_{c}^*(\nu_0)\triangleq\sup_{u\in\operatorname{Lip}(\xxx)}\{\inner{\nu_0}{u}{}:\,\,c-T_{\gamma}^*u\geq 0\,\,\text{on}\,\, \xxx\times\aaa\}.\tag{$\dual{c}$}
\end{equation*}
\begin{theorem}[Strong duality]\label{strongduality}
    Under Assumption~\ref{assumption1} on the control model $\mathcal{M}_c$, the dual LP~(\ref{dual}) is solvable, i.e., the supremum is attained, and strong duality holds. 
    That is, $\mathcal{J}_{c}(\nu_0)=\mathcal{J}_{c}^*(\nu_0)=V_c^\star(\nu_0)$.
In particular the value function $V^\star_c$ is an optimal solution for the dual LP~(\ref{dual}).
\end{theorem}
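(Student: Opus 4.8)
The plan is to by-pass any abstract infinite-dimensional LP duality machinery and instead exhibit the optimal value function $V_c^\star$ as an explicit optimal solution of the dual~(\ref{dual}), from which both the absence of a duality gap and dual attainment follow simultaneously. The first ingredient is weak duality, which is immediate: if $\mu\in\mathcal{M}(\xxx\times\aaa)_+$ is feasible for~(\ref{primal}) and $u\in\operatorname{Lip}(\xxx)$ is feasible for~(\ref{dual}), then $\inner{\mu}{c}{}\ge\inner{\mu}{T_{\gamma}^*u}{}=\inner{T_{\gamma}\mu}{u}{}=\inner{\nu_0}{u}{}$, using nonnegativity of $\mu$, the constraint $c-T_{\gamma}^*u\ge 0$, and the adjoint identity $\inner{\mu}{T_{\gamma}^*u}{}=\inner{T_{\gamma}\mu}{u}{}$; taking the infimum over primal-feasible $\mu$ and the supremum over dual-feasible $u$ gives $\mathcal{J}_{c}(\nu_0)\ge\mathcal{J}_{c}^*(\nu_0)$.

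Next I would record that the primal value coincides with the MDP value and, crucially, with a genuine integral of $V_c^\star$ against $\nu_0$. By Theorem~\ref{HL}, $\mathcal{J}_{c}(\nu_0)=\inf_{\mu\in\mathfrak{F}}\inner{\mu}{c}{}=V_c^\star(\nu_0)$, with the infimum attained at the occupation measure of an optimal policy. Since Theorem~\ref{theorem3.1} provides a stationary deterministic policy that is $\nu_0$-optimal and, being stationary, optimal from every initial state, one obtains $V_c^\star(\nu_0)=\int_{\xxx}V_c^\star(x)\,\nu_0(\mathrm{d}x)=\inner{\nu_0}{V_c^\star}{}$, so the dual objective evaluated at $V_c^\star$ is exactly the primal optimal value.

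The crux is to check that $V_c^\star$ is itself a feasible point of~(\ref{dual}). By Theorem~\ref{theorem3.1}, $V_c^\star\in\operatorname{Lip}(\xxx)$, so it lives in the correct decision space, and it satisfies the Bellman optimality equation $V_c^\star(x)=\min_{a\in\aaa}\{c(x,a)+\gamma\mathsf{P}V_c^\star(x,a)\}$ for every $x\in\xxx$. The ``$\le$'' direction of this equation reads $V_c^\star(x)\le c(x,a)+\gamma\mathsf{P}V_c^\star(x,a)$ for all $(x,a)\in\xxx\times\aaa$, i.e. $c-T_{\gamma}^*V_c^\star\ge 0$ on $\xxx\times\aaa$; Assumption~\ref{assumption1}~\ref{AA3} guarantees $T_{\gamma}^*V_c^\star\in\operatorname{Lip}(\xxx\times\aaa)$, hence $V_c^\star$ is dual feasible. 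Dual feasibility then yields $\mathcal{J}_{c}^*(\nu_0)\ge\inner{\nu_0}{V_c^\star}{}=V_c^\star(\nu_0)=\mathcal{J}_{c}(\nu_0)$, which combined with weak duality forces $\mathcal{J}_{c}(\nu_0)=\mathcal{J}_{c}^*(\nu_0)=V_c^\star(\nu_0)$ and shows the supremum in~(\ref{dual}) is attained, precisely at $V_c^\star$.

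The main obstacle is already resolved upstream: everything hinges on the existence of a \emph{bounded Lipschitz} solution of the Bellman optimality equation, which is exactly what the continuity–compactness and Lipschitz hypotheses of Assumption~\ref{assumption1} deliver through Theorem~\ref{theorem3.1}; without a solution living in $\operatorname{Lip}(\xxx)$ one could not even write down a feasible dual candidate. The only other point requiring care is the identity $V_c^\star(\nu_0)=\inner{\nu_0}{V_c^\star}{}$, where one invokes that an optimal stationary policy is optimal from every initial state (part (3) of Theorem~\ref{theorem3.1}); otherwise the dual objective $\inner{\nu_0}{u}{}$ would not obviously match the primal value $V_c^\star(\nu_0)$.
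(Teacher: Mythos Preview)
Your argument is correct and takes a genuinely different route from the paper. The paper's proof is essentially a citation: it invokes \cite[Theorem~6.3.8]{ref:Hernandez-96} for the equalities $V_c^\star(\nu_0)=\mathcal{J}_c(\nu_0)=\mathcal{J}_c^*(\nu_0)$ and for attainment of the dual at $V_c^\star$, and then appeals to Theorem~\ref{theorem3.1}(1) only to ensure that $V_c^\star$ actually lies in $\operatorname{Lip}(\xxx)$, the decision space over which~(\ref{dual}) is posed. You instead give a self-contained argument from the ingredients already stated in the paper: weak duality by the adjoint identity, dual feasibility of $V_c^\star$ directly from the Bellman inequality, and Theorem~\ref{HL} for $\mathcal{J}_c(\nu_0)=V_c^\star(\nu_0)$. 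What the paper's approach buys is brevity at the cost of an external reference; what your approach buys is transparency as to exactly where Assumption~\ref{assumption1} enters (namely, to place $V_c^\star$ in $\operatorname{Lip}(\xxx)$ so that a dual candidate even exists).

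One small point to tighten: the sentence ``being stationary, optimal from every initial state'' is not literally true as a general implication --- a stationary policy that happens to be $\nu_0$-optimal need not be $\delta_x$-optimal for $x$ outside the support of $\nu_0$. What you actually need (and what the Bellman equation in Theorem~\ref{theorem3.1}(2) provides via a measurable selector of the minimizing action) is the existence of a single stationary deterministic policy $\pi^\star$ with $V_c^{\pi^\star}(x)=V_c^\star(x)$ for every $x\in\xxx$; integrating this identity against $\nu_0$ yields $V_c^\star(\nu_0)=\inner{\nu_0}{V_c^\star}{}$. Alternatively, the inequality $V_c^\star(\nu_0)\ge\inner{\nu_0}{V_c^\star}{}$ is immediate from $V_c^\pi(\nu_0)=\int V_c^\pi(x)\,\nu_0(\drv x)\ge\int V_c^\star(x)\,\nu_0(\drv x)$ for every $\pi$, and the reverse inequality already follows from your dual-feasibility-plus-weak-duality step, so the uniformly optimal policy is not even strictly necessary.
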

\begin{proof}
    By virtue of~\cite[Theorem~6.3.8]{ref:Hernandez-96} we have that
    \begin{equation*}   
        V_{c}^\star(\nu_0)=\mathcal{J}_{c}(\nu_0)=\mathcal{J}^\star_{c}(\nu_0)
    \end{equation*}
    and moreover the supremum definiting $\mathcal{J}^\star_{c}(\nu_0)$ is attained by the optimal value function $V^\star_c:\xxx\rightarrow\ar$.
    Therefore, the result follows by 1) of Theorem~\ref{theorem3.1}.
\end{proof}
%

\subsection{Comparison to the LP formulations for IRL from the literature}\label{app:recover}
In this section, we will demonstrate that our formulation, when applied to finite tabular MDPs and a stationary Markov expert policy $\expert$, simplifies to the inverse feasibility set considered in recent studies~\cite{Metelli:2021,Linder:2022,Metelli:2023}.
The formulation presented in these works extends the LP formulation previously explored in~\cite{Ng:2000,Komanduru:2019,Komanduru:2021,Dexter:2021}, which specifically addressed deterministic expert policies of the form $\expert(s)\equiv a_1$. 
By highlighting this connection, we establish a link between our approach and the existing body of literature on LP formulations for IRL, while also accounting for continuous state and action spaces and more general expert policies.

Let $\xxx$ and $\aaa$ be finite sets with cardinality $|\xxx|$ and $|\aaa|$, respectively. 
Then, Assumption~\ref{assumption1} is trivially satisfied and $\textup{Lip}(\xxx\times\aaa)=\ar^{|\xxx||\aaa|}$. 

By Theorem~\ref{IP} we have that a cost function $c\in\ar^{|\xxx||\aaa|}$ is inverse feasible with respect to $\expert$, i.e., $c\in\mathcal{C}(\expert)$, if and only if there exists $u\in\ar^{|\xxx|}$ such that 

\begin{align}\label{eq:discrete}
    & \left\{ \begin{array}{ll}
        \sum_{x,a}\mu_{\nu_0}^{\pi_{\textup{E}}}(x,a) (c-T_{\gamma}^*u)(x,a)=0,&  \\
	    (c-T_{\gamma}^*u)(x,a)\ge  0 ,\,\,\mbox{for all}\,\,(x,a)\in\xxx\times \aaa.&
    \end{array} \right.
  \end{align}
If $\expert\in\Pi_0$ is a stationary Markov policy, then $\mu_{\nu_0}^{\expert}(x,a)=\sum_{a'}\mu_{\nu_0}^{\expert}(x,a')\expert(a|x)$ and $\sum_{a'}\mu_{\nu_0}^{\expert}(x,a')>0$ ~\cite[Thm. 6.9.1]{Puterman:1994}. 
Therefore, for any state-action pair $(x,a)\in\xxx\times\aaa$, it holds that $\ocop(x,a)=0\Leftrightarrow\pi(a|x)=0$. 
We then get that~(\ref{eq:discrete}) is equivalent to

\begin{align}\label{eq:discrete2}
     & \left\{ \begin{array}{ll}
         (c-T_{\gamma}^*u)(x,a)=0,& \mbox{if}\,\,\, \expert(a|x)>0  \\
         (c-T_{\gamma}^*u)(x,a)\ge  0,& \mbox{if}\,\,\, \expert(a|x)=0.
     \end{array} \right.
\end{align}
So we end up that a cost function $c\in\ar^{|\xxx||\aaa|}$ is inverse feasible if and only if there exists $u\in\ar^{|\xxx|}$ and $A_{\gamma}\in\ar^{|\xxx||\aaa|}_+$ such that for all $(x,a)\in\xxx\times\aaa$,
\begin{equation*}
    c(x,a)-u(x)+\gamma\sum_y\mathsf{P}(y|x,a)u(y)=A_{\gamma}(x,a)\mathds{1}_{\{\expert(a|x)=0\}}.
\end{equation*}
So we have recovered~\cite[Lem.~3.2]{Metelli:2021}, which forms the basis for the analysis and algorithms in~\cite{Metelli:2021,Linder:2022,Metelli:2023}.

Next, note that when $\expert(x)=a_1$, for all $x\in\xxx$ and $c(x,a)=c(x)$, for all $(x,a)\in\xxx\times\aaa$, then~(\ref{eq:discrete2}) is equivalent to
\begin{align}\label{eq:discrete3}
    & \left\{ \begin{array}{ll}
        c(x)-u(x)=-\gamma\sum_{y}\mathsf{P}(y|x,a_1)u(y),& \mbox{for all}\,\,\, x\in\xxx  \\
        c(x)-u(x)\geq-\gamma\sum_{y}\mathsf{P}(y|x,a)u(y),& \mbox{for all}\,\,\,x\in\xxx,\,\,\,a\in\aaa\backslash\{a_1\}.
    \end{array} \right.
\end{align}
Therefore, a cost function $c\in\ar^{|\xxx||\aaa|}$ is inverse feasible if and only if there exists $u\in\ar^{|\xxx|}$ such that 
\begin{align}\label{eq:discrete4}
    & \left\{ \begin{array}{ll}
        c(x)-u(x)=-\gamma\sum_{y}\mathsf{P}(y|x,a_1)u(y),& \mbox{for all}\,\,\, x\in\xxx  \\
        \sum_{y}\mathsf{P}(y|x,a_1)u(y)\le\sum_{y}\mathsf{P}(y|x,a)u(y),& \mbox{for all}\,\,\,x\in\xxx,\,\,\,a\in\aaa\backslash\{a_1\}.
    \end{array} \right.
\end{align}
By introducing the notation $\mathsf{P}_{a}\in\ar^{|\xxx||\xxx|}$ with $\mathsf{P}_{a}(x,y)=\mathsf{P}(y|x,a_1)$ and noting that the first linear system in~(\ref{eq:discrete4}) admits a unique solution $u=(\mathsf{I}_{|\xxx|}-\gamma\mathsf{P}_{a_1})^{-1}c$, we end up that a cost function $c\in\ar^{|\xxx||\aaa|}$ is inverse feasible if and only if
\begin{equation*}
    (\mathsf{P}_{a_1}-\mathsf{P}_{a})(\mathsf{I}_{|\xxx|}-\gamma\mathsf{P}_{a_1})^{-1}c\le0,\quad\mbox{for all}\,\,\,a\in\mathcal{A}\backslash\{a_1\}.
\end{equation*}
So we have recovered~\cite[Thm.~3]{Ng:2000} which forms the basis for the analysis and algorithms in~\cite{Ng:2000,Komanduru:2019,Komanduru:2021}.

\end{document}